\def\R{\mathbb{R}}
\newcommand{\Rmnum}[1]{\expandafter\@slowromancap\romannumeral #1@}
\newtheorem{thm}{Theorem}[section]
\newtheorem{lemma}[thm]{Lemma}
\newtheorem{remark}[thm]{Remark}
\newtheorem{theorem}[thm]{Theorem}
\newtheorem{corollary}[thm]{Corollary}
\begin{document}
\author{Jiawei Chu}
\address{School of Mathematics, South China University of Technology, Guangzhou 510640, China}
\email{201920127901@mail.scut.edu.cn}
\author{Hai-Yang Jin}
\address{School of Mathematics, South China University of Technology, Guangzhou 510640, China}
\email{mahyjin@scut.edu.cn}

\author{Tian Xiang$^*$}
\address{Institute for Mathematical Sciences, Renmin University of China, Bejing, 100872, China}
\email{txiang@ruc.edu.cn}
\thanks{$^*$ Corresponding author.}

\title[Convection effects in  tumor  angiogenesis model]{Global dynamics in a chemotaxis model describing tumor  angiogenesis with/without mitosis in any dimensions}

\begin{abstract}
In this work, we study the following  Neumann-initial boundary value problem for a three-component chemotaxis model describing tumor  angiogenesis:
\begin{equation*}\label{system}
    \begin{cases}
    u_t=\Delta u-\chi\nabla\cdot(u\nabla v)+\xi_1\nabla\cdot(u\nabla w)+u(a-\mu u^\theta), &x\in\Omega, t>0,\\
    v_t=d\Delta v+\xi_2\nabla\cdot(v\nabla w)+u-v, &x\in\Omega, t>0,\\
    0=\Delta w+u-\bar{u}, \  \   \int_\Omega w=0, \  \  \bar{u}:=\frac{1}{|\Omega|}\int_\Omega u,  & x\in\Omega, t>0,\\
    \frac{\partial u}{\partial\nu}=\frac{\partial v}{\partial\nu}=\frac{\partial w}{\partial\nu}=0, &x\in\partial\Omega, t>0,\\
    u(x,0)=u_0(x),~v(x,0)=v_0(x),&x\in\Omega,
    \end{cases}
\end{equation*}
in a bounded smooth but not necessarily convex domain $\Omega\subset\R^n(n\geq 2)$  with  model parameters $\xi_1,\xi_2, d, \theta>0,a, \chi, \mu \geq 0$. Based on subtle energy estimates, we first identify two positive constants $\xi_0$ and $\mu_0$ such that the above problem allows only  global   classical solutions  with qualitative bounds provided one of the following conditions holds:
\begin{equation*} (1) \ \xi_1\geq \xi_0\chi^2; \ \ \  \  (2)\   \theta=1, \ \mu\geq\max\left\{1, \ \chi^\frac{8+2n}{5+n}\right\}\mu_0\chi^\frac{2}{5+n}; \ \ \  \ (3)\  \theta>1, \mu>0.
\end{equation*}
 Then, due to the obtained qualitative bounds, upon deriving higher order gradient estimates, we show  exponential convergence of bounded solutions to the spatially homogeneous equilibrium    (i) for  $\mu$ large relative to $\chi^2+\xi_1^2$ if $\mu>0$, (ii) for $d$ large if  $a=\mu=0$ and (iii) for merely  $d>0$ if $\chi=a=\mu=0$. As a direct consequence of our findings, all solutions to the above system with $\chi=a=\mu=0$ are globally bounded and they converge to constant equilibrium, and therefore, no patterns can arise.
\end{abstract}

\subjclass[2000]{35A01, 35B40, 35B44, 35K57, 35Q92, 92C17}

\keywords{Chemotaxis, tumor  angiogenesis, convection, qualitative boundedness, global stability.}

\maketitle

\numberwithin{equation}{section}


\section{Introduction and statement of main results}

To describe the branching of capillary sprouts during angiogenesis, Orme \& Chaplain \cite{OC-IMA-1996} proposed the following reaction-advection-diffusion system
\begin{equation}\label{PKS}
\begin{cases}
u_t=d_1\Delta u-\chi \nabla \cdot(u\nabla v)+\xi_1\nabla \cdot( u \nabla w),\\
v_t=d_2\Delta v+\xi_2 \nabla \cdot(v\nabla w)+\alpha u-\beta v,\\
w_t=d_3\Delta w+\gamma u-\delta w,
\end{cases}
\end{equation}
with positive parameters $d_1,d_2,d_3,\chi,\xi_1, \xi_2,\beta,\delta,\alpha,\gamma$,
where $u,v$ and $w$ denote the density of endothelial cells (ECs), adhesive sites, and the matrix (including fibronectin, laminin, and collagen IV), respectively.  Different from the classical mathematical models of  tumor  angiogenesis with chemotaxis as the principle mechanism of cell motion (\cite{CS-IMA-1993,SL-JTB-1991}), the model \eqref{PKS} was proposed  based on the experimental observations that during angiogenesis process ECs secrete a  matrix  consisting of fibronectin, laminin and collagen IV \cite{PK-CROH-1989} and the movement of  ECs is  effected by the distribution of adhesive sites  on this matrix. More specifically, the following two processes are essentially incorporated in  \eqref{PKS} (see \cite{OC-IMA-1996,PK-CROH-1989}):
\begin{itemize}
\item ECs secret  matrix and adhesive sites;
\item The spreading of  matrix with the convection of ECs and adhesive sites with it.
\end{itemize}
That is,  the movement of   ECs is governed by a combination of random motility, chemotaxis and convection.

Before proceeding to  our motivation and main results, we first recall some most relevant results to the system \eqref{PKS} under homogeneous Neumann boundary conditions (IBVP) and nonnegative initial conditions.
\begin{itemize}
\item[(i)] $\xi_1=\xi_2=0$: In this case, the first two components of  \eqref{PKS} reduce to the well-known classical (minimal) Keller-Segel chemotaxis model:
\begin{equation}\label{KS}
\begin{cases}
u_t=d_1\Delta u-\chi \nabla \cdot(u\nabla v),\\
v_t=d_2\Delta v+\alpha u-\beta v,\\
\end{cases}
\end{equation}
whose solution behaviors have been extensively studied  in various perspectives in the past five decades including boundedness, blow-up, large time behavior and pattern formation. One can find more details from  survey articles \cite{Horstmann-D, BBTW-2015, AT21-AAM} and the references therein. More precisely,  the boundedness and blowup of solutions for \eqref{KS} have been established in two or higher dimensions \cite{Nagai-Funk, W-JMPA-2013,W-JDE-2010,horstmann2001blow} based on the following Lyapunov energy functional:
\begin{equation*}\label{F1}
\mathcal{E}_1(u,v)=d_1\int_\Omega u\ln u -\chi \int_\Omega uv+\frac{\beta\chi}{2\alpha}\int_\Omega v^2+\frac{\chi d_2}{2\alpha}\int_\Omega |\nabla v|^2.
\end{equation*}
\item[(ii)] $\xi_2=0$: This case means that the convection effect of matrix on the adhesive sites is neglected, and then
the system \eqref{PKS} reduces to  the following widely studied attraction-repulsion Keller-Segel (ARKS) model
\begin{equation}\label{ARKS}
\begin{cases}
u_t=d_1\Delta u-\chi \nabla \cdot(u\nabla v)+\xi_1\nabla \cdot( u \nabla w),\\
v_t=d_2\Delta v+\alpha u-\beta v,\\
w_t=d_3\Delta w+\gamma u-\delta w.
\end{cases}
\end{equation}
The ARKS model \eqref{ARKS} has been proposed to describe the aggregation of {\it Microglia} in Alzheimer's disease in   \cite{M-A-L-A} and to describe  quorum effect  in chemotaxis \cite{P-H-2002}. In one dimensional space, the existence of global boundedness of classical solution \cite{LW-JBD-2012,JW-M2AS-2015} and time-periodic patterns and steady states patterns   \cite{LSW-DCDSB-2013} were established. In high dimensional spaces ($n\geq 2$), it has been found  that the sign of  $\Theta:=d_2\xi_1\gamma -d_3\chi\alpha$ plays an important role in determining the solution behavior of \eqref{ARKS}. More precisely, if $\Theta\geq 0$ (i.e., repulsion dominates or cancel attraction), the 2D fully parabolic ARKS model \cite{J-JMAA-2015,LT-M2AS-2015} or higher D parabolic-elliptic-elliptic simplification  of the ARKS model \cite{TW-M3AS-2013}  admits only global bounded solutions. However, if $\Theta< 0$ (i.e., attraction prevails over repulsion), based on the availability of Lyapunov functional, a critical mass phenomenon has been found (see \cite{LL-NARWA-2016,Espejo2014,JW-JDE-2016} for more details). Recently, in the repulsion dominated case, i.e,  $\Theta\geq 0$, the global  stability of constant steady state has been studied in \cite{Lin-M3AS-2018,JW-preprint}.
\item[(iii)] $\xi_1,\xi_2>0$: Due to the strong coupling of chemotaxis and convection in a cascade-like manner, which increases the complexity of mathematical analysis, the Lyapunov functional as constructed for the system \eqref{ARKS} does not work anymore. To the best of our knowledge,  the existing results on the system \eqref{PKS} seem at a rather rudimentary stage: in one dimensional space, the global existence of classical solution was very recently established  in \cite{Tao-JMAA} based on  semigroup estimate technique.  Furthermore,  based on an appropriate energy functional,  the  1D bounded solution was shown to converge to constant steady state under some restrictions on the model parameters like $\xi_2$ is small \cite{JX-CMS-2021}. Very recently, for a parabolic-parabolic-elliptic simplified model in a bounded {\it convex} domain $\Omega \subset \R^n(1\leq n\leq 3)$, Tao \& Winkler \cite{TW-NA-2021} used a  Moser-type iteration to derive the global boundedness of classical solution for large $\xi_1$ without qualitative information.
\end{itemize}
In summary of the above related results,  some interesting questions naturally arise:
\begin{itemize}
\item[(Q1)] It follows from  \cite{TW-NA-2021}  that large repulsive convection prevents any blow-up phenomenon in $\leq 3$D  (lower dimensional) {\it convex} domains. Hence, it is nature to ask whether or not large repulsive convection can prevent blow-up of solution in any dimensional bounded  smooth but not necessarily convex domains.
\item[(Q2)] In one dimensional space, due to nice Sobolev embeddings,  a small $\xi_2(\in(0,1])$ -independent  upper bound of solution is available. This makes  the small   $\xi_2$-global stability toward constant equilibrium  possible \cite{JX-CMS-2021}. However, in higher dimensions,  solution bounds typically depend  on $\xi_2$ (actually with a complex relation containing $\xi_2$ and its inverse $\xi_2^{-1}$, cf. \eqref{v-infty-bdd} and \eqref{gradw-infty-bdd} for instance), and thus the method used in \cite{JX-CMS-2021} does not work anymore. Hence,  it is challenging to study long time dynamics of bounded solutions in higher dimensions.

\item[(Q3)] Although there may be no significant increase in the rate of ECs mitosis during the first stages of angiogenesis, the mitosis occurs after the first spouts have formed \cite{OC-IMA-1996}. Moreover,  cell division also plays an essential role when repairing and remodelling of large wounds \cite{PK-CROH-1989}. Thus,  it is   interesting and practically needed  to study the effect of mitosis  for the system \eqref{PKS}.
\end{itemize}
To study the impact of  convection  more deeply and to provide relatively complete answers for  the three questions above,  for simplicity and clarity, based on the assumption that matrix diffuses much faster than adhesive sites and endothelial cells, we shall use a quasi-stationary approximation procedure as in \cite{JL-TAMS-1992, W-JMAA-2011} ($\tilde{w}=w-\bar{w}$, and then the $w$-equation becomes $d_3^{-1}\tilde{w}_t=\Delta \tilde{w}+\gamma d_3^{-1}(u-\bar{u})-\delta d_3^{-1} \tilde{w}$; then assuming $\gamma$  has the same order as $d_3$ and $\delta$ has lower order than $d_3$, and finally,  sending $d_3\rightarrow \infty$ and dropping the tilde notation) to lead to the following version of parabolic-parabolic-elliptic problem:
\begin{equation}\label{system}
    \begin{cases}
    u_t=\Delta u-\chi\nabla\cdot(u\nabla v)+\xi_1\nabla\cdot(u\nabla w)+u(a-\mu u^\theta), &x\in\Omega, t>0,\\
    v_t=d\Delta v+\xi_2\nabla\cdot(v\nabla w)+u-v, &x\in\Omega, t>0,\\
    0=\Delta w+u-\bar{u}, \  \   \int_\Omega w=0, \  \  \bar{u}:=\frac{1}{|\Omega|}\int_\Omega u, & x\in\Omega, t>0,\\
    \frac{\partial u}{\partial\nu}=\frac{\partial v}{\partial\nu}=\frac{\partial w}{\partial\nu}=0, &x\in\partial\Omega, t>0,\\
    u(x,0)=u_0(x),~v(x,0)=v_0(x),&x\in\Omega,
    \end{cases}
\end{equation}
where $\Omega\subset\mathbb{R}^n$ is a bounded domain with smooth boundary. Here we keep the parameters $\chi,\xi_1,\xi_2$ as above and simplify other parameters in an obvious way for convenience. The kinetic term  $u(a-\mu u^\theta)$ with $a,\mu\geq 0, \theta>0$ is incorporated  to see the  effect of  ECs mitosis. We shall henceforth assume that
\begin{equation}\label{initial}
(u_0, v_0)\in C^0(\bar{\Omega})\times W^{1,\infty}~~\text{with}~~u_0\geq0, v_0\geq 0,~\text{and}~u_0\not\equiv0.
\end{equation}
Then our main findings on qualitative  boundedness and convergence are stated as follows.
\begin{theorem}[{\bf Qualitative boundedness}]\label{GB}
Let  $\Omega\subset\mathbb{R}^n(n\geq 2)$ be a bounded and smooth domain, the model parameters $\xi_1,\xi_2,d,\theta>0, a,\chi, \mu\geq0$, and,  let   the initial data $(u_0,v_0)$ satisfy the regularity  \eqref{initial}. Then  there exist positive constants $\xi_0$ and $\mu_0$ depending on $d, \xi_2,  n, u_0, v_0, \Omega$  such that the IBVP \eqref{system}  admits a unique  global classical solution
provided one of the following conditions holds:
\begin{itemize}
\item (1) $\xi_1\geq \xi_0\chi^2$; \ \  \  (2) $\theta=1, \ \mu\geq \max\left\{1, \ \chi^\frac{8+2n}{5+n}\right\}\mu_0\chi^\frac{2}{5+n}$; \ \  \ (3) $\theta>1, \mu>0$.
\end{itemize}
Moreover,  the global solution is qualitatively  bounded in the following way:
\begin{equation}\label{GB-1}
\|u(\cdot,t)\|_{L^\infty}+\|v(\cdot,t)\|_{W^{1,\infty}}+\|w(\cdot,t)\|_{W^{2,\infty}}\leq M, \quad \quad \quad  \forall t>0,
\end{equation}
where, up to a multiplier depending only  on $n,u_0, v_0$ and $\Omega$,   the positive constant $M$ is explicitly expressible in terms of the model parameters $d,\chi_1, \xi_1,\xi_2,  \mu$, see Lemmas \ref{b-v}, \ref{high-reg},  \ref{b-u} and \ref{ev}. In particular, the qualitative bounds for $\|v\|_{L^\infty}$ and $\|\nabla w\|_{L^\infty}$, crucial to derive large time behaviors of bounded solutions  for \eqref{system}, are bounded as follows:  \begin{equation}\label{v-infty-bdd}
\begin{split}
\|v(\cdot,t)\|_{L^\infty}&\leq K_1\begin{cases} \left(1+\frac{1}{\xi_2}\right)\left(1
+\xi_2+(\frac{1}{d})^{\frac{n}{2}}\xi_2^{1+\frac{n}{2}}\right), &\text{ if  }  \mu=0, \\[0.25cm]
\left(1+(\frac{1}{\mu})^\frac{1}{\theta}+\frac{1}{\xi_2}\right)\left(1
+(\frac{1}{\mu})^\frac{1}{\theta}\xi_2+(\frac{1}{d})^{\frac{n}{2}}[(\frac{1}{\mu})^\frac{1}{\theta}\xi_2]^{1+\frac{n}{2}}\right), &\text{ if  }  \mu>0,
\end{cases}\\
& := M_0,
\end{split}
\end{equation}
and
\begin{equation}\label{gradw-infty-bdd}
\|\nabla w(\cdot,t)\|_{L^\infty}\leq K_2\left(1+\left(1+d_\Omega M_0^{2(n+1)}\right)d^{-1}\chi^2M_0^{1-n}+M_1^c(n)\right)^\frac{1}{n+1},
\end{equation}
where $M_1^c(n)=M_1^c$ is defined by
\begin{equation}\label{m1c-def}
M_1^c(n)=\begin{cases}
\xi_1, \quad \quad \quad \quad \quad \quad  \quad  \text{if  } \mu=0, \ \xi_1\geq \xi_0\chi^2, \\[0.25cm]
M_\mu(1), \quad \quad \quad \quad  \quad    \text{if  }  \theta=1, \ \mu> \max\left\{1, \ \chi^\frac{8+2n}{5+n}\right\}\mu_0\chi^\frac{2}{5+n}, \\[0.25cm]
M_\mu(\theta)+\frac{(\theta-1)}{\mu^\frac{n+2}{\theta-1}}
\left[(1+\frac{1}{d^{n+2}})(1+M_0\xi_2)M_0\chi^2 \right]^\frac{n+1+\theta}{\theta-1}, \  \text{if  } \theta>1,   \mu>0,
\end{cases}
\end{equation}
the function $M_\mu$ and the symbol $d_\Omega=d 1_\Omega$ with $1_\Omega$ being the indicator whether $\Omega$ is non-convex  are defined by
\begin{equation}\label{d-omega}
M_\mu(\theta)=\left(1+\xi_1(\frac{1}{\mu})^\frac{1}{\theta}+(\frac{1}{\mu})^\frac{1}{\theta}\right)(\frac{1}{\mu})^\frac{n+1}{\theta}, \ \   d_\Omega=d 1_\Omega=\begin{cases}0, &\text{ if } \Omega \text{ is convex},\\
d, &\text{ if } \Omega \text{ is non-convex}.
\end{cases}
\end{equation}
\end{theorem}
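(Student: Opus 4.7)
The plan is to follow the standard four-step strategy for chemotaxis systems, adapted to handle the cascade coupling through $w$: (i) local well-posedness together with a blow-up criterion, (ii) an $L^p$-energy estimate for $u$, (iii) $L^\infty$-bounds on $v$ and $\nabla w$, and (iv) a final iteration to an $L^\infty$-bound on $u$. Local well-posedness on a maximal interval $[0,T_{\max})$ and the criterion that $T_{\max}<\infty$ forces $\limsup_{t\nearrow T_{\max}}\|u(\cdot,t)\|_{L^\infty}=\infty$ are standard consequences of Amann-type theory for quasilinear parabolic systems. Since elliptic regularity for the $w$-equation yields $\|w(\cdot,t)\|_{W^{2,p}}\leq C_p\|u(\cdot,t)\|_{L^p}$ for every $p\in(1,\infty)$, the task reduces to $L^p$- and eventually $L^\infty$-control of $u$ and $v$. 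A basic $L^1$-bound on $u$ comes from integrating the $u$-equation: when $\mu>0$ the dissipation $-\mu\int_\Omega u^{1+\theta}$ provides a uniform bound, whereas for $\mu=0$ one only gets $\|u(t)\|_{L^1}\leq e^{at}\|u_0\|_{L^1}$, which is nonetheless sufficient as input for the subsequent iteration.

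The core of the argument is an $L^p$-energy estimate for $u$. Testing the $u$-equation with $u^{p-1}$, integrating by parts, and using $-\Delta w=u-\bar u$ yield
\begin{equation*}
\frac{1}{p}\frac{d}{dt}\int_\Omega u^p+\frac{4(p-1)}{p^2}\int_\Omega |\nabla u^{p/2}|^2+\frac{\xi_1(p-1)}{p}\int_\Omega u^{p+1}+\mu\int_\Omega u^{p+\theta}= -\frac{\chi(p-1)}{p}\int_\Omega u^p\Delta v+\text{lower order},
\end{equation*}
so the repulsion $\xi_1\nabla\cdot(u\nabla w)$ transmutes into a superlinear dissipation of $L^{p+1}$-type. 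The destabilising chemotactic right-hand side is rewritten via the $v$-equation and split by Young's inequality into a piece absorbed by the diffusion and a residual bounded by a multiple of $\chi^2\int_\Omega u^{p+1}$ plus lower-order terms. In case (1), this residual is dominated by the repulsive dissipation as soon as $\xi_1\geq \xi_0\chi^2$; in case (2), the logistic term $-\mu\int_\Omega u^{p+1}$ plays the same role, with the precise threshold $\mu\geq\mu_0\chi^{(8+2n)/(5+n)}\chi^{2/(5+n)}$ emerging from a Gagliardo--Nirenberg interpolation balancing the $L^{p+1}$ residual against the available $L^q$-data; in case (3), the stronger dissipation $-\mu\int_\Omega u^{p+\theta}$ with $\theta>1$ beats the $L^{p+1}$ residual by Young's inequality for any $\mu>0$.

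The main obstacle is converting these $L^p$-bounds on $u$ into the quantitatively sharp $L^\infty$-bound \eqref{v-infty-bdd} on $v$, since the $v$-equation carries the cross-convection $\xi_2\nabla\cdot(v\nabla w)$ whose coefficient is controlled only through $u$. The plan is to test the $v$-equation with $v^{p-1}$ and exploit the sign-favourable identity $\int_\Omega v^{p-1}\nabla\cdot(v\nabla w)=\tfrac{p-1}{p}\int_\Omega v^p(\bar u-u)$, whose $-uv^p$ piece acts as extra dissipation absorbable against $\int_\Omega u^{p+1}$. A Moser--Alikakos iteration with the $d$- and $\xi_2$-dependencies tracked step by step then delivers \eqref{v-infty-bdd}; the inverse powers of $\xi_2$ reflect that convection is the only mechanism coupling $v$ to $w$ when $\xi_2$ is small. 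Once $v\in L^\infty$, elliptic Gagliardo--Nirenberg applied to $-\Delta w=u-\bar u$ together with the preceding bounds yields \eqref{gradw-infty-bdd}, where the indicator $d_\Omega$ encodes the boundary integral $\int_{\partial\Omega}\partial_\nu|\nabla w|^2$ arising when testing with $|\nabla w|^{2p-2}\nabla w$ and which is non-positive precisely when $\Omega$ is convex. A final Moser iteration on the $u$-equation with the now-controlled drifts $\nabla v$ and $\nabla w$ closes \eqref{GB-1}, and standard parabolic and elliptic Schauder-type estimates upgrade $v$ to $W^{1,\infty}$ and $w$ to $W^{2,\infty}$, in accordance with Lemmas \ref{b-v}, \ref{high-reg}, \ref{b-u} and \ref{ev}.
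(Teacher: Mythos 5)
Your high-level architecture is close to the paper's, and you correctly identify the two structural devices that make the problem tractable: the sign-favourable identity $\int_\Omega v^{p-1}\nabla\cdot(v\nabla w)=-\frac{p-1}{p}\int_\Omega v^p(u-\bar u)$ in the $v$-equation, and the convexity-dependent boundary integral behind $d_\Omega$. However, the central step --- closing the $L^p$-estimate for $u$ --- is asserted rather than carried out, and the ordering of your steps makes it unclosable as written. You place the $u$-energy estimate (step (ii)) before the $L^\infty$-bound on $v$ (step (iii)), whereas the argument only closes if one first establishes $\|v\|_{L^\infty}\le M_0$ using nothing but the $L^1$-bound of $u$ (this is exactly why Lemma \ref{b-v} needs only \eqref{b-uvw1}: the source is absorbed by the convection-induced dissipation via $p\int_\Omega uv^{p-1}\le \xi_2(p-1)\int_\Omega uv^p+m_1\xi_2^{1-p}$), and then runs a \emph{coupled} ODI for $y(t)=\int_\Omega u^k+\delta\int_\Omega|\nabla v|^{2k}$. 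The bound $\|v\|_{L^\infty}\le M_0$ enters through the interpolation $\int_\Omega|\nabla v|^{2(k+1)}\le(2k+\sqrt n)^2\|v\|_{L^\infty}^2\int_\Omega|\nabla v|^{2(k-1)}|D^2v|^2$ of Lemma \ref{abs-ineq}, which converts the Young residual $\frac{\chi^2(p-1)}{2}\int_\Omega u^p|\nabla v|^2$ of the chemotactic term into a quantity absorbed by the dissipation $kd\int_\Omega|\nabla v|^{2k-2}|D^2v|^2$ supplied by the $|\nabla v|^{2k}$-evolution; only after this absorption does a residual of the form $C(1+M_0\xi_2)M_0\chi^2\int_\Omega u^{k+1}$ emerge, to be dominated by $\xi_1\int_\Omega u^{k+1}$, $\mu\int_\Omega u^{k+1}$, or $\mu\int_\Omega u^{k+\theta}$ in the three cases. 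Your alternative route --- "rewritten via the $v$-equation" --- if it means substituting $\Delta v=\frac1d\left(v_t-\xi_2\nabla\cdot(v\nabla w)-u+v\right)$ into $-\frac{\chi(p-1)}{p}\int_\Omega u^p\Delta v$, produces the term $-\frac{\chi(p-1)}{pd}\int_\Omega u^pv_t$, which in this fully parabolic setting cannot be handled without maximal-regularity machinery you have not set up, and the $u^{p+1}$-contribution it generates carries a factor $\chi/d$ rather than the claimed $\chi^2$. As it stands the key inequality of your step (ii) is unproved.

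Two smaller inaccuracies. First, the boundary integral responsible for $d_\Omega$ is $\int_{\partial\Omega}|\nabla v|^{2k-2}\frac{\partial|\nabla v|^2}{\partial\nu}$ arising in the evolution of $\int_\Omega|\nabla v|^{2k}$, not a term from testing the $w$-equation: the bound \eqref{gradw-infty-bdd} on $\nabla w$ follows purely from the elliptic $W^{2,k}$-estimate and Sobolev embedding once $\|u\|_{L^{k}}$ is known with $k=n+1$, and $d_\Omega$ enters only because that $L^k$-bound inherits the $d_\Omega M_0^{2k}$ contribution from the $v$-estimates. Second, in case (2) the exponents $\chi^{\frac{8+2n}{5+n}}$ and $\chi^{\frac{2}{5+n}}$ do not come from a Gagliardo--Nirenberg balance; they arise because $M_0$ itself depends on $\mu$ (through $m_1\sim\mu^{-1/\theta}$), so the absorption condition $\mu\gtrsim(1+M_0\xi_2)M_0\chi^2$ is implicit in $\mu$, and resolving it separately on $\{\mu<1\}$ (where $(1+M_0\xi_2)M_0\le O(1)\mu^{-(4+n)}$) and on $\{\mu\ge1\}$ yields precisely the stated threshold.
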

\begin{remark}When $\Omega\subset \mathbb{R}^1$ is an open interval, using simpler arguments than those of \cite{JX-CMS-2021,Tao-JMAA}, one can easily obtain global boundedness without any parameter restrictions. For fixed parameters and initial data, we  also mention that the infimums of $\xi_0$ and $\mu_0$ depend indeed on $\frac{n}{2}$ instead of $n$, see Lemma \ref{b-u}; this is comparable to the widely known $L^{\frac{n}{2}+}$-criterion \cite{BBTW-2015,Xiangjde}.   Moreover, we note that the upper bounds for $\|v\|_{L^\infty}$ and $\|\nabla w\|_{L^\infty}$, cf. \eqref{v-infty-bdd} and \eqref{gradw-infty-bdd}, are  bounded for large $d$ and, in particular, it can be non-increasing in $d$ in the case that $\Omega$ is convex. This makes the study of global stability  possible in the case of $a=\mu=0$.
\end{remark}
Thanks in particular to the qualitative bounds for $\|v\|_{L^\infty}$ and $\|\nabla w\|_{L^\infty}$ in \eqref{v-infty-bdd} and \eqref{gradw-infty-bdd}, upon successfully deriving higher order gradient estimates, we are able to prove  convergence and exponential convergence rate of bounded solutions in the following ways.
\begin{theorem}[{\bf Global stability}]\label{LT} The global bounded classical solution $(u,v,w)$ obtained  from  Theorem \ref{GB} enjoys the following convergence properties:
\begin{itemize}
\item[(C1)] In the case of  $a=\mu=0$, there exists $d_0(\chi)\geq 0$ with $d_0(0)=0$ depending on $n,u_0, v_0, \xi_1,\xi_2,\chi$ such that
whenever $d\geq d_0(\chi)$ with $\chi>0$, the solution $(u,v,w)$ converges uniformly to $(\bar{u}_0,\bar{u}_0,0)$:
\begin{equation}\label{conv-main-1}
\|u(\cdot,t)-\bar{u}_0\|_{L^\infty}+\|v(\cdot,t)-\bar{u}_0\|_{L^\infty}+\|w(\cdot,t)\|_{W^{2,\infty}}\to 0 \ \ \mathrm{as}\ \  t\to\infty.
\end{equation}
If, in addition,  $d>d_0(\chi)$, the above convergence is exponential: for some  $K_3, \zeta>0$,
 \begin{equation}\label{conv-main-2}
\|u(\cdot,t)-\bar{u}_0\|_{L^\infty}+
\|v(\cdot,t)-\bar{u}_0\|_{L^\infty}+\|w(\cdot,t)\|_{W^{2,\infty}}\leq K_3 e^{-\zeta t}, \ \ \forall t>0.
\end{equation}
    \item[(C2)] In the case of  $\mu>0$, let $C_p$ denote the Poincar\'{e} constant defined by
   \begin{equation}\label{poincare-cont}
    C_p^{-2}=\inf\left\{\int_\Omega |\nabla w|^2: \int_\Omega w=0, \int_\Omega w^2=1\right\}
\end{equation}
and let
 \begin{equation}\label{lambda-def}
\Lambda(z)=\frac{d\chi^2  +d^2C^2_p\xi_1^2
+C^2_p\xi_2^2z}{2d^2a^\frac{\theta-2}{\theta}}.
\end{equation}
   Then,  whenever
    \begin{equation}\label{mu-large-lt-intro}
\mu>\max\left\{\Lambda^\frac{\theta}{2}\left(\sup_{\mu>1}M_0^2\right),\  \Lambda^\frac{\theta}{6+n}\left(\sup_{0<\mu\leq 1}\left(M_0^2\mu^\frac{4+n}{\theta}\right)\right)\right\},
\end{equation}
 the solution $(u,v,w)$ converges exponentially  to  $((\frac{a}{\mu})^\frac{1}{\theta},(\frac{a}{\mu})^\frac{1}{\theta},0)$: for some  $K_4, \eta>0$,
 \begin{equation}\label{cv-uvw-main3}
\|u(\cdot,t)-(\frac{a}{\mu})^\frac{1}{\theta}\|_{L^\infty}+\|v(\cdot,t)-(\frac{a}{\mu})^\frac{1}{\theta}\|_{L^\infty}+\|w(\cdot,t)\|_{W^{2,\infty}}\leq K_4 e^{-\eta t}, \ \ \ \forall t>0.
\end{equation}
\end{itemize}
\end{theorem}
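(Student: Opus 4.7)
The plan is to combine the a priori bounds of Theorem \ref{GB} with a Lyapunov energy functional to obtain exponential $L^2$-decay, and then upgrade to $L^\infty$-decay by Gagliardo--Nirenberg interpolation against uniform higher-order parabolic bounds. I would handle (C2) first, as it is the more delicate case, and then adapt the argument to (C1).

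For (C2), set $U=u-u^*$, $V=v-v^*$ with $u^*=v^*=(a/\mu)^{1/\theta}$, and consider the quadratic functional
\[
\mathcal{F}(t)=\frac{1}{2}\int_\Omega U^2\,dx+\frac{\alpha}{2}\int_\Omega V^2\,dx,
\]
with a parameter $\alpha>0$ to be tuned. Testing the $u$- and $v$-equations by $U$ and $\alpha V$ and integrating by parts yields a decomposition of the form
\[
\frac{d}{dt}\mathcal{F}=-\int_\Omega|\nabla u|^2-\alpha d\int_\Omega|\nabla v|^2-\mu\int_\Omega U\bigl(u^{\theta+1}-u^{*\theta+1}\bigr)\,dx+\mathcal{R},
\]
where $\mathcal{R}$ bundles the three cross terms $\chi\int u\,\nabla u\cdot\nabla v$, $-\xi_1\int u\,\nabla u\cdot\nabla w$, $-\alpha\xi_2\int v\,\nabla v\cdot\nabla w$, together with the reactive residual $\alpha\int UV-\alpha\int V^2$ from the $v$-equation. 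The logistic integral is non-negative and, by convexity of $s\mapsto s^{\theta+1}$ combined with the uniform $L^\infty$-bound on $u$ from Theorem \ref{GB}, bounds below a coercive quantity of order $\mu\int U^2$. To control $\mathcal{R}$, I rely on Young's inequality and two key ingredients from Theorem \ref{GB}: the $L^\infty$-bounds \eqref{v-infty-bdd}--\eqref{gradw-infty-bdd}, and the elliptic identity obtained by testing $-\Delta w=U-\overline{U}$ with $w$, giving
\[
\|\nabla w\|_{L^2}\le C_p\|U\|_{L^2}.
\]
This converts the two $\nabla w$ cross terms into multiples of $\int U^2$. Choosing $\alpha$ of order $\chi^2/d$ and the Young parameters appropriately absorbs the $|\nabla u|^2$ and $|\nabla v|^2$ contributions, while the remaining coefficient of $\int U^2$ takes the combination encoded in $\Lambda$ in \eqref{lambda-def}; the two alternatives in the maximum of \eqref{mu-large-lt-intro} correspond to bounding $\|v\|_\infty^2$ via the $\mu>1$ or $0<\mu\le 1$ branches of \eqref{v-infty-bdd}. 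Hence \eqref{mu-large-lt-intro} is precisely what forces $\frac{d}{dt}\mathcal{F}+\eta_0\mathcal{F}\le 0$ for some $\eta_0>0$.

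To upgrade $L^2$-exponential decay to the $L^\infty$-statement \eqref{cv-uvw-main3}, I would first secure uniform-in-time bounds on $\|u\|_{C^{1+\sigma}(\bar\Omega)}$ and $\|v\|_{C^{1+\sigma}(\bar\Omega)}$ by applying standard parabolic Schauder theory on unit time slabs, which is legitimate thanks to the $W^{1,\infty}\times W^{2,\infty}$-bounds of Theorem \ref{GB}. The Gagliardo--Nirenberg interpolation
\[
\|\varphi\|_{L^\infty}\le C\|\varphi\|_{W^{1,\infty}}^{n/(n+2)}\|\varphi\|_{L^2}^{2/(n+2)}
\]
applied to $\varphi=U,V$ then transfers $L^2$-exponential decay to $L^\infty$-exponential decay at the reduced rate $\eta=2\eta_0/(n+2)$, and elliptic regularity on the $w$-equation gives $\|w\|_{W^{2,\infty}}\le C\|U\|_{L^\infty}\to 0$ exponentially, completing (C2).

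Case (C1), with $a=\mu=0$, is handled along the same lines but with $u^*=v^*=\bar{u}_0$. Mass is now conserved, so $U$ has zero mean; integrating the $v$-equation shows $\overline{V}(t)\to 0$ exponentially, and in the absence of logistic dissipation the Poincar\'e inequality $\int U^2\le C_p^2\int|\nabla u|^2$ supplies the missing coercivity in $U$, while the reactive $-\int V^2$ together with $\int|\nabla v|^2$ handles $V$. Absorbing the three cross terms again via Young plus the $L^\infty$-bounds \eqref{v-infty-bdd}--\eqref{gradw-infty-bdd} yields a $2\times 2$ bilinear form whose non-positivity is guaranteed by $d\ge d_0(\chi)$, with the strict inequality producing exponential decay; since $d_0(0)=0$ by construction, the sub-case $\chi=a=\mu=0$ is covered. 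The main obstacle in both cases is the delicate balancing of the three convection/chemotaxis cross terms against the available dissipation, because the explicit bounds on $\|v\|_\infty$ and $\|\nabla w\|_\infty$ in \eqref{v-infty-bdd}--\eqref{gradw-infty-bdd} themselves depend in an intricate way on $\mu$ and $d$; tracking these dependencies is what forces the somewhat involved form of the threshold \eqref{mu-large-lt-intro}.
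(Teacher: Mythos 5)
Your overall architecture (Lyapunov functional giving $L^2$-decay, then a Gagliardo--Nirenberg upgrade to $L^\infty$ using uniform higher-order bounds, then elliptic regularity for $w$) matches the paper, but your choice of Lyapunov functional is where the argument breaks. The paper does \emph{not} use the quadratic quantity $\tfrac12\int U^2+\tfrac{\alpha}{2}\int V^2$; it uses logarithmic entropies: $\int_\Omega\bigl(u-b-b\ln\tfrac{u}{b}\bigr)+\tfrac{b\chi^2}{2d}\int_\Omega(v-b)^2$ in case (C2), and $\int_\Omega u\ln\tfrac{u}{\bar u}+\tfrac{\chi}{2}\int_\Omega|\nabla v|^2$ in case (C1). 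This is not cosmetic. First, in (C2) your claim that the logistic integral ``bounds below a coercive quantity of order $\mu\int U^2$'' fails: testing $u(a-\mu u^\theta)=\mu u\,(u^{*\theta}-u^{\theta})$ against $U$ gives $-\mu\int u\,(u-u^*)(u^{\theta}-u^{*\theta})$, and the factor $u$ kills the coercivity where $u$ is near $0$ (at $u=0$ the integrand vanishes while $U^2=u^{*2}>0$); no uniform positive lower bound on $u$ is available, so you get no margin of the form $-c\mu\int U^2$. Testing instead with $1-b/u$ (the entropy) yields $-\mu\int(u-b)(u^\theta-b^\theta)\le-\mu b^{\theta-1}\int(u-b)^2$ with no lower bound on $u$ needed. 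Second, your cross terms $\chi\int u\nabla u\cdot\nabla v$ and $-\xi_1\int u\nabla u\cdot\nabla w$ carry the weight $u$, so Young's inequality against $\int|\nabla u|^2$ forces the threshold to involve $\|u\|_{L^\infty}^2$ (the complicated quantity $M_3$), not the combination $\Lambda$ of \eqref{lambda-def}; the stated condition \eqref{mu-large-lt-intro} arises precisely because the entropy test function $1-b/u$ removes that weight.

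The gap is even more serious in (C1). With the quadratic functional, Young's inequality on $-\xi_1\int u\nabla u\cdot\nabla w$ produces a term of size $\xi_1^2\|u\|_{L^\infty}^2C_p^2\int U^2$ that must be beaten by the Poincar\'e coercivity coming from $\int|\nabla u|^2$; this would require $\xi_1\|u\|_{L^\infty}$ to be \emph{small}, whereas the standing boundedness hypothesis is $\xi_1\ge\xi_0\chi^2$, i.e.\ $\xi_1$ large, and no such smallness is assumed in the theorem. The paper's logarithmic test function resolves this structurally: $-\xi_1\int\nabla u\cdot\nabla w=-\xi_1\int|\Delta w|^2$ becomes an \emph{exact dissipation term} (via $\nabla u=-\nabla\Delta w$), and it is then this $\xi_1$-dissipation, together with $d\chi\int|\Delta v|^2$, that absorbs the chemotactic cross term rewritten as $2\chi\int\Delta w\,\Delta v$ --- which is exactly how the threshold $d_0(\chi)$ in \eqref{d*-def} (with the factor $\chi/\xi_1$) is generated. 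A further, smaller point: even with the correct functional, exponential decay in (C2) is not immediate from the differential inequality, because one must first prove $\|u-b\|_{L^\infty}\to0$ (via the space-time integral $\int_1^\infty\int_\Omega(u-b)^2<\infty$ and the H\"older bound of Lemma \ref{hg}) before the entropy can be compared two-sidedly with $\int(u-b)^2$ for large $t$; your proposal skips this bootstrapping step. The $L^2\to L^\infty$ upgrade at the end is essentially the paper's (though the paper uses the $\|\nabla u\|_{L^{2n}}$ bound of Lemma \ref{up} rather than parabolic Schauder estimates).
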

In the above texts, we have used the following short notations like
$$
\|f(\cdot,t)\|_{L^p}=\left(\int_\Omega |f(x,t)|^pdx\right)^\frac{1}{p}=\left(\int_\Omega |f(\cdot,t)|^p\right)^\frac{1}{p}.
$$
\begin{remark}
\
\
\begin{itemize}
\item[(i)] Compared to the global boundedness  in \cite{TW-NA-2021}, we first remove the technical assumption that $\Omega$ is convex,  and then we provide more qualitative information by digging out the dependence of solution upper bounds  on  the involving model parameters. Lastly,  we extend lower dimensional spaces to  any   dimensional ones.
\item[(ii)] Even though we cannot apply the arguments in \cite{JX-CMS-2021} to obtain large time behaviors of bounded solutions in that fashion, due to especially the qualitative bounds for $\|v\|_{L^\infty}$ and $\|\nabla w\|_{L^\infty}$ in \eqref{v-infty-bdd} and \eqref{gradw-infty-bdd}, upon successfully establishing higher order gradient estimates, we are able to show convergence and exponential convergence of bounded solutions in qualitative  ways.
\end{itemize}
\end{remark}

As a direct consequence of our main results, we obtain unconditional boundedness and convergence to constant steady states of \eqref{system} with $\chi=a=\mu=0$.
\begin{corollary}
All solutions $(u,v,w)$ to the IBVP \eqref{system} with $\chi=0$ are globally bounded. Moreover, if $a=\mu=0$, they converge exponentially to  $(\bar{u}_0,  \bar{u}_0, 0)$ as $t\rightarrow \infty$.
\end{corollary}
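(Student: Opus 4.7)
The corollary is designed to fall out immediately from Theorem \ref{GB} and Theorem \ref{LT}, so the plan is mostly to verify that the hypotheses hold trivially when $\chi=0$ and to invoke the two theorems.

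First I would establish global boundedness. Setting $\chi=0$ in condition (1) of Theorem \ref{GB} gives $\xi_1 \geq \xi_0 \chi^2 = 0$, which is automatically satisfied by the standing assumption $\xi_1>0$. Hence Theorem \ref{GB} applies and yields a unique global classical solution $(u,v,w)$ satisfying the qualitative bound \eqref{GB-1}. Note that this step is unconditional in the remaining parameters $a,\mu,\theta,\xi_2,d$, so global boundedness holds whether or not logistic damping is present.

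Next, under the additional assumption $a=\mu=0$, I would apply part (C1) of Theorem \ref{LT}. The theorem provides a threshold $d_0(\chi)\geq 0$ with the crucial normalization $d_0(0)=0$. Plugging in $\chi=0$, the hypothesis $d\geq d_0(\chi)$ becomes $d\geq 0$, which holds by the standing assumption $d>0$; in fact $d>0=d_0(0)$ is the strict inequality required for the exponential decay conclusion \eqref{conv-main-2}. Therefore $(u,v,w)\to(\bar{u}_0,\bar{u}_0,0)$ in the $L^\infty \times L^\infty \times W^{2,\infty}$ topology at an exponential rate, which is the asserted convergence.

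There is essentially no obstacle here beyond checking that the thresholds in Theorems \ref{GB} and \ref{LT} degenerate correctly as $\chi\to 0$. The whole point of structuring the main results with thresholds of the form $\xi_0\chi^2$ and $d_0(\chi)$ with $d_0(0)=0$ is precisely so that the chemotaxis-free case $\chi=0$ is captured unconditionally; once this bookkeeping is verified, the corollary is immediate and no further estimates are needed.
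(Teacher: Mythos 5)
Your proposal is correct and is exactly how the paper intends the corollary to be read: the paper gives no separate proof, stating it as a direct consequence of Theorems \ref{GB} and \ref{LT}, and your verification that condition (1) of Theorem \ref{GB} reduces to $\xi_1\geq 0$ and that $d>0=d_0(0)$ yields the strict-inequality case of (C1) matches the paper's explicit remark that the construction of $d_0$ in \eqref{d*-def} is designed to include the limiting case $\chi=0$.
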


It can be easily seen from   Theorem \ref{GB} that the dynamics of \eqref{system} with  small $\xi_1$ or simply $\xi_1=0$ remain largely open. We shall leave this  as a future project.

In the rest of this section, we outline the plan  as well as  main ideas of this article.

In Section 2, we first state the local well-posedness and extensibility criterion  and then derive  some basic properties for \eqref{system}.  Next,  we  extend the  interpolation type inequalities in \cite[Lemma 4.2]{TW-NA-2021} to general setting in Lemma \ref{abs-ineq}.  Finally,   we collect some abstract functionals inequalities including well-known smoothing  $L^p$-$L^q$ estimates of the Neumann heat group in $\Omega$, etc.

In section 3,  we aim to show the proof of qualitative boundedness in Theorem \ref{GB}. Our subtle analysis,  inspired from \cite{TW-NA-2021}, begins with the qualitative control of  $\|v\|_{L^\infty}$ via Moser-iteration in a careful manner, cf. Lemma \ref{b-v}.  Then,  thanks to the generalized interpolation type inequalities in   Lemma \ref{abs-ineq}, under one of the conditions in Theorem \ref{GB} and upon some skillful treatments, we  successfully establish a key ODI (ordinary differential inequality) for the time derivative of the coupled quantity $\|u\|_{L^k}^k+\|\nabla v\|_{L^{2k}}^{2k}$ for some $k>n/2$,  which allows us to derive qualitative bounds for $\|u\|_{L^k}+\|\nabla v\|_{L^{2k}}$, cf. Lemma \ref{high-reg}. Thereafter,  with subtle analysis via semigroup estimates, elliptic estimates and Sobolev embeddings, we finally  conclude the desired uniform-in-time qualitative bounds  as stated in \eqref{GB-1}.

In Section 4, we  proceed further to derive Schauder type estimates of $u$ and $L^{2n}$-estimate of $\nabla u$ so as to  study long time dynamics of bounded solutions to \eqref{system}. Since both the  $u$- and $v$-equation in \eqref{system} have cross-diffusions, the derivation of boundedness of $\|\nabla u\|_{L^{2n}}$ becomes lengthy and technical.  Roughly speaking, motivated from  \cite[Section 3.3]{LX-20-CVPDE}, we first establish an ODI for the time evolution of the coupled quantity $\|u\|_{L^2}^2+\|\Delta v\|_{L^2}^2$ to obtain a  bound for $\|\Delta v\|_{L^2}$. And then, we directly derive an ODI for $\|\nabla u\|_{L^2}^2$ so as to obtain a bound of  $\|\nabla u\|_{L^2}$. This enables us to handle the emerging boundary integral and thus allows us to derive an ODI involving $\|\Delta v\|_{L^{2n}}^{2n}$ for  the time derivative of $\|\nabla u\|_{L^{2n}}^{2n}$. Finally, applying the widely known maximal Sobolev regularity to the $v$-equation in \eqref{system}, we obtain the boundedness of $\|\nabla u\|_{L^{2n}}$, see Lemma \ref{up}.

With qualitative bounds in Section 3 and enhanced regularity properties in Section 4, in Section 5, we aim to study the large time behavior of bounded solutions to \eqref{system} . In the absence of ECs mitosis ($a=\mu=0$), based on the important fact that  upper bounds of  $\|v\|_{L^\infty}$ and $\|\nabla w\|_{L^\infty}$ are  bounded for  $d$ away from zero (cf. \eqref{v-infty-bdd} and \eqref{gradw-infty-bdd}), for $d$ suitably large, we  deduce a  Lyapunov functional for  the coupled quantity
$$
\int_\Omega u\ln \frac{u}{\bar{u}}+\frac{\chi}{2}\int_\Omega |\nabla v|^2,
$$
which enables us to derive the $L^2$- convergence and decay rate of $(u-\bar{u}_0,\nabla v)$ to $(0,0)$. Then using the Gagliardo-Nirenberg inequality together with the enhanced regularity of $(u,v)$ and standard elliptic estimates, we achieve the convergence properties of bounded solution $(u,v, w)$  as in \eqref{conv-main-1} and \eqref{conv-main-2} of Theorem \ref{LT}, see details in Section 5.2.

The convergence analysis for the case $a,\mu>0$ follows in a similar manner. Indeed, for $\mu$ satisfying \eqref{mu-large-lt-intro}, we are able to derive  a  Lyapunov functional for the coupled quantity
$$
\int_\Omega (u-b-b\ln \frac{u}{b})+ \frac{b\chi^2}{2d}\int_\Omega(v-b)^2, \ \ \ b=:(\frac{a}{\mu})^\frac{1}{\theta},
$$
which yields the key starting $L^2$-convergence of $(u-b, v-b)$ to $(0,0)$. Then we can easily use GN interpolation inequality to lift this $L^2$-convergence to $L^\infty$-convergence. Finally, the convergence property of $w$ follows from the  elliptic estimate applied to the $w$-equation in \eqref{system}, thus achieving \eqref{cv-uvw-main3} of Theorem \ref{LT},  see Section 5.2 for details.

\section{Local existence and preliminaries}
In the sequel, the integral $\int_\Omega f(x)dx$  and $\|f\|_{L^p(\Omega)}$ will be abbreviated as $\int_\Omega f$ and $\|f\|_{L^p}$, respectively. The generic constants $c_i$ (defined within the proof of lemmas) or $C_i$ (defined in the statements of lemmas) for $i=1,2,\cdots$, depending  on $n,\Omega$ and the initial data $u_0, v_0$  but they are independent of $t$ and the model parameters   $\chi,\xi_1,\xi_2,d, \mu$,  will vary line-by-line. The existence and uniqueness of local solutions of \eqref{system}  can be easily  shown in a fixed point theorem framework by means of the Amann's theorems \cite{Am-1990,Am-1993} and the parabolic/elliptic regularity theory, as similarly demonstrated in \cite{JX-CMS-2021, Tao-JMAA, TW-NA-2021, W-JMAA-2011}.
\begin{lemma}[Local existence]\label{LS}
    Let  $\Omega\subset\mathbb{R}^n(n\geq 1)$ be a bounded and smooth domain, the model parameters $\chi, \xi_1,\xi_2,d>0, a,\mu,\theta\geq0  $, and,  let   the initial data $(u_0,v_0)$ satisfy  \eqref{initial}. Then there exist a maximal time $T_{max}\in(0,\infty]$ and  a unique triple $(u,v,w)$ of functions with $u$ and $v$ positive  which solves \eqref{system} classically on $\bar{\Omega}\times(0,T_{max})$, and satisfies
\begin{equation*}
\begin{cases}
u\in C^0(\bar{\Omega}\times[0,T_{max}))\cap C^{2,1}(\bar{\Omega}\times(0,T_{max})), \\
v\in \displaystyle\cap_{p>n}C^0([0,T_{max});W^{1,p}(\Omega))\cap C^{2,1}(\bar{\Omega}\times(0,T_{max})),\\
w\in C^{2,0}(\bar{\Omega}\times(0,T_{max})).
\end{cases}
\end{equation*}
Moreover, if $T_{max}<\infty$,  then, for any $p>\max\{n,2\}$,
\begin{equation}\label{KS-1}
\lim\limits_{t\nearrow T_{max}}\sup\left\{\|u(\cdot,t)\|_{L^\infty}+\|v(\cdot,t)\|_{W^{1,p}}\right\}=\infty.
\end{equation}
\end{lemma}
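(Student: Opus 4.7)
The plan is to eliminate $w$ via the elliptic third equation, reducing \eqref{system} to a two-component quasilinear parabolic system to which Amann's theory applies. First, the Neumann problem
\begin{equation*}
-\Delta w = u-\bar u, \quad \frac{\partial w}{\partial\nu}\Big|_{\partial\Omega}=0, \quad \int_\Omega w = 0
\end{equation*}
admits a unique solution $w=\mathcal{N}[u-\bar u]$ for every mean-zero datum, and standard elliptic regularity yields $\mathcal{N}:L^p(\Omega)\to W^{2,p}(\Omega)$ continuously for all $p\in(1,\infty)$. In particular, $\nabla w$ and $\Delta w$ become nonlocal smoothing functionals of $u$, with $\|\nabla w\|_{L^q}$ controlled by $\|u\|_{L^p}$ through Sobolev embedding.

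Substituting this representation into the first two equations of \eqref{system}, I obtain a closed system for $U=(u,v)^T$ of the form
\begin{equation*}
U_t = \nabla\cdot\bigl(A(U)\nabla U\bigr) + B(U,\nabla U) + F(U),
\end{equation*}
with the upper-triangular diffusion matrix
\begin{equation*}
A(U)=\begin{pmatrix} 1 & -\chi u\\ 0 & d\end{pmatrix},
\end{equation*}
whose eigenvalues are $1$ and $d$; it is therefore uniformly normally elliptic. The convective corrections $\xi_1\nabla\cdot(u\nabla w)$ and $\xi_2\nabla\cdot(v\nabla w)$ enter $B$ as nonlocal lower-order terms through $\mathcal{N}$, and the logistic contribution is absorbed into $F$. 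Homogeneous Neumann boundary conditions close the problem.

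At this stage Amann's abstract existence theorems \cite{Am-1990,Am-1993} for triangular normally elliptic quasilinear systems apply and yield, for initial data in $W^{1,p}(\Omega)^2$ with $p>n$, a unique maximal classical solution with the regularity stated in the lemma. The only genuine data-regularity issue is that $u_0$ is merely in $C^0(\bar\Omega)$, but since $u$ does not appear in the cross-diffusion term of its own equation, a standard approximation-plus-parabolic-smoothing argument (or Amann's variant admitting continuous data for the component whose equation is only semilinear) delivers $u\in C^0(\bar\Omega\times[0,T_{max}))$. Nonnegativity of $u$ and $v$ for $t>0$ follows componentwise from the parabolic comparison principle applied to each equation individually, treating $\nabla v$ and $\nabla w$ as given coefficients; the strong maximum principle then upgrades this to strict positivity, and $w$ inherits its regularity from $u$ via $\mathcal{N}$.

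Finally, for the blow-up criterion \eqref{KS-1} I would argue by contrapositive: assuming $\|u(\cdot,t)\|_{L^\infty}+\|v(\cdot,t)\|_{W^{1,p}}$ stays bounded on $[0,T_{max})$ for some $p>\max\{n,2\}$, elliptic regularity bounds $\|\nabla w\|_{L^\infty}$ by a function of $\|u\|_{L^\infty}$, so all coefficients of the effective parabolic system lie uniformly in the class required to restart Amann's local theory from any $t<T_{max}$, contradicting the maximality of $T_{max}$. The main technical point I expect to be tedious rather than difficult is the bookkeeping needed to handle the cross-diffusion $-\chi\nabla\cdot(u\nabla v)$ together with the nonlocal convective term $\xi_1\nabla\cdot(u\nabla w)$ simultaneously inside Amann's framework; however, this step is essentially identical to the corresponding local existence arguments in \cite{JX-CMS-2021,Tao-JMAA,TW-NA-2021,W-JMAA-2011}, to which I would refer for the routine verification of normal ellipticity and the triangular structure required by Amann's theorems.
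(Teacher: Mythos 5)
Your proposal is correct and follows essentially the same route the paper takes: the paper itself gives no detailed proof but states that local existence follows from a fixed point / Amann framework combined with elliptic regularity for the $w$-equation, exactly as you outline (elimination of $w$ via the Neumann solution operator, triangular normally elliptic structure for $(u,v)$, comparison and strong maximum principles for positivity, and the contrapositive argument for the extensibility criterion \eqref{KS-1}).
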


\begin{lemma}[Young's inequality with $\varepsilon$]\label{YI} Let $1<p,q<\infty$, $\frac{1}{p}+\frac{1}{q}=1$. Then
    \begin{equation*}
    XY\leq \varepsilon X^P+\frac{Y^q}{q\left(\varepsilon p\right)^\frac{q}{p}}\ \ (X,Y>0,\ \varepsilon>0).
    \end{equation*}
\end{lemma}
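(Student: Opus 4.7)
The plan is to reduce the inequality to the classical (unweighted) form of Young's inequality, namely $ab \le \frac{a^p}{p} + \frac{b^q}{q}$ for $a,b\ge 0$ and conjugate exponents $p,q\in(1,\infty)$ with $\frac{1}{p}+\frac{1}{q}=1$. The latter is itself an immediate consequence of the concavity of the logarithm (or, equivalently, of the convexity of the exponential applied to $\log a$ and $\log b$), and I would take it for granted as the starting point.

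First, I would introduce a positive scaling parameter $\lambda>0$ and rewrite the product as $XY = (\lambda X)(\lambda^{-1}Y)$. Applying the classical Young's inequality to the two factors gives
\begin{equation*}
XY \;\le\; \frac{(\lambda X)^p}{p} + \frac{(\lambda^{-1}Y)^q}{q} \;=\; \frac{\lambda^p}{p}\,X^p + \frac{1}{q\lambda^q}\,Y^q.
\end{equation*}

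Second, I would tune $\lambda$ so that the prefactor in front of $X^p$ equals the prescribed constant $\varepsilon$. Solving $\lambda^p/p = \varepsilon$ gives $\lambda=(\varepsilon p)^{1/p}$, whence $\lambda^q=(\varepsilon p)^{q/p}$, and substituting back yields the claimed estimate
\begin{equation*}
XY \;\le\; \varepsilon X^p + \frac{Y^q}{q(\varepsilon p)^{q/p}}.
\end{equation*}

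There is no genuine obstacle here: the hypothesis $\frac{1}{p}+\frac{1}{q}=1$ is exactly what licenses the invocation of the classical Young's inequality, while $\varepsilon>0$ and $p>1$ guarantee that the scaling $\lambda=(\varepsilon p)^{1/p}$ is well-defined and strictly positive. The argument is entirely standard and requires no further input beyond the classical inequality itself.
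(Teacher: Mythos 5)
Your proof is correct: the scaling $XY=(\lambda X)(\lambda^{-1}Y)$ with $\lambda=(\varepsilon p)^{1/p}$ applied to the classical Young inequality gives exactly the stated constant $\frac{1}{q(\varepsilon p)^{q/p}}$. The paper states this lemma without proof as a standard fact, and your argument is precisely the standard derivation, so there is nothing to compare or criticize.
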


\begin{lemma}
Let $(u,v,w)$ be a solution of \eqref{system} obtained in Lemma \ref{LS}. Then
\begin{equation}\label{b-uvw1}
  |\Omega|\bar{u}=\|u(\cdot,t)\|_{L^1}\leq  m_1,  \ \ \ \forall t\in(0,T_{max}),
\end{equation}
where
\begin{equation*}
m_1:=\begin{cases}
\|u_0\|_{L^1}, &\mathrm{if}\ \  a=\mu=0,\\[0.25cm]
\|u_0\|_{L^1}+(1+a)^{\frac{1+\theta}{\theta}}
    (\frac{1}{\mu})^{\frac{1}{\theta}}(\frac{2}{\theta+1})^{\frac{1}{\theta}}\frac{\theta}{\theta+1}
    |\Omega|, &\mathrm{if}\ \  \mu>0,
\end{cases}
\end{equation*}
and
\begin{equation}\label{b-uvw3}
    \|v(\cdot,t)\|_{L^1}\leq m_1+\|v_0\|_{L^1}.
\end{equation}
\end{lemma}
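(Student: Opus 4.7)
\medskip

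\noindent\textbf{Proof proposal.} The plan is the standard mass-dissipation argument: test each PDE with the constant $1$, exploit the Neumann boundary conditions to kill every divergence and diffusion term, and then close the resulting scalar ODIs.

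For the $u$-equation, I would integrate the first line of \eqref{system} over $\Omega$ and note that the three terms $\int_\Omega \Delta u$, $\chi\int_\Omega \nabla\cdot(u\nabla v)$, and $\xi_1\int_\Omega \nabla\cdot(u\nabla w)$ all vanish thanks to $\partial_\nu u=\partial_\nu v=\partial_\nu w=0$ on $\partial\Omega$. This yields
\begin{equation*}
\frac{d}{dt}\int_\Omega u \;=\; a\int_\Omega u \;-\;\mu\int_\Omega u^{\theta+1}.
\end{equation*}
If $a=\mu=0$, mass is conserved and $\|u(\cdot,t)\|_{L^1}=\|u_0\|_{L^1}$. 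If $\mu>0$, the key step is to absorb the linear term into the superlinear dissipation by Young's inequality (Lemma \ref{YI}) with exponents $p=\theta+1$, $q=(\theta+1)/\theta$ and choice $\varepsilon=\mu/2$, applied in the form
\begin{equation*}
(1+a)\,u \;\le\; \tfrac{\mu}{2}u^{\theta+1} \;+\; \tfrac{\theta}{\theta+1}\,\Bigl(\tfrac{2}{(\theta+1)\mu}\Bigr)^{1/\theta}(1+a)^{(\theta+1)/\theta}.
\end{equation*}
Adding $\int_\Omega u$ to both sides of the mass identity and inserting this pointwise bound produces an ODI of the form $y'+y\le C$, where $C$ matches exactly the second summand in the definition of $m_1$ (after multiplying by $|\Omega|$). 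A direct ODE comparison then gives $\|u(\cdot,t)\|_{L^1}\le \max\{\|u_0\|_{L^1},C\}\le m_1$.

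For the $v$-equation, the same integration trick kills $d\int_\Omega\Delta v$ and $\xi_2\int_\Omega\nabla\cdot(v\nabla w)$ by the Neumann conditions, leaving the clean linear ODE
\begin{equation*}
\frac{d}{dt}\|v(\cdot,t)\|_{L^1} + \|v(\cdot,t)\|_{L^1} \;=\; \|u(\cdot,t)\|_{L^1} \;\le\; m_1.
\end{equation*}
Integrating this first-order linear inequality (equivalently, applying Gronwall) gives $\|v(\cdot,t)\|_{L^1}\le e^{-t}\|v_0\|_{L^1}+m_1(1-e^{-t})\le \|v_0\|_{L^1}+m_1$.

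There is no real obstacle here: the only nontrivial ingredient is selecting the Young's inequality constants so that the resulting bound reproduces the precise form of $m_1$ stated in the lemma. Positivity of $u,v$ (from Lemma \ref{LS}) is used implicitly to drop absolute values, and the identity $|\Omega|\bar u=\|u\|_{L^1}$ is just the definition of the mean.
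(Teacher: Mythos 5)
Your proposal is correct and follows essentially the same route as the paper: integrate both equations over $\Omega$, use the Neumann conditions to kill the divergence and diffusion terms, absorb $(1+a)\int_\Omega u$ into $\frac{\mu}{2}\int_\Omega u^{\theta+1}$ via Young's inequality with exactly the exponents and $\varepsilon=\mu/2$ you chose, and close the resulting linear ODIs for $\int_\Omega u$ and $\int_\Omega v$. The only cosmetic difference is that the paper integrates the ODI via the factor $e^t$ to get the additive bound $\|u_0\|_{L^1}+C$ directly, whereas you invoke an ODE comparison giving $\max\{\|u_0\|_{L^1},C\}$, which is a slightly sharper intermediate estimate implying the same $m_1$.
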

\begin{proof}
Integrating the first and second equations of \eqref{system} over $\Omega$ respectively, one has
\begin{equation}\label{b-uvw4*}
    \frac{d}{dt}\int_\Omega u+\mu\int_\Omega u^{\theta+1}=a\int_\Omega u ,
\end{equation}
and
\begin{equation}\label{b-uvw4}
    \frac{d}{dt}\int_\Omega v+\int_\Omega v=\int_\Omega u.
\end{equation}
If $a=\mu=0$, one can easily check that \eqref{b-uvw1} and \eqref{b-uvw3} hold by integrating  \eqref{b-uvw4*} and \eqref{b-uvw4}.

Next,  if $\mu>0$, it follows from the Young's inequality with $\varepsilon$ in Lemma \ref{YI} that
\begin{align*}
(1+a)\int_\Omega u\leq \frac{\mu}{2}\int_\Omega u^{\theta+1}+(1+a)^{\frac{1+\theta}{\theta}}
    (\frac{1}{\mu})^{\frac{1}{\theta}}(\frac{2}{\theta+1})^{\frac{1}{\theta}}\frac{\theta}{\theta+1}
    |\Omega| ,
    \end{align*}
which, upon being substituted into \eqref{b-uvw4*}, gives
\begin{equation}\label{b-uvw5}
    \frac{d}{dt}\int_\Omega u+\int_\Omega u+\frac{\mu}{2}\int_\Omega u^{\theta+1}\leq
  (1+a)^{\frac{1+\theta}{\theta}}
    (\frac{1}{\mu})^{\frac{1}{\theta}}(\frac{2}{\theta+1})^{\frac{1}{\theta}}\frac{\theta}{\theta+1}
    |\Omega|.
\end{equation}
Then    multiplying  \eqref{b-uvw5} by the factor $e^t$  and then integrating, we  simply get
\begin{equation*}
   \int_\Omega u
    \leq (1+a)^{\frac{1+\theta}{\theta}}
    (\frac{1}{\mu})^{\frac{1}{\theta}}(\frac{2}{\theta+1})^{\frac{1}{\theta}}\frac{\theta}{\theta+1}
    |\Omega|+\int_\Omega u_0,
\end{equation*}
which entails \eqref{b-uvw1}. Then substituting \eqref{b-uvw1}  into \eqref{b-uvw4}, one has
\begin{equation}\label{b-uvw6}
    \frac{d}{dt}\int_\Omega v+\int_\Omega v\leq(1+a)^{\frac{1+\theta}{\theta}}(\frac{1}{\mu})^{\frac{1}{\theta}}(\frac{2}{\theta+1})^{\frac{1}{\theta}}\frac{\theta}{\theta+1}|\Omega|+\int_\Omega u_0.
\end{equation}
Solving this ODI or using the Gr\"onwall's inequality again, then \eqref{b-uvw6} implies \eqref{b-uvw3}.
\end{proof}
For our later boundedness purpose, let us  generalize the  interpolation type inequalities in \cite[Lemma 4.2]{TW-NA-2021} to the general case as follows.
\begin{lemma} \label{abs-ineq} Let $\Omega\subset \mathbb{R}^n(n\geq 1)$ be a bounded and smooth domain and let $g, h\in C^2(\bar{\Omega})$ with  $\frac{\partial g}{\partial \nu}|_{\partial\Omega}=\frac{\partial h}{\partial \nu}|_{\partial\Omega}=0$. Then, for all $p\geq 1$,
\begin{equation}\label{inter-1}
    \left|\int_\Omega|\nabla g|^{2p-2}\nabla g\cdot\nabla(\nabla g\cdot\nabla h)\right|\leq\left(\frac{\sqrt{n}}{2p}+1\right)\|\nabla g\|^{2p}_{L^{2(p+1)}}\|D^2h\|_{L^{p+1}},
\end{equation}
and
\begin{equation}\label{inter-2}
\begin{split}
  &\left|\int_\Omega g\Delta h\nabla\cdot(|\nabla g|^{2p-2}\nabla g)\right|\\
  &\leq\left(2(p-1)+\sqrt{n}\right)\|g\|_{L^\infty}\|\nabla g\|^{p-1}_{L^{2(p+1)}}\|\Delta h\|_{L^{p+1}}\left(\int_\Omega|\nabla g|^{2p-2}|D^2g|^2\right)^{\frac{1}{2}},
  \end{split}
\end{equation}
as well as
\begin{equation}\label{inter-3}
    \int_\Omega|\nabla g|^{2(p+1)}\leq\left(2p+\sqrt{n}\right)^2\|g\|^2_{L^\infty}\int_\Omega|\nabla g|^{2(p-1)}|D^2g|^2.
\end{equation}
\end{lemma}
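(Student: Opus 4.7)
The three inequalities should follow from direct integration-by-parts computations, exploiting the Neumann boundary conditions to discard boundary terms, together with pointwise identities relating $D^2 g$, $\nabla|\nabla g|^2$, and $\Delta g$, and standard Hölder estimates.

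For \eqref{inter-1}, the plan is to expand componentwise
\[
\nabla(\nabla g\cdot\nabla h) = D^2 g\, \nabla h + D^2 h\, \nabla g,
\]
so that $|\nabla g|^{2p-2}\nabla g\cdot\nabla(\nabla g\cdot\nabla h)$ splits into two pieces. The first piece uses the identity $|\nabla g|^{2p-2}\nabla g\cdot D^2 g = \frac{1}{2p}\nabla|\nabla g|^{2p}$; integrating by parts against $\nabla h$ and using $\partial_\nu h=0$ converts it to $-\frac{1}{2p}\int_\Omega|\nabla g|^{2p}\Delta h$, which is controlled by $\frac{\sqrt{n}}{2p}\|\nabla g\|_{L^{2(p+1)}}^{2p}\|D^2 h\|_{L^{p+1}}$ after applying $|\Delta h|\le\sqrt{n}|D^2 h|$ and Hölder with conjugate exponents $\frac{p+1}{p}$ and $p+1$. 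The second piece is handled directly by the pointwise bound $|\nabla g\cdot D^2 h\,\nabla g|\le |\nabla g|^2|D^2 h|$ and the same Hölder pair, contributing the summand $1$ in the constant.

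For \eqref{inter-2}, I would compute
\[
\nabla\cdot(|\nabla g|^{2p-2}\nabla g) = |\nabla g|^{2p-2}\Delta g + 2(p-1)|\nabla g|^{2p-4}\,\nabla g\cdot D^2 g\,\nabla g,
\]
which again splits the integral into two terms carrying the coefficients $1$ and $2(p-1)$, respectively. In each term, bound $|\Delta g|\le\sqrt{n}|D^2 g|$ (first term) or $|\nabla g\cdot D^2 g\,\nabla g|\le|\nabla g|^2|D^2 g|$ (second term), pull out $\|g\|_{L^\infty}$, and then apply the three-function Hölder inequality with exponents $p+1,\ \frac{2(p+1)}{p-1},\ 2$ applied to $|\Delta h|$, $|\nabla g|^{p-1}$, and $|\nabla g|^{p-1}|D^2 g|$; one checks $\tfrac{1}{p+1}+\tfrac{p-1}{2(p+1)}+\tfrac{1}{2}=1$. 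Summing the two pieces yields the combined constant $2(p-1)+\sqrt{n}$.

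For \eqref{inter-3}, integrating by parts once,
\[
\int_\Omega|\nabla g|^{2(p+1)} = \int_\Omega|\nabla g|^{2p}\nabla g\cdot\nabla g = -\int_\Omega g\,\nabla\cdot(|\nabla g|^{2p}\nabla g),
\]
the boundary term vanishing by $\partial_\nu g=0$. The same product-rule identity gives two summands with coefficients $1$ and $2p$; controlling $|\Delta g|\le\sqrt{n}|D^2 g|$ and $|\nabla g\cdot D^2 g\,\nabla g|\le|\nabla g|^2|D^2 g|$ and applying Cauchy--Schwarz to the factors $|\nabla g|^{p+1}$ and $|\nabla g|^{p-1}|D^2 g|$ produces
\[
\int_\Omega|\nabla g|^{2(p+1)} \le (2p+\sqrt n)\|g\|_{L^\infty}\Bigl(\int_\Omega|\nabla g|^{2(p+1)}\Bigr)^{1/2}\Bigl(\int_\Omega|\nabla g|^{2p-2}|D^2 g|^2\Bigr)^{1/2}.
\]
Dividing by the square root of the left-hand side and squaring yields \eqref{inter-3}.

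The main obstacle is not conceptual but bookkeeping: keeping the correct numerical constants $\frac{\sqrt n}{2p}+1$, $2(p-1)+\sqrt n$, and $(2p+\sqrt n)^2$ requires carefully pairing the $\sqrt n$ factor (which enters only through $|\Delta|\le\sqrt n|D^2|$) with the Laplacian-part coefficient and the remaining integer coefficient with the pure-Hessian part. The Neumann boundary condition $\partial_\nu g=\partial_\nu h=0$ is used in a single integration by parts in \eqref{inter-1} and \eqref{inter-3}; no boundary curvature term appears, which is what allows the estimates to hold uniformly regardless of convexity of $\Omega$.
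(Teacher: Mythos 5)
Your proposal is correct and follows essentially the same route as the paper's proof: the same decomposition $\nabla(\nabla g\cdot\nabla h)=D^2g\,\nabla h+D^2h\,\nabla g$ with the identity $\nabla|\nabla g|^{2p}=2p|\nabla g|^{2(p-1)}D^2g\,\nabla g$ and a single integration by parts for \eqref{inter-1}, the same product-rule expansion of $\nabla\cdot(|\nabla g|^{2p-2}\nabla g)$ with the three-factor H\"older application for \eqref{inter-2}, and the same integration by parts against $g$ followed by Cauchy--Schwarz for \eqref{inter-3}. The constants and the pairing of $\sqrt n$ with the Laplacian terms all check out.
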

\begin{proof}
Using the following two facts
$$\nabla(\nabla g\cdot\nabla h)=D^2g\cdot\nabla h+D^2h\cdot\nabla g ,$$
and
\begin{equation}\label{e1}
\nabla|\nabla g|^{2p}=p|\nabla g|^{2(p-1)}\nabla|\nabla g|^2=2p|\nabla g|^{2(p-1)}D^2g\cdot\nabla g,
\end{equation}
we thus use  the symmetry of $D^2g$ and integration by parts formula to derive that
\begin{equation}\label{inter-4}
\begin{split}
    &\int_\Omega|\nabla g|^{2p-2}\nabla g\cdot\nabla(\nabla g\cdot\nabla h)\\
    &=\int_\Omega|\nabla g|^{2p-2}\nabla g\cdot\left(D^2g\cdot\nabla h+ D^2h\cdot\nabla g\right)\\
    &=\int_\Omega|\nabla g|^{2p-2}(D^2g\cdot\nabla g)\cdot\nabla h+\int_\Omega|\nabla g|^{2p-2}\nabla g\cdot(D^2h\cdot\nabla g)\\
    &=\frac{1}{2p}\int_\Omega\nabla|\nabla g|^{2p}\cdot\nabla h+\int_\Omega|\nabla g|^{2p-2}\nabla g\cdot(D^2h\cdot\nabla g)\\
    &=-\frac{1}{2p}\int_\Omega|\nabla g|^{2p}\cdot\Delta h+\int_\Omega|\nabla g|^{2p-2}\nabla g\cdot(D^2h\cdot\nabla g).
\end{split}
\end{equation}
Noting the fact  $|\Delta h|\leq\sqrt{n}|D^2h|$ and using H\"older's  inequality, from \eqref{inter-4} we have
\begin{align*}
    \left|\int_\Omega|\nabla g|^{2p-2}\nabla g\cdot\nabla(\nabla g\cdot\nabla h)\right|
    \leq&\frac{\sqrt{n}}{2p}\int_\Omega|\nabla g|^{2p}|D^2h|+\int_\Omega|\nabla g|^{2p}|D^2h|\\
    \leq&(\frac{\sqrt{n}}{2p}+1)\int_\Omega|\nabla g|^{2p}|D^2h|\\
    \leq&(\frac{\sqrt{n}}{2p}+1)\|\nabla g\|^{2p}_{L^{2(p+1)}}\|D^2h\|_{L^{p+1}},
\end{align*}
which is our desired estimate  \eqref{inter-1}. Similarly, we infer that
\begin{align*}
    &\left|\int_\Omega g\Delta h\nabla\cdot(|\nabla g|^{2p-2}\nabla g)\right|\\
    =&\left|\int_\Omega g\Delta h\nabla(|\nabla g|^{2p-2})\cdot\nabla g+\int_\Omega g\Delta h|\nabla g|^{2p-2}\Delta g\right|\\
    =&\left|2(p-1)\int_\Omega g\Delta h|\nabla g|^{2(p-2)}\nabla g\cdot(D^2g\cdot\nabla g)+\int_\Omega g\Delta h|\nabla g|^{2p-2}\Delta g\right|\\
    \leq&(2(p-1)+\sqrt{n})\int_\Omega|g|\cdot|\Delta h|\cdot|\nabla g|^{2p-2}\cdot|D^2g|\\
    \leq &(2(p-1)+\sqrt{n})\|g\|_{L^\infty}\|\Delta h\|_{L^{p+1}}\|\nabla g\|^{p-1}_{L^{2(p+1)}}\||\nabla g|^{p-1}|D^2g|\|_{L^2},
\end{align*}
which gives rise to  \eqref{inter-2}.

Finally, we  use \eqref{e1} to estimate the term  $\int_\Omega|\nabla g|^{2(p+1)}$ as follows:
\begin{align*}
    \int_\Omega|\nabla g|^{2(p+1)}&=\int_\Omega|\nabla g|^{2p}\nabla g\cdot\nabla g\\
    &=-\int_\Omega g|\nabla g|^{2p}\Delta g-\int_\Omega g\nabla(|\nabla g|^{2p})\cdot\nabla g\\
    &=-\int_\Omega g|\nabla g|^{2p}\Delta g-2p\int_\Omega g|\nabla g|^{2(p-1)}\nabla g\cdot(D^2g\cdot\nabla g)\\
    &\leq(2p+\sqrt{n})\int_\Omega|g|\cdot|\nabla g|^{2p}\cdot|D^2g|\\
    &\leq(2p+\sqrt{n})\|g\|_{L^\infty}\left(\int_\Omega|\nabla g|^{2(p+1)}\right)^{\frac{1}{2}}\left(\int_\Omega|\nabla g|^{2(p-1)}|D^2g|^2\right)^{\frac{1}{2}},
\end{align*}
which upon  simple algebraic manipulations  entails \eqref{inter-3}.
\end{proof}
Now, for convenience of reference, we collect the well-known smoothing  $L^p$-$L^q$ estimates of the Neumann heat group in $\Omega$, which can   be found in \cite{Cao15, W-JDE-2010}.
\begin{lemma}\label{SG}
Let $(e^{t\Delta})_{t\geq0}$ be the Neumann heat semigroup in $\Omega$, and let $\lambda_1>0$ denote the first nonzero eigenvalue of $-\Delta$ in $\Omega$ under Neumann boundary conditions. Then there exist some positive constants $c_i~(i=1,2,3)$ depending only on $\Omega$ such that:
\begin{itemize}
 \item[(i)] If $1\leq q\leq p\leq\infty$, then
   \begin{equation}\label{SG-1}
     \|e^{t\Delta}f\|_{L^p}\leq c_1\left(1+t^{-\frac{n}{2}(\frac{1}{q}-\frac{1}{p})}\right)e^{-\lambda_1t}\|f\|_{L^q}~\text{for~all}~t>0
   \end{equation}
   holds for all $f\in L^q(\Omega)$ satisfying $\int_\Omega f=0$.
 \item[(ii)] If $1\leq q\leq p\leq \infty$, then
   \begin{equation*}
     \|\nabla e^{t\Delta}f\|_{L^p}\leq c_2\left(1+t^{-\frac{1}{2}-\frac{n}{2}(\frac{1}{q}-\frac{1}{p})}\right)e^{-\lambda_1t}\|f\|_{L^q}~\text{for~all}~t>0
   \end{equation*}
   is valid for all $f\in L^q(\Omega)$.
 \item[(iii)] If $2\leq q\leq p< \infty$, then
   \begin{equation*}
     \|\nabla e^{t\Delta}f\|_{L^p}\leq c_3\left(1+t^{-\frac{n}{2}(\frac{1}{q}-\frac{1}{p})}\right)e^{-\lambda_1t}\|\nabla f\|_{L^q}~\text{for~all}~t>0
   \end{equation*}
   is true for all $f\in W^{1,p}(\Omega)$.
 \item[(iv)] If $1< q\leq p\leq \infty$, then
   \begin{equation}\label{SG-4}
     \|e^{t\Delta}\nabla\cdot f\|_{L^p}\leq c_4\left(1+t^{-\frac{1}{2}-\frac{n}{2}(\frac{1}{q}-\frac{1}{p})}\right)e^{-\lambda_1t}\|f\|_{L^q}~\text{for~all}~t>0
   \end{equation}
 is valid for all $f\in (W^{1,p}(\Omega))^n$.
\end{itemize}
\end{lemma}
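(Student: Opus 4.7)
The plan is to derive all four estimates from two fundamental inputs: (a) classical Gaussian-type pointwise bounds for the Neumann heat kernel $p(t,x,y)$ on bounded smooth domains, and (b) the spectral decomposition of $-\Delta$ under Neumann boundary conditions, which isolates the spectral gap $\lambda_1>0$ when the initial datum has zero mean. Write $0=\lambda_0<\lambda_1\leq \lambda_2\leq\cdots$ with an associated orthonormal basis $\{\phi_k\}$ in $L^2(\Omega)$, so that
\begin{equation*}
p(t,x,y)=\frac{1}{|\Omega|}+\sum_{k\geq 1}e^{-\lambda_k t}\phi_k(x)\phi_k(y).
\end{equation*}
The whole machinery rests on the Gaussian upper bound $0\leq p(t,x,y)\leq C t^{-n/2}e^{-c|x-y|^2/t}$ valid for $0<t\leq 1$, together with the analogous bound for $|\nabla_x p(t,x,y)|$ with the extra factor $t^{-1/2}$; both can be established by a parametrix construction (Greiner/Fulks--Guenther) or by Nash--Moser techniques, and the smoothness of $\partial\Omega$ is used to handle reflection at the boundary.

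First, I would prove (i). For $0<t\leq 1$ and a mean-zero $f\in L^q(\Omega)$, rewrite $e^{t\Delta}f(x)=\int_\Omega [p(t,x,y)-|\Omega|^{-1}]f(y)\,dy$ and apply Young's convolution-type inequality to the Gaussian bound: with $1+1/p=1/r+1/q$ one finds $\|e^{t\Delta}f\|_{L^p}\leq C\,t^{-(n/2)(1/q-1/p)}\|f\|_{L^q}$. For $t\geq 1$, split as $e^{t\Delta}f=e^{(t-1)\Delta}(e^{\Delta}f)$; since $e^{\Delta}f$ has vanishing mean, the $L^2$-spectral decomposition yields $\|e^{(t-1)\Delta}g\|_{L^2}\leq e^{-\lambda_1(t-1)}\|g\|_{L^2}$ for any mean-zero $g\in L^2$, and a further short-time smoothing step of length $1/2$ then lifts $L^2$ to $L^p$. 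Concatenating with the $L^q\to L^2$ smoothing gives a bound $Ce^{-\lambda_1 t}\|f\|_{L^q}$ in the long-time regime. Adding the two regimes produces the stated prefactor $1+t^{-(n/2)(1/q-1/p)}$ together with the exponential damping $e^{-\lambda_1 t}$.

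Next, for (ii) and (iii) I would repeat the short-time/long-time split but using the gradient kernel bound. Short time gives $\|\nabla e^{t\Delta}f\|_{L^p}\leq C t^{-1/2-(n/2)(1/q-1/p)}\|f\|_{L^q}$ by the same Young-type argument applied to $\nabla_x p$. Long time uses the semigroup identity $\nabla e^{t\Delta}f=\nabla e^{(t/2)\Delta}\bigl(e^{(t/2)\Delta}f\bigr)$: the outer factor contributes the bounded short-time gradient smoothing while the inner one provides exponential decay through the spectral gap (after subtracting the mean when necessary; without the zero-mean restriction one still gets decay because the gradient kills the zero mode). For (iii), the additional assumption $f\in W^{1,p}$ permits one to commute $\nabla$ through $e^{t\Delta}$ using the Neumann compatibility of the gradient on test functions in the interior, followed by the $L^q\to L^p$ smoothing from (i) applied to $\nabla f$. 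Finally, (iv) follows by duality: $\langle e^{t\Delta}\nabla\cdot f,\varphi\rangle=-\langle f,\nabla e^{t\Delta}\varphi\rangle$, and the estimate from (ii) applied to the dual exponents transfers directly to the target bound.

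The main obstacle, as is customary for these results, is step (a): the Gaussian pointwise bounds for the Neumann heat kernel and especially for its spatial gradient up to the boundary of a merely smooth (and, in the applications of the main theorems, possibly non-convex) domain. Once those kernel bounds are available, the rest of the argument is essentially algebra with Young, Hölder, and the semigroup property. A secondary subtlety is ensuring that the long-time spectral-gap estimate can be combined with the short-time smoothing uniformly across the full range $0<t<\infty$, which is handled cleanly by gluing on the overlap $t\approx 1$ and absorbing multiplicative constants into the factor $1+t^{-\alpha}$. For this reason, and because the estimates are standard, one simply invokes the established references \cite{Cao15,W-JDE-2010} rather than redoing the kernel analysis in full.
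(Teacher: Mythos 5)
Your proposal lands in the same place as the paper: the paper offers no proof of Lemma \ref{SG} at all, stating only that these estimates ``can be found in \cite{Cao15, W-JDE-2010}'', and you likewise defer to those references after sketching the standard kernel-bound/spectral-gap argument. The sketch is the classical one and is essentially sound, with one caveat worth noting for part (iii): on a bounded domain with Neumann conditions the gradient does \emph{not} commute with $e^{t\Delta}$, so the step ``commute $\nabla$ through $e^{t\Delta}$ \ldots followed by the smoothing from (i) applied to $\nabla f$'' is not literally valid; the cited proofs instead establish the case $p=q$ via a parabolic maximum-principle argument for $|\nabla e^{t\Delta}f|^2$ and then compose with (ii) to reach $q<p$. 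Since you ultimately invoke the same references the paper does, this does not affect the correctness of your overall argument.
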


\begin{lemma}[\cite{JW-preprint}]\label{II}
Let $f(x,t)$ be  a positive function for $(x,t)\in\Omega \times (0,\infty)$ and define  $\bar{f}=\frac{1}{|\Omega|}\int_\Omega f$. Then it holds that
\begin{equation*}
0\leq \frac{1}{2\bar{f}}\|f-\bar{f}\|_{L^1}^2\leq \int_\Omega f\ln \frac{f}{\bar{f}}\leq \frac{1}{\bar{f}}\|f-\bar{f}\|_{L^2}^2.
\end{equation*}
\end{lemma}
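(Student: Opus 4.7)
The leftmost bound is immediate from non-negativity of the $L^1$-norm. For the rightmost inequality, I would apply the elementary estimate $\ln y\le y-1$ (valid for $y>0$) with $y=f/\bar f$ and multiply through by $f$ to obtain the pointwise bound $f\ln(f/\bar f)\le (f^2-f\bar f)/\bar f$; integrating and invoking the mean-zero identity $\int_\Omega(f-\bar f)\bar f=0$ then yields
\[
\int_\Omega f\ln\frac{f}{\bar f}\le \frac{1}{\bar f}\int_\Omega f(f-\bar f)=\frac{1}{\bar f}\int_\Omega(f-\bar f)^2,
\]
which is exactly the claimed $L^2$ bound. This half is essentially a one-line computation.

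The substantial part is the middle inequality, a Csisz\'ar--Kullback--Pinsker type lower bound on the relative entropy of $f$ against its mean. My plan is the following. First, use $\int_\Omega(f-\bar f)=0$ to rewrite
\[
\int_\Omega f\ln\frac{f}{\bar f}=\int_\Omega\Bigl(f\ln\frac{f}{\bar f}-f+\bar f\Bigr),
\]
so that the integrand becomes pointwise nonnegative. Next, apply a pointwise Pinsker-type scalar inequality such as $a\ln(a/b)-(a-b)\ge (a-b)^2/(2\max(a,b))$ for $a,b>0$ (or, equivalently, the logarithmic-mean inequality $(a-b)\ln(a/b)\ge 2(a-b)^2/(a+b)$) with $a=f$ and $b=\bar f$ to obtain
\[
\int_\Omega f\ln\frac{f}{\bar f}\ge \int_\Omega\frac{(f-\bar f)^2}{2\max(f,\bar f)}.
\]
Finally, close the argument with a weighted Cauchy--Schwarz inequality
\[
\|f-\bar f\|_{L^1}^2\le \Bigl(\int_\Omega\frac{(f-\bar f)^2}{\max(f,\bar f)}\Bigr)\Bigl(\int_\Omega\max(f,\bar f)\Bigr),
\]
combined with the trivial bound $\int_\Omega\max(f,\bar f)\le \int_\Omega(f+\bar f)=2|\Omega|\bar f$.

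The only non-routine step is to calibrate the pointwise Pinsker-type inequality and the weighting in Cauchy--Schwarz so as to exactly recover the constant asserted in the statement; this is where I expect the main (and minor) obstacle to lie, since the crudest version of the above chain tends to produce a constant of the form $c(|\Omega|)/\bar f$ rather than a clean $1/(2\bar f)$. A completely equivalent route that yields the sharp constant automatically is to rescale $f/(|\Omega|\bar f)$ to a probability density on $\Omega$ and invoke the classical Pinsker inequality $D_{\mathrm{KL}}(P\|Q)\ge \tfrac{1}{2}\|P-Q\|_{L^1}^2$, from which the stated bound follows after undoing the rescaling.
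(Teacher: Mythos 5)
The paper does not prove this lemma at all---it is quoted verbatim from \cite{JW-preprint}---so there is no internal proof to compare against; I assess your argument on its own. Your treatment of the two outer inequalities is correct: nonnegativity is trivial, and the bound $\ln y\le y-1$ together with $\int_\Omega(f-\bar f)=0$ gives the $L^2$ upper bound in one line, exactly as you say. For the middle (Csisz\'ar--Kullback--Pinsker) inequality your strategy is also the right one, and both of your routes are sound: the pointwise bound $a\ln(a/b)-(a-b)\ge (a-b)^2/(2\max(a,b))$ does hold, and the weighted Cauchy--Schwarz step is fine.

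The ``minor obstacle'' you flag, however, is not a calibration issue in your argument but an error in the target: the constant $\tfrac{1}{2\bar f}$ in the statement is wrong unless $|\Omega|\le 1$, and no correct proof can reach it. Your rescaling route, carried out carefully, gives $D_{\mathrm{KL}}\ge\tfrac12\bigl(\int|p-1|\,d\mu\bigr)^2$ with $d\mu=dx/|\Omega|$ and $p=f/\bar f$, which after undoing the normalization yields
\begin{equation*}
\int_\Omega f\ln\frac{f}{\bar f}\;\ge\;\frac{1}{2|\Omega|\,\bar f}\,\|f-\bar f\|_{L^1}^2,
\end{equation*}
not $\tfrac{1}{2\bar f}\|f-\bar f\|_{L^1}^2$ (your elementary route gives the same with $4$ in place of $2$). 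The extra $|\Omega|$ is unavoidable: the left side of the claimed middle inequality scales like $|\Omega|^2$ under dilation of the domain while the entropy scales like $|\Omega|$, and concretely, for $\Omega=(0,2)$ and $f=1+\varepsilon\sin(\pi x)$ one has $\int_\Omega f\ln(f/\bar f)=\tfrac{\varepsilon^2}{2}+O(\varepsilon^3)$ while $\tfrac{1}{2\bar f}\|f-\bar f\|_{L^1}^2=\tfrac{8\varepsilon^2}{\pi^2}\approx 0.81\,\varepsilon^2$, so the stated inequality fails. So you should prove (and the paper should quote) the version with $\tfrac{1}{2|\Omega|\bar f}$; this weaker form is all that is ever used here, namely nonnegativity in \eqref{GS-11*} and the $L^1$ decay \eqref{GS-5*}, where the constant is absorbed into $C_{10}$.
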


\section{Qualitative boundedness: Proof of Theorem \ref{GB}}
In this section, we are devoted to proving the qualitative boundedness in terms of model parameters as stated in Theorem \ref{GB}. To this end, we first use the convection effect to show the boundedness of $\|v\|_{L^\infty}$ based on some ideas in \cite{TW-NA-2021}.
\begin{lemma}\label{b-v}
Let $(u,v,w)$ be the solution of \eqref{system} obtained  from  Lemma \ref{LS}. Then
\begin{equation}\label{b-v1}
\begin{split}
   &\|v(\cdot,t)\|_{L^\infty}\\
   &\leq C_0\max\left\{m_1+\|v_0\|_{L^1},\frac{1}{\xi_2},  \|v_0\|_{L^\infty}\right\}\left(1
+m_1\xi_2+(\frac{1}{d})^{\frac{n}{2}}(m_1\xi_2)^{1+\frac{n}{2}}\right):=M_0,
\end{split}
\end{equation}
for all $t\in[0, T_{\text{max}})$; here $C_0>0$ is defined in \eqref{c0-formula}, which  depends only on $n$ and $\Omega$.
\end{lemma}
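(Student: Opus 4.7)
\medskip

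\noindent\textbf{Proof proposal.} The plan is to run a Moser iteration on the $v$-equation, after a key algebraic manipulation that uses the $w$-equation to recast the convection term into a shape that cooperates with the $u$-source term.

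First, I would test the $v$-equation with $pv^{p-1}$ for $p\geq 2$ and integrate by parts, yielding
\begin{equation*}
\frac{d}{dt}\int_\Omega v^p+\frac{4d(p-1)}{p}\int_\Omega |\nabla v^{p/2}|^2+p\int_\Omega v^p
=\xi_2(p-1)\int_\Omega v^{p-1}\nabla v\cdot(-\nabla w)\cdot p/(p-1)+p\int_\Omega uv^{p-1}.
\end{equation*}
The convection integral can be rewritten using $\int_\Omega v^{p-1}\nabla\cdot(v\nabla w)=-\tfrac{p-1}{p}\int_\Omega v^p\Delta w=\tfrac{p-1}{p}\int_\Omega v^p(u-\bar u)$, where I have used the third equation of \eqref{system} and the Neumann condition on $w$. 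This produces a favorable \emph{negative} term $-\xi_2(p-1)\int_\Omega uv^p$ together with the bounded benign term $\xi_2(p-1)\bar u\int_\Omega v^p$.

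Second, I would absorb the source term $p\int_\Omega uv^{p-1}$. Writing $uv^{p-1}=u^{1/p}\cdot(u^{(p-1)/p}v^{p-1})$ and applying Young's inequality (Lemma \ref{YI}) with exponents $p$ and $p/(p-1)$, with weight $\delta^{p/(p-1)}=\xi_2/2$, gives
\begin{equation*}
p\int_\Omega uv^{p-1}\leq\frac{\xi_2(p-1)}{2}\int_\Omega uv^p+\left(\frac{2}{\xi_2}\right)^{p-1}\int_\Omega u,
\end{equation*}
so that half of the negative convection contribution absorbs the source, while the residual is controlled by $(2/\xi_2)^{p-1}m_1$ via \eqref{b-uvw1}. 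Combined with $\bar u\leq m_1/|\Omega|$, this leaves the clean ODI
\begin{equation*}
\frac{d}{dt}\int_\Omega v^p+\frac{4d(p-1)}{p}\int_\Omega|\nabla v^{p/2}|^2+p\int_\Omega v^p\leq\frac{\xi_2(p-1)m_1}{|\Omega|}\int_\Omega v^p+\left(\frac{2}{\xi_2}\right)^{p-1}m_1.
\end{equation*}

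Third, I would feed this ODI into a standard Moser iteration on the sequence $p_k=p_0(1+2/n)^k$, using the Gagliardo-Nirenberg-type inequality $\|v^{p/2}\|_{L^{2+4/n}}^{2+4/n}\leq C\|\nabla v^{p/2}\|_{L^2}^2\|v^{p/2}\|_{L^1}^{4/n}+C\|v^{p/2}\|_{L^1}^{2+4/n}$ to pass from the $H^1$-gradient control to a higher $L^q$ norm of $v^{p/2}$. Iterating from the baseline $L^1$-bound \eqref{b-uvw3} (which contributes the $m_1+\|v_0\|_{L^1}$ factor in the outer maximum of \eqref{b-v1}), taking $k$-th roots and passing to the limit $k\to\infty$ yields a uniform $L^\infty$ bound. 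The initial datum contribution $\|v_0\|_{L^\infty}$ enters via the starting value of the iteration when an exponentially stable heat-type estimate is invoked on short time, and the factor $(2/\xi_2)^{p-1}$ collapses telescopically to a finite $1/\xi_2$ power.

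The main obstacle will be the careful bookkeeping of constants needed to match the explicit form \eqref{b-v1}. In particular, the dimensional scaling $(1/d)^{n/2}(m_1\xi_2)^{1+n/2}$ should emerge from the accumulated multiplicative constants in the Moser cascade — as is typical for bounds arising from heat-semigroup-type regularization — while the $1/\xi_2$ factor in the outer maximum compensates for the residual $(2/\xi_2)^{p-1}$ terms appearing along the iteration. A subsidiary technical point is to ensure the iteration handles both the regimes $\xi_2\ll 1$ and $\xi_2\gg 1$ uniformly, which the chosen Young splitting does by design.
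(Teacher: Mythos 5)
Your proposal follows essentially the same route as the paper: recasting the convection integral via $\Delta w=\bar u-u$ to produce the good term $-\xi_2(p-1)\int_\Omega uv^p$, absorbing $p\int_\Omega uv^{p-1}$ into it by Young's inequality at the cost of $\xi_2^{-(p-1)}m_1$ (the paper uses the full negative term rather than half, but this is immaterial), and then running a Moser iteration seeded by the $L^1$ bound \eqref{b-uvw3}. Two points in your third step need correction, though neither is fatal. First, the term $\frac{\xi_2(p-1)m_1}{|\Omega|}\int_\Omega v^p$ has a coefficient growing linearly in $p$ and proportional to $\xi_2 m_1$, so it cannot be dominated by $p\int_\Omega v^p$ on the left when $\xi_2 m_1$ is large; it must be absorbed by the gradient term via the $\varepsilon$-interpolation $\|U\|_{L^2}^2\leq\varepsilon\|\nabla U\|_{L^2}^2+c_1(1+\varepsilon^{-n/2})\|U\|_{L^1}^2$ with $U=v^{p/2}$ and $\varepsilon=\frac{4d|\Omega|}{p\xi_2 m_1}$. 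This is precisely where the factor $(\frac{1}{d})^{n/2}(m_1\xi_2)^{1+n/2}$ in \eqref{b-v1} originates — it does not come from ``accumulated multiplicative constants in the Moser cascade'' or from heat-semigroup regularization, and your sketch as written leaves no portion of the gradient term available for this absorption if all of it is spent on the integrability gain $p\mapsto p(1+\frac{2}{n})$. The paper avoids this tension by spending the gradient term entirely on absorption; the recursion from $L^{p/2}$ to $L^p$ then comes from the residual $(\int_\Omega v^{p/2})^2$, and the iteration doubles $p=2^j$. Second, $\|v_0\|_{L^\infty}$ enters simply by integrating the resulting ODI against the factor $e^{pt}$, so $\int_\Omega v_0^p\leq|\Omega|\|v_0\|_{L^\infty}^p$ appears directly; no short-time semigroup estimate is needed. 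With these adjustments your argument closes and reproduces \eqref{b-v1}.
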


\begin{proof}
For $p>1$, multiplying the second equation of \eqref{system} by $v^{p-1}$, integrating the result over $\Omega$ by parts and combining the equation $\Delta w=-u+\bar{u}$, we obtain that
\begin{align*}
    \frac{1}{p}\frac{d}{dt}\int_\Omega v^p&=d\int_\Omega\Delta v\cdot v^{p-1}+\xi_2\int_\Omega \nabla\cdot(v\nabla w)v^{p-1}+\int_\Omega uv^{p-1}-\int_\Omega v^p\\
    &=-(p-1)d\int_\Omega v^{p-2}|\nabla v|^2-\xi_2(p-1)\int_\Omega v^{p-1}\nabla v\cdot\nabla w+\int_\Omega uv^{p-1}-\int_\Omega v^p\\
    &=-(p-1)d\int_\Omega v^{p-2}|\nabla v|^2+\frac{\xi_2(p -1)}{p}\int_\Omega v^p\Delta w+\int_\Omega uv^{p-1}-\int_\Omega v^p\\
    &=-(p-1)d\int_\Omega v^{p-2}|\nabla v|^2+\frac{\xi_2(p-1)}{p}\int_\Omega v^p(-u+\bar{u})+\int_\Omega uv^{p-1}-\int_\Omega v^p,
\end{align*}
which upon the fact $v^{p-2}|\nabla v|^2=\frac{4}{p^2}|\nabla v^{\frac{p}{2}}|^2$ gives
\begin{equation}\label{b-v2}
\begin{split}
    &\frac{d}{dt}\int_\Omega v^p+\frac{4d(p-1)}{p}\int_\Omega|\nabla v^{\frac{p}{2}}|^2+\xi_2(p-1)\int_\Omega uv^p+p\int_\Omega v^p\\
   & =\xi_2(p-1)\bar{u}\int_\Omega v^p+p\int_\Omega uv^{p-1}.
    \end{split}
\end{equation}
Applying the Young's inequality with $\varepsilon$ (cf. Lemma \ref{YI}) and  the facts $\|u(\cdot,t)\|_{L^1}\leq m_1$ and $\bar{u}=\frac{1}{|\Omega|}\int_\Omega u\leq \frac{m_1}{|\Omega|}$, we infer
\begin{equation}\label{b-v3}
    \begin{split}
       p \int_\Omega uv^{p-1}
        &\leq \xi_2(p-1)\int_\Omega uv^p+\xi_2^{-(p-1)}\int_\Omega u\\
        &\leq \xi_2(p-1)\int_\Omega uv^p+m_1\xi_2^{1-p},
    \end{split}
\end{equation}
and
\begin{equation}\label{b-v4}
\xi_2(p-1)\bar{u}\int_\Omega  v^p\leq \frac{\xi_2m_1(p-1)}{|\Omega|}\int_\Omega v^p.
\end{equation}
A substitution of  \eqref{b-v3} and \eqref{b-v4} into \eqref{b-v2} shows that
\begin{equation}\label{b-v5}
\begin{split}
    &\frac{d}{dt}\int_\Omega v^p+\frac{4d(p-1)}{p}\int_\Omega|\nabla v^{\frac{p}{2}}|^2+p\int_\Omega v^p\leq \frac{\xi_2m_1(p-1)}{|\Omega|}\int_\Omega v^p+m_1\xi_2^{1-p}.
    \end{split}
\end{equation}
For $\varepsilon>0$, there exists $c_1>0$ depending only on $n$ and $\Omega$ such that (cf. \cite{TW-M3AS-2013, Xiangjde})
\begin{equation}\label{b-v5*}
\|U\|_{L^2}^2\leq \varepsilon\|\nabla U\|_{L^2}^2+c_1(1+\varepsilon^{-\frac{n}{2}})\|U\|_{L^1}^2.
\end{equation}
 We choose  $U=v^{\frac{p}{2}}$ and $\varepsilon=\frac{4d|\Omega|}{p\xi_2 m_1}$ in \eqref{b-v5*} to derive that
\begin{equation}\label{b-v6}
\begin{split}
 &\frac{\xi_2m_1(p-1)}{|\Omega|}\int_\Omega v^p\\
 &\leq \frac{4(p-1)d}{p}\int_\Omega|\nabla v^{\frac{p}{2}}|^2+\frac{\xi_2m_1 c_1(p-1)}{|\Omega|}\left(1+\left(\frac{\xi_2m_1p}{4d|\Omega|}
 \right)^\frac{n}{2}\right)\left(\int_\Omega v^{\frac{p}{2}}\right)^2\\
 &\leq \frac{4(p-1)d}{p}\int_\Omega|\nabla v^{\frac{p}{2}}|^2+c_2p(1+p^\frac{n}{2})\left(\int_\Omega v^{\frac{p}{2}}\right)^2,
 \end{split}
\end{equation}
where $c_2:=\frac{\xi_2m_1 c_1}{|\Omega|}\max\{1,\left(\frac{\xi_2m_1}{4d|\Omega|}
 \right)^\frac{n}{2}\}$. Then substituting \eqref{b-v6} into \eqref{b-v5} and noting the fact $(1+p^\frac{n}{2})\leq 2(1+p)^\frac{n}{2}$, we obtain
\begin{equation*}
\begin{split}
    &\frac{d}{dt}\int_\Omega v^p+p\int_\Omega v^p\leq 2c_2p(1+p)^\frac{n}{2}\left(\int_\Omega v^{\frac{p}{2}}\right)^2+m_1\xi_2^{1-p},
    \end{split}
\end{equation*}
which implies that
\begin{equation}\label{b-v7}
\begin{split}
 \frac{d}{dt}\left(e^{pt}\int_\Omega v^p\right)\leq 2e^{pt}c_2p(1+p)^\frac{n}{2}\left(\int_\Omega v^{\frac{p}{2}}\right)^2+ e^{pt}m_1\xi_2^{1-p}.
 \end{split}
\end{equation}
For any $T\in(0, T_{\text{max}})$, we integrate \eqref{b-v7} over  $[0,t]$ for $0\leq t\leq T$ to obtain
\begin{equation*}
    \begin{split}
        \int_\Omega v^p(x,t)
        &\leq\int_\Omega v_0^p+2c_2(1+p)^\frac{n}{2}\sup_{0\leq t\leq T}\left(\int_\Omega v^{\frac{p}{2}}(x,t)\right)^2+\frac{m_1\xi_2^{1-p}}{p},
    \end{split}
\end{equation*}
{\color{black}
which immediately yields
\begin{align*}
    &\ \ \ \ \left(\int_\Omega v^p(x,t) \right)^\frac{1}{p} \nonumber\\
    &\leq \left[2c_2(1+p)^{\frac{n}{2}}\sup_{0\leq t\leq T}\left(\int_\Omega v^{\frac{p}{2}}(x,t)\right)^2+m_1\xi_2^{1-p}+|\Omega|\|v_0\|_{L^\infty}^p\right]^\frac{1}{p}  \nonumber\\
    &\leq \left(\max\left\{2c_2,m_1\xi_2,|\Omega|\right\}\right)^\frac{1}{p}(1+p)^{\frac{n}{2p}}\left\{\sup_{0\leq t\leq T}\left(\int_\Omega v^{\frac{p}{2}}(x,t)\right)^\frac{2}{p}+\frac{1}{\xi_2}+\|v_0\|_{L^\infty}\right\}.
\end{align*}
Therefore,  it follows with $c_3:=3\max\{2c_2,m_1\xi_2,|\Omega|, 1\}$ that
\begin{equation}\label{b-v9***}
   \begin{split}
    &\ \ \ \ \max\left\{\sup_{0\leq t\leq T}\left(\int_\Omega v^p(x,t)\right)^\frac{1}{p}, \ \ \frac{1}{\xi_2}, \ \ \|v_0\|_{L^\infty}\right\}\\
    &\leq c_3^\frac{1}{p}(1+p)^{\frac{n}{2p}}\max\left\{\sup_{0\leq t\leq T}\left(\int_\Omega v^{\frac{p}{2}}(x,t)\right)^\frac{2}{p}, \ \ \frac{1}{\xi_2}, \ \ \|v_0\|_{L^\infty}\right\}.
   \end{split}
\end{equation}
Upon setting
\[
H(p):=\max\left\{\sup_{0\leq t\leq T}\left(\int_\Omega v^p(x,t)\right)^{\frac{1}{p}}, \ \ \frac{1}{\xi_2}, \ \ \|v_0\|_{L^\infty}\right\},
\]
then  \eqref{b-v9***} becomes
\[
    H(p)\leq c_3^{\frac{1}{p}}(1+p)^{\frac{n}{2p}}H(\frac{p}{2}), \ \ \ \forall p\geq 2.
\]
Taking $p=2^j~(j=1,2,\cdots)$, we obtain inductively that
\begin{equation}
\begin{split}\label{b-v11}
H(2^j)&\leq c_3^{2^{-j}}(1+2^j)^{\frac{n}{2}\cdot 2^{-j}}H(2^{j-1}) \\
&\leq c_3^{2^{-j}+2^{-(j-1)}}(1+2^j)^{\frac{n}{2}\cdot 2^{-j}}(1+2^{j-1})^{\frac{n}{2}\cdot 2^{-(j-1)}}H(2^{j-2})   \\
&\leq c_3^{\sum\limits_{i=0}^22^{-(j-i)}}\cdot\prod_{k=j-2}^j(1+2^k)^{\frac{1}{2^k}\cdot \frac{n}{2}} H(2^{j-3})   \\
&\cdots\cdots  \\
&\leq c_3^{\sum\limits_{i=0}^{j-1}2^{-(j-i)}}\cdot\prod_{k=1}^j(1+2^k)^{\frac{1}{2^k}\cdot \frac{n}{2}}H(1)   \\
&=c_3^{1-\frac{1}{2^j}}\prod_{k=1}^j(1+2^k)^{\frac{1}{2^k}\cdot \frac{n}{2}}H(1).
\end{split}
\end{equation}
 On the one hand, using the fact that $\ln(1+z)\leq \sqrt{z}$ for all $z\geq0$, we have
$$
\ln\prod_{k=1}^j(1+2^k)^{\frac{1}{2^k}}=\sum_{k=1}^j\frac{\ln(1+2^k)}{2^k}\leq \sum_{k=1}^\infty\left(\frac{1}{\sqrt{2}}\right)^k\leq 6,\ \forall j=1,2,\cdots,
$$
and so,
\begin{equation}\label{b-v12}
    \lim_{j\to\infty}\prod_{k=1}^j(1+2^k)^{\frac{1}{2^k}\cdot \frac{n}{2}}\leq e^{3n}.
\end{equation}
 Then the combination of \eqref{b-v11}, \eqref{b-v12} and \eqref{b-uvw3} with $c_2:=\frac{\xi_2m_1c_1}{|\Omega|}\max\{1,(\frac{\xi_2m_1}{4d|\Omega|})^{\frac{n}{2}}\}$ gives
\begin{equation}\label{v-linfty-bdd}
\begin{split}
\|v(\cdot,t)\|_{L^\infty}&\leq \lim_{j\to\infty} H(2^j)\\
&\leq c_3e^{3n}H(1)\\
&=3\max\{2c_2,m_1\xi_2,|\Omega|,1\}e^{3n}\max\left\{\sup_{0\leq t\leq T}\int_\Omega v(\cdot,t), \ \frac{1}{\xi_2}, \ \|v_0\|_{L^\infty}\right\}\\
&\leq 3e^{3n}\left(1+|\Omega|+m_1\xi_2+2c_2\right)\max\left\{ m_1+\ \|v_0\|_{L^1},\frac{1}{\xi_2}, \ \|v_0\|_{L^\infty}\right\}\\
&\leq C_0\max\left\{  m_1+\|v_0\|_{L^1}, \frac{1}{\xi_2},  \|v_0\|_{L^\infty}\right\}\left((1
+m_1\xi_2+(\frac{1}{d})^{\frac{n}{2}}(m_1\xi_2)^{1+\frac{n}{2}}\right)
\end{split}
\end{equation}
for all $t\in[0,T]$, where \begin{equation}\label{c0-formula}
C_0:=3e^{3n}\max\left\{1+|\Omega|,1+\frac{2c_1}{|\Omega|},
\frac{2c_1}{(4|\Omega|)^{\frac{n}{2}}|\Omega|}\right\}.
\end{equation}
Since $T\in(0, T_{\text{max}})$ is arbitrary and the upper bound in \eqref{v-linfty-bdd} is independent of $T$, the desired estimate \eqref{b-v1} follows from \eqref{v-linfty-bdd} and \eqref{c0-formula}.}
\end{proof}

\subsection{Qualitative $L^p$-estimates} In this subsection, by means of the key estimate on $\|v\|_{L^\infty}$ in \eqref{v-linfty-bdd}, we shall establish further coupled $L^p$-energy estimates with dependence on key parameters. Here, we shall stress that our arguments are applicable to both cases: $a,\mu>0$ and $a=\mu=0$.
\begin{lemma}\label{cee}
 For $p>1$, the local-in-time solution $(u,v,w)$  of \eqref{system} obtained in Lemma \ref{LS} satisfies
\begin{equation}\label{cee-1}
\begin{split}
    &\frac{d}{dt}\int_\Omega u^p+\frac{p(p-1)}{2}\int_\Omega u^{p-2}|\nabla u|^2+\frac{\xi_1(p-1)}{2}
    \int_\Omega u^{p+1}+\frac{p\mu}{2} \int_\Omega u^{\theta+p} \\
    &\leq \frac{\chi^2p(p-1)}{2}\int_\Omega u^p|\nabla v|^2+M_1(p),\ \ \mathrm{for} \ \ t\in(0,T_{max}),
\end{split}
\end{equation}
where
\begin{equation}\label{M1}
M_1(p):=
\begin{cases}
\xi_1m_1^{p+1}\frac{(p-1)}{p+1}\cdot
\left(\frac{2p}{|\Omega|(p+1)}\right)^p+\left(\frac{1}{\mu}\right)^{\frac{p}{\theta}}\left(\frac{2ap}{p+\theta}\right)^{\frac{p}{\theta}}\frac{ap\theta}{p+\theta}|\Omega|, & {\color{black} a, \mu>0,  }\\[0.25cm]
\xi_1m_1^{p+1}\frac{(p-1)}{p+1}\cdot
\left(\frac{2p}{|\Omega|(p+1)}\right)^p, &  a=\mu=0.
\end{cases}
\end{equation}
\end{lemma}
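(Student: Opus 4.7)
The plan is the standard $L^p$-testing procedure on the $u$-equation combined with the identity $\Delta w=-u+\bar u$ from the third equation, followed by two applications of Young's inequality with $\varepsilon$ to produce the stated coefficients.

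First I would multiply the first equation of \eqref{system} by $pu^{p-1}$, integrate over $\Omega$, and apply integration by parts using the Neumann boundary conditions. The diffusion term yields $-p(p-1)\int_\Omega u^{p-2}|\nabla u|^2$; the chemotaxis term produces $\chi p(p-1)\int_\Omega u^{p-1}\nabla u\cdot\nabla v$; the repulsion term, after integrating by parts once and then using the pointwise identity $\Delta w=-u+\bar u$ from the elliptic equation for $w$, becomes $-\xi_1(p-1)\int_\Omega u^{p+1}+\xi_1(p-1)\bar u\int_\Omega u^p$; and the kinetic term gives $pa\int_\Omega u^p-p\mu\int_\Omega u^{p+\theta}$. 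Collecting these, one arrives at the raw identity
\begin{align*}
&\frac{d}{dt}\int_\Omega u^p+p(p-1)\int_\Omega u^{p-2}|\nabla u|^2+\xi_1(p-1)\int_\Omega u^{p+1}+p\mu\int_\Omega u^{p+\theta}\\
&\qquad=\chi p(p-1)\int_\Omega u^{p-1}\nabla u\cdot\nabla v+\xi_1(p-1)\bar u\int_\Omega u^p+pa\int_\Omega u^p.
\end{align*}

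Next I would estimate the three right-hand side terms. For the chemotaxis contribution, Young's inequality of the form $\chi p(p-1)u^{p-1}|\nabla u||\nabla v|\leq\tfrac{p(p-1)}{2}u^{p-2}|\nabla u|^2+\tfrac{\chi^2p(p-1)}{2}u^p|\nabla v|^2$ absorbs exactly half of the diffusion term and leaves the desired $\frac{\chi^2p(p-1)}{2}\int_\Omega u^p|\nabla v|^2$ on the right. For the boundary-mass term, I use $\bar u\leq m_1/|\Omega|$ from \eqref{b-uvw1} and then Young's inequality with exponents $\tfrac{p+1}{p}$ and $p+1$ to split $u^p\leq\tfrac{p\delta^{(p+1)/p}}{p+1}u^{p+1}+\tfrac{1}{(p+1)\delta^{p+1}}$; choosing $\delta$ so that the $u^{p+1}$-coefficient equals half of $\xi_1(p-1)$ exactly reproduces the first line of $M_1(p)$ in \eqref{M1}. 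In the case $\mu>0$, the same trick with exponents $\tfrac{p+\theta}{p}$ and $\tfrac{p+\theta}{\theta}$ applied to $pa\int_\Omega u^p$ absorbs half of $p\mu\int_\Omega u^{p+\theta}$ and leaves precisely the constant $(\tfrac{1}{\mu})^{p/\theta}(\tfrac{2ap}{p+\theta})^{p/\theta}\tfrac{ap\theta}{p+\theta}|\Omega|$; when $a=\mu=0$ this term is simply zero.

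There is no serious obstacle here: the argument is a calibrated bookkeeping of three Young's inequalities. The only point requiring a bit of care is matching the optimal $\delta$ in each Young step to the specific coefficients $\tfrac{p(p-1)}{2}$, $\tfrac{\xi_1(p-1)}{2}$, and $\tfrac{p\mu}{2}$ appearing on the left of \eqref{cee-1}, so that the residual constants come out in the closed form stated in \eqref{M1}. After this calibration, summing the three estimates and rearranging yields \eqref{cee-1}.
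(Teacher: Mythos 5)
Your proposal is correct and follows essentially the same route as the paper: test with $u^{p-1}$ (up to the factor $p$), use $\Delta w=\bar u-u$ to convert the repulsion term into the good $-\xi_1(p-1)\int_\Omega u^{p+1}$ plus the $\bar u\int_\Omega u^p$ remainder, and then apply the same three calibrated Young inequalities, whose residual constants indeed reproduce \eqref{M1}.
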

\begin{proof}
For $p>1$, multiplying the first equation of \eqref{system} by $u^{p-1}$ and  integrating the result over $\Omega$ by parts, we conclude that
\begin{align*}
        &\frac{1}{p}\frac{d}{dt}\int_\Omega u^p+(p-1)\int_\Omega u^{p-2}|\nabla u|^2+\mu\int_\Omega u^{\theta+p}\\
        &=\chi(p-1)\int_\Omega u^{p-1}\nabla u\cdot\nabla v-\xi_1(p-1)\int_\Omega u^{p-1}\nabla u\cdot\nabla w+a\int_\Omega u^p\\
        &=\chi(p-1)\int_\Omega u^{p-1}\nabla u\cdot\nabla v+\frac{\xi_1(p-1)}{p}\int_\Omega  u^p\Delta w+a\int_\Omega u^p,
\end{align*}
which together with the fact $\Delta w=\bar{u}-u$ gives
\begin{equation}\label{cee-2}
    \begin{split}
        &\frac{1}{p}\frac{d}{dt}\int_\Omega u^p+(p-1)\int_\Omega u^{p-2}|\nabla u|^2+\mu\int_\Omega u^{\theta+p}+\frac{\xi_1(p-1)}{p}\int_\Omega  u^{p+1}\\
        =~&\chi(p-1)\int_\Omega u^{p-1}\nabla u\cdot\nabla v+\frac{\xi_1(p-1)}{p}\bar{u}\int_\Omega  u^p+a\int_\Omega u^p.
    \end{split}
\end{equation}
Applications of  H\"{o}lder's inequality and Young's inequality yield that
\begin{equation}\label{cee-3}
    \begin{split}
       \chi(p-1)\int_\Omega u^{p-1}\nabla u\cdot\nabla v
       &\leq\frac{(p-1)}{2}\int_\Omega u^{p-2}|\nabla u|^2+\frac{\chi^2(p-1)}{2}\int_\Omega u^p|\nabla v|^2,
    \end{split}
\end{equation}
and noting from  $\bar{u}=\frac{1}{|\Omega|}\int_\Omega u\leq \frac{m_1}{|\Omega|}$ that
\begin{equation}\label{cee-4}
    \begin{split}
        \frac{\xi_1(p-1)}{p}\bar{u}\int_\Omega u^p        &\leq\frac{\xi_1(p-1)m_1}{p|\Omega|}\int_\Omega u^{p}\\
        &\leq\frac{\xi_1(p-1)}{2p}\int_\Omega u^{p+1}+{\color{black}\xi_1m_1^{p+1}\frac{(p-1)}{p(p+1)}\left(\frac{2p}{|\Omega|(p+1)}\right)^p,  }
    \end{split}
\end{equation}
 as well as, in the case of $a, \mu>0$,
\begin{equation}\label{cee-5}
    \begin{split}
         a\int_\Omega u^p
          &\leq
          \frac{\mu}{2}\int_\Omega u^{p+\theta}
          +\left(\frac{2ap}{\mu(p+\theta)}\right)^{\frac{p}{\theta}}
          \frac{{\color{black}a}\theta}{p+\theta}|\Omega|.\\
    \end{split}
\end{equation}
Substituting \eqref{cee-3}, \eqref{cee-4} and \eqref{cee-5} into \eqref{cee-2} and noting the definition of $M_1(p)$ in \eqref{M1}, we end up with  \eqref{cee-1} directly, thus proving this lemma.
\end{proof}

\begin{lemma}\label{reg-v*}
 For $k\geq1$, the local-in-time classical  solution of \eqref{system} obtained in Lemma \ref{LS} satisfies, for $t\in[0, T_{\text{max}})$, that
\begin{equation}\label{Key1}
    \begin{split}
        &\frac{d}{dt}\int_\Omega|\nabla v|^{2k}+2k\int_\Omega|\nabla v|^{2k}+2kd\int_\Omega|\nabla v|^{2k-2}|D^2v|^2\\
        &\ \ \ +k(k-1)d\int_\Omega|\nabla v|^{2k-4}|\nabla|\nabla v|^2|^2\\
        &\leq 2k\xi_2\int_\Omega|\nabla v|^{2k-2}\nabla v\cdot\nabla(\nabla v\cdot\nabla w)-2k\xi_2\int_\Omega v\Delta w\nabla\cdot(|\nabla v|^{2k-2}\nabla v)\\
        &\ \ \ \ +2k(2(k-1)+\sqrt{n})\int_\Omega u|\nabla v|^{2k-2}|D^2v|+dk\int_{\partial\Omega}|\nabla v|^{2k-2}\frac{\partial|\nabla v|^2}{\partial\nu}.
    \end{split}
\end{equation}
\end{lemma}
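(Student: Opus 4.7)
The plan is to differentiate $\int_\Omega |\nabla v|^{2k}$ in time and substitute the $v$-equation from \eqref{system}. Starting from
\[
\frac{d}{dt}\int_\Omega |\nabla v|^{2k}= 2k\int_\Omega |\nabla v|^{2k-2}\nabla v\cdot\nabla v_t,
\]
plugging in $v_t=d\Delta v+\xi_2\nabla\cdot(v\nabla w)+u-v$ splits the right-hand side into four contributions coming from diffusion, convection, the $u$-source, and the linear decay $-v$, each of which I will manipulate into one of the four desired terms.

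\textbf{The diffusion piece.} For $2kd\int_\Omega |\nabla v|^{2k-2}\nabla v\cdot\nabla\Delta v$ I would invoke the Bochner identity $\nabla v\cdot\nabla\Delta v=\tfrac12\Delta|\nabla v|^2-|D^2v|^2$. Integrating by parts in the $\Delta|\nabla v|^2$ contribution, using $\nabla(|\nabla v|^{2k-2})=(k-1)|\nabla v|^{2k-4}\nabla|\nabla v|^2$, produces (a) the boundary term $dk\int_{\partial\Omega}|\nabla v|^{2k-2}\frac{\partial|\nabla v|^2}{\partial\nu}$, kept on the RHS because $\Omega$ need not be convex, (b) the Hessian dissipation $-2kd\int_\Omega |\nabla v|^{2k-2}|D^2v|^2$, and (c) the extra good term $-k(k-1)d\int_\Omega|\nabla v|^{2k-4}\bigl|\nabla|\nabla v|^2\bigr|^2$; transferring (b) and (c) to the left-hand side supplies the two claimed dissipation terms.

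\textbf{The remaining three pieces.} For the convection contribution I would expand $\nabla\cdot(v\nabla w)=\nabla v\cdot\nabla w+v\Delta w$; the first summand supplies $2k\xi_2\int_\Omega|\nabla v|^{2k-2}\nabla v\cdot\nabla(\nabla v\cdot\nabla w)$ directly, while for the second I would integrate $2k\xi_2\int_\Omega|\nabla v|^{2k-2}\nabla v\cdot\nabla(v\Delta w)$ by parts, using $\frac{\partial v}{\partial\nu}=0$ to kill the boundary, yielding $-2k\xi_2\int_\Omega v\Delta w\,\nabla\cdot(|\nabla v|^{2k-2}\nabla v)$. For the $u$-piece, integration by parts (again using $\frac{\partial v}{\partial\nu}=0$) gives $-2k\int_\Omega u\,\nabla\cdot(|\nabla v|^{2k-2}\nabla v)$, and the pointwise estimates $|\Delta v|\leq\sqrt n\,|D^2v|$ together with $\bigl|\nabla|\nabla v|^2\bigr|\leq 2|\nabla v||D^2v|$ give $\bigl|\nabla\cdot(|\nabla v|^{2k-2}\nabla v)\bigr|\leq(2(k-1)+\sqrt n)|\nabla v|^{2k-2}|D^2v|$, which matches the third RHS term. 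Finally the $-v$ contribution produces exactly $-2k\int_\Omega|\nabla v|^{2k}$, which I move to the LHS as $+2k\int_\Omega|\nabla v|^{2k}$.

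\textbf{Main obstacle.} The only non-routine subtlety is that $\Omega$ is not assumed convex, so the boundary contribution $\int_{\partial\Omega}|\nabla v|^{2k-2}\frac{\partial|\nabla v|^2}{\partial\nu}$ must genuinely be carried along (on convex domains it would have a good sign via $\frac{\partial|\nabla v|^2}{\partial\nu}\leq 0$). This is the origin of the factor $d_\Omega$ appearing in \eqref{d-omega} and in the later estimate \eqref{gradw-infty-bdd}; at the present stage of the argument it simply remains on the RHS and will be absorbed later via a trace inequality.
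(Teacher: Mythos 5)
Your proposal is correct and follows essentially the same route as the paper: the same time-differentiation and substitution of the $v$-equation, the same Bochner identity and integration by parts for the diffusion piece (retaining the boundary integral since $\Omega$ need not be convex), the same expansion $\nabla\cdot(v\nabla w)=\nabla v\cdot\nabla w+v\Delta w$ for the convection piece, and the same pointwise bounds $|\Delta v|\leq\sqrt{n}|D^2v|$ and $\bigl|\nabla|\nabla v|^2\bigr|\leq 2|\nabla v||D^2v|$ for the $u$-source term. No gaps.
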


\begin{proof}
For $k\geq 1$, differentiating the second equation of \eqref{system} and multiplying the result by $|\nabla v|^{2k-2}\nabla v$, we obtain via integration by parts that
\begin{align}\label{reg-v2}
        \frac{1}{2k}\frac{d}{dt}\int_\Omega|\nabla v|^{2k}&=\int_\Omega|\nabla v|^{2k-2}\nabla v\cdot\nabla(d\Delta v+\xi_2\nabla\cdot(v\nabla w)-v+u)\nonumber\\
        &=d\int_\Omega|\nabla v|^{2k-2}\nabla v\cdot\nabla\Delta v+\xi_2\int_\Omega|\nabla v|^{2k-2}\nabla v\cdot\nabla(\nabla\cdot(v\nabla w))\\
        &\;\;\;+\int_\Omega|\nabla v|^{2k-2}\nabla v\cdot\nabla u-\int_\Omega|\nabla v|^{2k} \nonumber\\
        &=:I_1+I_2+I_3-\int_\Omega|\nabla v|^{2k}.\nonumber
\end{align}
Using the identity $\nabla v\cdot\nabla\Delta v=\frac{1}{2}\Delta|\nabla v|^2-|D^2v|^2$ and integrating by parts, we compute
\begin{equation}\label{reg-v3}
\begin{split}
        I_1&=\frac{d}{2}\int_\Omega|\nabla v|^{2k-2}\Delta|\nabla v|^2-d\int_\Omega|\nabla v|^{2k-2}|D^2v|^2\\
        &=\frac{d}{2}\int_{\partial\Omega}|\nabla v|^{2k-2}\frac{\partial|\nabla v|^2}{\partial\nu}-\frac{(k-1)d}{2}\int_\Omega|\nabla v|^{2k-4}|\nabla|\nabla v|^2|^2\\
        & \ \ \ -d\int_\Omega|\nabla v|^{2k-2}|D^2v|^2.
\end{split}
\end{equation}
Similarly, we use integration by parts to rewrite $I_2$ as follows:
\begin{align}\label{reg-v4}
        I_2&=\xi_2\int_\Omega|\nabla v|^{2k-2}\nabla v\cdot\nabla(\nabla v\cdot\nabla w+v\Delta w)\\
        &=\xi_2\int_\Omega|\nabla v|^{2k-2}\nabla v\cdot\nabla(\nabla v\cdot\nabla w)+\xi_2\int_\Omega|\nabla v|^{2k-2}\nabla v\cdot\nabla(v\Delta w)\nonumber\\
        &=\xi_2\int_\Omega|\nabla v|^{2k-2}\nabla v\cdot\nabla(\nabla v\cdot\nabla w)-\xi_2\int_\Omega v\Delta w\nabla\cdot(|\nabla v|^{2k-2}\nabla v). \nonumber
\end{align}
As for $I_3$, using  the fact $|\Delta v|\leq\sqrt{n}|D^2v|$ and the identity \eqref{e1}, we have
\begin{align}\label{reg-v5}
        I_3&=\int_\Omega|\nabla v|^{2k-2}\nabla v\cdot\nabla u\nonumber\\
        &=-\int_\Omega u\nabla|\nabla v|^{2k-2}\cdot\nabla v-\int_\Omega u|\nabla v|^{2k-2}\Delta v\\
        &=-2(k-1)\int_\Omega u|\nabla v|^{2k-4}\nabla v\cdot (D^2v\cdot\nabla v)-\int_\Omega u|\nabla v|^{2k-2}\Delta v\nonumber\\
        &\leq (2(k-1)+\sqrt{n})\int_\Omega u|\nabla v|^{2k-2}|D^2v|. \nonumber
\end{align}
A substitution of  \eqref{reg-v3}, \eqref{reg-v4} and \eqref{reg-v5} into \eqref{reg-v2} shows  \eqref{Key1}.
\end{proof}
Next, we use the $L^\infty$-bound of $v$ provided by Lemma \ref{b-v} to control the terms on the right-hand side of \eqref{Key1}.
\begin{lemma}\label{ve}For $k\geq1$, the local-in-time classical  solution of \eqref{system} obtained in Lemma \ref{LS} satisfies, for $t\in[0, T_{\text{max}})$, that
\begin{equation}\label{ve-1}
    \begin{split}
        &\frac{d}{dt}\int_\Omega|\nabla v|^{2k}+2k\int_\Omega|\nabla v|^{2k}+\frac{k}{2}(3d-d_\Omega)\int_\Omega|\nabla v|^{2k-2}|D^2v|^2\\
        &\ \ \ +k(k-1)(d-d_\Omega)\int_\Omega|\nabla v|^{2k-4}|\nabla|\nabla v|^2|^2\\
        &\leq  \frac{C_1}{d^k}(1+M_0\xi_2)^{k+1}M_0^{k-1}\int_\Omega u^{k+1} +C_1d_\Omega M_0^{2k},
    \end{split}
\end{equation}
where   $C_1>0$  depends only on $n,k, u_0, v_0$ and $\Omega$ and $d_\Omega$ is defined by \eqref{d-omega}.
\end{lemma}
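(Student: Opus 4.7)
The plan is to estimate each of the four right-hand side terms of \eqref{Key1} and absorb every $|D^2v|$-dependent contribution into the dissipation $2kd\int_\Omega|\nabla v|^{2k-2}|D^2v|^2$ already present on the left. The main tools are the three interpolation identities \eqref{inter-1}--\eqref{inter-3} of Lemma \ref{abs-ineq} applied with $g=v$, $h=w$, $p=k$; the $L^{k+1}$-elliptic regularity for $\Delta w=\bar u-u$ under homogeneous Neumann data with mean zero, which delivers $\|D^2w\|_{L^{k+1}}+\|\Delta w\|_{L^{k+1}}\leq C\|u\|_{L^{k+1}}$; and the a priori bound $\|v\|_{L^\infty}\leq M_0$ from Lemma \ref{b-v}.

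For the first right-hand side integral of \eqref{Key1}, applying \eqref{inter-1} together with the elliptic bound on $D^2w$ controls it by a constant multiple of $\xi_2\|\nabla v\|_{L^{2(k+1)}}^{2k}\|u\|_{L^{k+1}}$; analogously, \eqref{inter-2} combined with the bound on $\Delta w$ and with $\|v\|_{L^\infty}\leq M_0$ controls the second by a constant times $\xi_2 M_0\|\nabla v\|_{L^{2(k+1)}}^{k-1}\|u\|_{L^{k+1}}\bigl(\int_\Omega|\nabla v|^{2k-2}|D^2v|^2\bigr)^{1/2}$. Using \eqref{inter-3} to trade $\|\nabla v\|_{L^{2(k+1)}}^{2(k+1)}$ for $CM_0^2\int_\Omega|\nabla v|^{2k-2}|D^2v|^2$ rewrites both expressions in the common shape $C\xi_2 M_0^{2k/(k+1)}\|u\|_{L^{k+1}}\bigl(\int_\Omega|\nabla v|^{2k-2}|D^2v|^2\bigr)^{k/(k+1)}$. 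The third right-hand side integral $\int_\Omega u|\nabla v|^{2k-2}|D^2v|$ is reshaped in parallel fashion via Cauchy--Schwarz, H\"older and \eqref{inter-3}. Young's inequality with conjugate exponents $\tfrac{k+1}{k}$ and $k+1$, applied with a parameter of size comparable to $d$, then separates each of these three hybrid quantities into a small multiple of $\int_\Omega|\nabla v|^{2k-2}|D^2v|^2$, absorbed into the LHS at a total cost of $\tfrac{kd}{2}$ out of the available $2kd$, plus residues of the form $Cd^{-k}\bigl(\xi_2^{k+1}M_0^{2k}+M_0^{k-1}\bigr)\int_\Omega u^{k+1}$, which combine into the announced term $\tfrac{C_1}{d^k}(1+M_0\xi_2)^{k+1}M_0^{k-1}\int_\Omega u^{k+1}$.

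The most delicate step, and the main technical obstacle, is the boundary integral $dk\int_{\partial\Omega}|\nabla v|^{2k-2}\partial_\nu|\nabla v|^2$. When $\Omega$ is convex, the classical computation exploiting $\partial_\nu v=0$ and the non-negativity of the second fundamental form of $\partial\Omega$ gives $\partial_\nu|\nabla v|^2\leq 0$ on $\partial\Omega$, so this integral is non-positive and may be discarded; this yields $d_\Omega=0$ and leaves the coefficients $\tfrac{3kd}{2}$ and $k(k-1)d$ intact on the LHS. In the non-convex case, I would invoke the pointwise bound $\partial_\nu|\nabla v|^2\leq C_\Omega|\nabla v|^2$ on $\partial\Omega$ to reduce matters to estimating $dk\int_{\partial\Omega}|\nabla v|^{2k}$; a trace embedding followed by an $\varepsilon$-Young argument and a further application of \eqref{inter-3} (using once more $\|v\|_{L^\infty}\leq M_0$) dominates this contribution by $\tfrac{kd}{2}\int_\Omega|\nabla v|^{2k-2}|D^2v|^2+k(k-1)d\int_\Omega|\nabla v|^{2k-4}|\nabla|\nabla v|^2|^2+CdM_0^{2k}$. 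Absorbing the first two summands into the LHS produces exactly the drops $3d\to 3d-d_\Omega$ and $d\to d-d_\Omega$ in the stated coefficients, while the last summand yields the residual term $C_1d_\Omega M_0^{2k}$. This boundary analysis is where the dichotomy $d_\Omega\in\{0,d\}$ originates and accounts for the only place in the argument where the geometry of $\Omega$ enters.
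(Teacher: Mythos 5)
Your proposal is correct and follows essentially the same route as the paper: the same applications of \eqref{inter-1}--\eqref{inter-3} with $g=v$, $h=w$, the elliptic bound $\|D^2w\|_{L^{k+1}}\leq C\|u\|_{L^{k+1}}$, Young's inequality with exponents $\tfrac{k+1}{k}$ and $k+1$ absorbing $\tfrac{kd}{2}$ of the dissipation, and the dichotomous boundary treatment (sign of $\partial_\nu|\nabla v|^2$ in the convex case, the pointwise curvature bound plus a trace--Young argument and \eqref{inter-3} otherwise), which is exactly where the $d_\Omega$ corrections arise in the paper as well.
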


\begin{proof}
By  \eqref{inter-1} and \eqref{inter-3} and  the fact $\|v(\cdot,t)\|_{L^\infty}\leq M_0$ in \eqref{b-v1},  one obtains that
\begin{align}\label{int-coro-2}
        J_1:&=2k\xi_2\int_\Omega|\nabla v|^{2k-2}\nabla v\cdot\nabla(\nabla v\cdot\nabla w) \nonumber\\
        &\leq2k \xi_2(\frac{\sqrt{n}}{2k}+1)\|\nabla v\|^{2k}_{L^{2(k+1)}}\|D^2w\|_{L^{k+1}} \nonumber\\
        &=2k\xi_2(\frac{\sqrt{n}}{2k}+1)\left(\int_\Omega|\nabla v|^{2(k+1)}\right)^{\frac{k}{k+1}}\|D^2w\|_{L^{k+1}}\\
        &\leq2k\xi_2(\frac{\sqrt{n}}{2k}+1)(2k+\sqrt{n})^{\frac{2k}{k+1}}\|v\|^{\frac{2k}{k+1}}_{L^\infty}\left(\int_\Omega|\nabla v|^{2(k-1)}|D^2v|^2\right)^{\frac{k}{k+1}}\|D^2w\|_{L^{k+1}} \nonumber\\
        &\leq2k\xi_2(\frac{\sqrt{n}}{2k}+1)(2k+\sqrt{n})^{\frac{2k}{k+1}}M_0^{\frac{2k}{k+1}}\left(\int_\Omega|\nabla v|^{2(k-1)}|D^2v|^2\right)^{\frac{k}{k+1}}\|D^2w\|_{L^{k+1}} \nonumber
\end{align}
for all $t\in(0,T_{max})$. On the other hand, by the uniqueness of the elliptic problem
\begin{equation}\label{w-2p}
-\Delta w=u-\bar{u} \text{ in } \Omega,  \ \ \   \ \frac{\partial w}{\partial \nu}=0 \text{ on } \partial\Omega, \ \ \   \ \int_\Omega w=0,
\end{equation}
the well-known  $W^{2,p}$-elliptic estimate (cf. \cite{ADN59,ADN64,La}, indeed, $\Delta^{-1}:L^p\rightarrow W^{2,p}$ is a homeomorphism) shows that
\begin{equation}\label{int-coro-3}
\|\Delta w\|_{L^{k+1}}\leq\sqrt{n}\|D^2w\|_{L^{k+1}}\leq c_4\sqrt{n}\|u-\bar{u}\|_{L^{k+1}}\leq 2c_4\sqrt{n}\|u\|_{L^{k+1}}:=c_5\|u\|_{L^{k+1}},
\end{equation}
where  $c_5$ depends only on $n$ and $\Omega$.  Then we  substitute \eqref{int-coro-3} into \eqref{int-coro-2} to get
\begin{equation}\label{int-coro-4}
 J_1\leq2c_5k\xi_2(\frac{1}{2k}+\frac{1}{\sqrt{n}})(2k+\sqrt{n})^{\frac{2k}{k+1}}
{M_0}^{\frac{2k}{k+1}}\left(\int_\Omega|\nabla v|^{2(k-1)}|D^2v|^2\right)^{\frac{k}{k+1}}\|u\|_{L^{k+1}}.
\end{equation}
Furthermore, using \eqref{inter-2}, \eqref{inter-3} and the fact \eqref{int-coro-3}, one can derive
\begin{equation}\label{int-coro-5}
\begin{split}
J_2&:=-2k\xi_2\int_\Omega v\Delta w\nabla\cdot(|\nabla v|^{2k-2}\nabla v)\\
&\leq2k\xi_2(2(k-1)+\sqrt{n})\|v\|_{L^\infty}\|\nabla v\|_{L^{2(k+1)}}^{k-1}\left(\int_\Omega|\nabla v|^{2k-2}|D^2v|^2\right)^{\frac{1}{2}}\|\Delta w\|_{L^{k+1}}\\
&\leq2k\xi_2(2(k-1)+\sqrt{n})(2k+\sqrt{n})^{\frac{k-1}{k+1}}\|v\|_{L^\infty}^{\frac{2k}{k+1}}\left(\int_\Omega|\nabla v|^{2k-2}|D^2v|^2\right)^{\frac{k}{k+1}}\|\Delta w\|_{L^{k+1}}\\
&\leq 2c_5k\xi_2(2(k-1)+\sqrt{n})(2k+\sqrt{n})^{\frac{k-1}{k+1}}
        {M_0}^{\frac{2k}{k+1}}
\left(\int_\Omega|\nabla v|^{2k-2}|D^2v|^2\right)^{\frac{k}{k+1}}\|u\|_{L^{k+1}}.
\end{split}
\end{equation}
In addition, applying the H\"older's inequality together with \eqref{inter-3}, we have
\begin{equation} \label{int-coro-6}
\begin{split}
  J_3:&=2k(2(k-1)+\sqrt{n})\int_\Omega u|\nabla v|^{2k-2}|D^2v|\\
    &\leq2k(2(k-1)+\sqrt{n})\left(\int_\Omega u^2|\nabla v|^{2(k-1)}\right)^{\frac{1}{2}}\left(\int_\Omega|\nabla v|^{2(k-1)}|D^2v|^2\right)^{\frac{1}{2}}\\
    &\leq 2k(2(k-1)+\sqrt{n})\|u\|_{L^{k+1}}\|\nabla v\|_{L^{2(k+1)}}^{k-1}\left(\int_\Omega|\nabla v|^{2(k-1)}|D^2v|^2\right)^{\frac{1}{2}}\\
    &\leq2k(2(k-1)+\sqrt{n})(2k+\sqrt{n})^{\frac{k-1}{k+1}}
 {M_0}^{\frac{k-1}{k+1}}\left(\int_\Omega|\nabla v|^{2(k-1)}|D^2v|^2\right)^{\frac{k}{k+1}}
    \cdot\|u\|_{L^{k+1}}.
\end{split}
\end{equation}
Now, summing over \eqref{int-coro-4}, \eqref{int-coro-5} and \eqref{int-coro-6}, we infer
\begin{equation}\label{int-coro-7}
    J_1+J_2+J_3\leq c_6\left(\int_\Omega|\nabla v|^{2(k-1)}|D^2v|^2\right)^{\frac{k}{k+1}}\|u\|_{L^{k+1}},
\end{equation}
where
\begin{equation} \label{c3-def}
\begin{split}
     c_6&=2c_5k\xi_2(\frac{1}{2k}+\frac{1}{\sqrt{n}})(2k+\sqrt{n})^{\frac{2k}{k+1}}
{M_0}^{\frac{2k}{k+1}}\\
&\ \ +2c_5k\xi_2(2(k-1)+\sqrt{n})(2k+\sqrt{n})^{\frac{k-1}{k+1}}
        {M_0}^{\frac{2k}{k+1}}\\
        &\ \ +2k(2(k-1)+\sqrt{n})(2k+\sqrt{n})^{\frac{k-1}{k+1}}
 {M_0}^{\frac{k-1}{k+1}}\\
 &\leq 6c_5k\xi_2(2k+\sqrt{n})^{\frac{2k}{k+1}}M_0^\frac{2k}{k+1}
 +2k(2k+\sqrt{n})^{\frac{2k}{k+1}}M_0^\frac{k-1}{k+1}\\
 &=2k (1+3c_5\xi_2M_0)(2k+\sqrt{n})^{\frac{2k}{k+1}}M_0^{\frac{k-1}{k+1}}.
\end{split}
\end{equation}
Therefore, by means of the Young's inequality, we infer from  \eqref{int-coro-7} and  \eqref{c3-def} that
\begin{equation}\label{int-coro-7+}
\begin{split}
    &J_1+J_2+J_3\\
    &\leq \frac{k}{2}d\int_\Omega|\nabla v|^{2(k-1)}|D^2v|^2+\frac{c_6^{k+1}}{k+1}\left(\frac{2}{d(k+1)}\right)^k\int_\Omega u^{k+1}\\
    &\leq \frac{k}{2}d\int_\Omega|\nabla v|^{2(k-1)}|D^2v|^2
    +\frac{2^{2k+1}}{d^k}(1+3c_5\xi_2M_0)^{k+1}(2k+\sqrt{n})^{2k}M_0^{k-1}\int_\Omega u^{k+1}.
    \end{split}
\end{equation}
To control the boundary integral on \eqref{Key1}, we first state  the following well-known fact  due to homogeneous Neumann conditions:
\begin{equation}\label{pt-convex}
\frac{\partial|\nabla v|^2}{\partial\nu}\leq 2\sigma_\Omega |\nabla v|^2 \text{ on } \partial\Omega,
\end{equation}
where $\sigma_\Omega=\sigma 1_\Omega$ with $\sigma$ being the maximum curvature of $\partial\Omega$ and $1_\Omega$ being the indicator whether $\Omega$ is non-convex  defined by
$$
1_\Omega=\begin{cases} 0, & \text{ if  } \Omega \text{ is convex \cite[Lemma 5.3]{Ma-79}},\\
1, & \text{ if } \Omega \text{ is non-convex \cite[Lemma 4.2]{MS-Poincare-2014}}.
\end{cases}
$$
Now, combining \eqref{pt-convex} and using  the trace inequality $\|\psi\|_{L^2(\partial\Omega)}\leq\varepsilon\|\nabla\psi\|_{L^2(\Omega)}
+C_\varepsilon\|\psi\|_{L^2(\Omega)}$ for any $\varepsilon>0$ (cf. \cite[Remark 52.9]{SQ07} and \cite[(3.19)]{Xiang18}),  we have
\begin{equation}\label{reg-v7}
    \begin{split}
    kd\int_{\partial\Omega}|\nabla v|^{2k-2}\frac{\partial|\nabla v|^2}{\partial\nu}&\leq 2kd\sigma_\Omega \||\nabla v|^k\|_{L^2(\partial\Omega)}^2\\
        &\leq k(k-1)d_\Omega\int_\Omega|\nabla v|^{2(k-2)}|\nabla|\nabla v|^2|^2+c_7d_\Omega\int_\Omega|\nabla v|^{2k},
    \end{split}
\end{equation}
where $d_\Omega$ is defined by \eqref{d-omega}.
By \eqref{inter-3}, Young's inequality with $\varepsilon$ and the fact $\|v(\cdot,t)\|_{L^\infty}\leq M_0$ in \eqref{b-v1}, we infer that
\begin{equation}\label{nonconvex}
\begin{split}
 c_7d_\Omega\int_\Omega|\nabla v|^{2k}&\leq \frac{kd_\Omega}{2\left(2k+\sqrt{n}\right)^2M_0^2}\int_\Omega|\nabla v|^{2(k+1)}
 +c_8d_\Omega M_0^{2k}\\
 &\leq \frac{k}{2}d_\Omega\int_\Omega|\nabla v|^{2(k-1)}|D^2v|^2+c_8d_\Omega M_0^{2k}.
 \end{split}
\end{equation}
Finally, substituting \eqref{int-coro-7+}, \eqref{reg-v7} and \eqref{nonconvex} into \eqref{Key1} and then keeping key parameters like $\xi_2$ and $M_0$, we accomplish our desired estimate  \eqref{ve-1}.
\end{proof}

\begin{lemma}\label{high-reg}
  Let $\Omega\subset\mathbb{R}^n(n\geq 1)$ and $(u,v,w)$ be the solution of \eqref{system} obtained in Lemma \ref{LS}.  Then, for any $k\geq 1$, there exist two positive constants
$\xi_*(k)$ and $\mu_*(k)$ defined respectively by \eqref{xi1-large0} and \eqref{mu1-large0} such that whenever one of the following conditions holds:
\begin{equation}\label{bdd-con0}
 (1) \ \xi_1\geq \xi_*(k)\chi^2;\ \  \  (2)\  \theta=1, \  \mu\geq \max\left\{1, \ \chi^\frac{8+2n}{5+n}\right\}\mu_*(k)\chi^\frac{2}{5+n}; \ \ \  (3)\  \theta>1, \ \mu>0,
\end{equation}
there exist three positive constants $C_2,C_3, C_4$ independent of $t,\chi, \mu, d$ and $\xi_i~(i=1,2)$ but  depending on $u_0, v_0, \theta, a, k, n$ and $\Omega$ such that, for $t\in (0, T_{\text{max}})$,
\begin{equation}\label{lkuv}
\int_\Omega u^k\leq C_2\left(1+m_1^k+M_1(k)+(1+d_\Omega M_0^{2k})d^{-1}\chi^2M_0^{2-k}+M_2^c(k)\right):=C_2M_2(k),
\end{equation}
and
\begin{equation}\label{l2kv}
\int_\Omega |\nabla v|^{2k}\leq \frac{C_3M_2(k)}{\color{black}{(1+M_0\xi_2)^{-k}d^{-1}\chi^2M_0^{2-k}}},
\end{equation}
as well as, if $k>n$,
\begin{equation}\label{gradw-linfty}
\|w(\cdot,t)\|_{W^{1,\infty}}\leq C_4M_2^\frac{1}{k}(k),
\end{equation}
where $M_2^c$  is defined by
\begin{equation}\label{M2*}
     M_2^c(k):=\begin{cases}
    0,\quad \quad \quad \quad \quad \quad  \quad  \text{if  } \xi_1\geq\xi_*(k)\chi^2,\\[0.25cm]
    0,\quad \quad \quad   \quad \quad \quad  \theta=1, \text{ if }   \mu\geq \max\left\{1, \ \chi^\frac{8+2n}{5+n}\right\}\mu_*(k)\chi^\frac{2}{5+n},\\[0.25cm] \frac{(\theta-1)}{\mu^\frac{k+1}{\theta-1}}
\left[(1+\frac{1}{d^{k+1}})(1+M_0\xi_2)M_0\chi^2 \right]^\frac{k+\theta}{\theta-1},\ \text{if } \theta>1,~\mu>0.
    \end{cases}
\end{equation}
\end{lemma}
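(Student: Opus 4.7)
The plan is to add the $L^k$-estimate from Lemma \ref{cee} with $p=k$ to a suitable multiple $\lambda>0$ of the gradient estimate \eqref{ve-1}, and to derive from the result a closed ODI for the coupled quantity
\[
y(t):=\int_\Omega u^k+\lambda\int_\Omega|\nabla v|^{2k}.
\]
Summing produces on the right the ``bad'' cross term $\tfrac{\chi^2k(k-1)}{2}\int_\Omega u^k|\nabla v|^2$ and the ``bad'' coupling production $\lambda\tfrac{C_1}{d^k}(1+M_0\xi_2)^{k+1}M_0^{k-1}\int_\Omega u^{k+1}$, and on the left the dissipations $\tfrac{\xi_1(k-1)}{2}\int_\Omega u^{k+1}$, $\tfrac{k\mu}{2}\int_\Omega u^{k+\theta}$, and $\tfrac{k}{2}(3d-d_\Omega)\int_\Omega|\nabla v|^{2(k-1)}|D^2v|^2$, on top of the coercive terms $k\int_\Omega u^k$ and $2k\lambda\int_\Omega|\nabla v|^{2k}$.

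\textbf{Taming the cross term.} I would first invoke Young's inequality (Lemma \ref{YI}) with exponents $\tfrac{k+1}{k}$ and $k+1$ to split
\[
\tfrac{\chi^2k(k-1)}{2}\int_\Omega u^k|\nabla v|^2\leq\varepsilon\int_\Omega u^{k+1}+C(\varepsilon,k,\chi)\int_\Omega|\nabla v|^{2(k+1)},
\]
and then apply the interpolation inequality \eqref{inter-3} together with the $L^\infty$-bound $\|v\|_{L^\infty}\leq M_0$ from Lemma \ref{b-v} to rewrite
\[
\int_\Omega|\nabla v|^{2(k+1)}\leq(2k+\sqrt{n})^2M_0^2\int_\Omega|\nabla v|^{2(k-1)}|D^2v|^2.
\]
Choosing the coupling multiplier as $\lambda\asymp d^{-1}\chi^2M_0^{2-k}(1+M_0\xi_2)^{-k}$---the normalization precisely mirrored in the denominator of \eqref{l2kv}---this high-power gradient contribution is absorbed into the $|D^2v|^2$-dissipation on the left of \eqref{ve-1}, leaving only $u^{k+1}$-production on the right.

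\textbf{Case analysis for the $u^{k+1}$-production.} What remains is an effective coefficient $\Gamma_k=\Gamma_k(\varepsilon,\lambda,\chi,\xi_2,d,M_0)$ multiplying $\int_\Omega u^{k+1}$, which must be absorbed into one of the three dissipative mechanisms supplied by the hypotheses:
\begin{itemize}
\item under (1), the repulsive convection dissipation $\tfrac{\xi_1(k-1)}{2}\int_\Omega u^{k+1}$ suffices, and tracking the explicit form of $\Gamma_k$ yields the threshold $\xi_1\geq\xi_*(k)\chi^2$, thereby \emph{defining} $\xi_*(k)$;
\item under (2) with $\theta=1$, the logistic dissipation $\tfrac{k\mu}{2}\int_\Omega u^{k+1}$ does the job, and balancing the powers of $\chi$ hidden in $\Gamma_k$ produces precisely the threshold $\mu\geq\max\{1,\chi^{(8+2n)/(5+n)}\}\mu_*(k)\chi^{2/(5+n)}$;
\item under (3), $\theta>1$, $\mu>0$, one more Young step $u^{k+1}\leq\delta u^{k+\theta}+C_{\delta,\theta,\mu}$ trades $u^{k+1}$ for the higher-order dissipation $\tfrac{k\mu}{2}\int_\Omega u^{k+\theta}$, and the residual constant produces exactly the $M_2^c(k)$-contribution appearing in \eqref{M2*}.
\end{itemize}

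\textbf{Gr\"onwall step and elliptic regularity.} In all three cases we arrive at an ODI of the form $y'(t)+cy(t)\leq C$, where $c>0$ is absolute and $C$ collects $M_1(k)$, the boundary contribution $\lambda C_1d_\Omega M_0^{2k}$, and, in case (3), the $M_2^c(k)$-residue; Gr\"onwall then produces \eqref{lkuv}, and the specific choice of $\lambda$ converts the same bound into \eqref{l2kv}. For the last claim \eqref{gradw-linfty}, the elliptic problem \eqref{w-2p} together with the $W^{2,k}$-regularity estimate already used in \eqref{int-coro-3} yields $\|w\|_{W^{2,k}}\leq c\|u\|_{L^k}$, and the Sobolev embedding $W^{2,k}(\Omega)\hookrightarrow W^{1,\infty}(\Omega)$, valid for $k>n$, concludes the argument. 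The main technical hurdle I anticipate is not conceptual but book-keeping: tracking the precise dependence of $\Gamma_k$ (and hence of the explicit thresholds $\xi_*(k)$ and $\mu_*(k)$) on $d,\chi,\xi_2,\mu,M_0$, and matching it to the highly structured form of the bounds stated in the lemma.
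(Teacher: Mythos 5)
Your proposal follows essentially the same route as the paper: Young's inequality plus the interpolation \eqref{inter-3} and $\|v\|_{L^\infty}\leq M_0$ to trade the cross term for $u^{k+1}$-production and $|D^2v|^2$-dissipation, the coupling weight $\lambda\asymp(1+M_0\xi_2)^{-k}d^{-1}\chi^2M_0^{2-k}$ (the paper's $\delta(\varepsilon_1)$ in \eqref{delta-epsi}), the same three-way absorption of the residual $\int_\Omega u^{k+1}$, and the same elliptic/Sobolev step for \eqref{gradw-linfty}. One book-keeping slip: you list $k\int_\Omega u^k$ among the coercive terms coming out of the summation, but Lemma \ref{cee} does not produce any zeroth-order term $\int_\Omega u^k$ on the left of \eqref{cee-1}; without it the ODI $y'+cy\leq C$ does not close. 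The paper manufactures this term from the dissipation $\frac{k(k-1)}{2}\int_\Omega u^{k-2}|\nabla u|^2$ via the Gagliardo--Nirenberg inequality combined with the mass bound $\|u\|_{L^1}\leq m_1$ (see \eqref{GN-u}), which is precisely the origin of the $m_1^k$ contribution in $M_2(k)$; adding this step makes your argument complete.
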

\begin{proof}
We apply the estimate  \eqref{inter-3} and  the fact $\|v(\cdot,t)\|_{L^\infty}\leq M_0$ to deduce that
\begin{equation}\label{high-reg1}
\begin{split}
\frac{\chi^2(k-1)}{2}\int_\Omega u^k|\nabla v|^2
&\leq \varepsilon_1 \int_\Omega u^{k+1}+c_9\chi^{2(k+1)}\varepsilon_1^{-k}\int_\Omega |\nabla v|^{2(k+1)}\\
&\leq \varepsilon_1 \int_\Omega u^{k+1}+c_{10}\chi^{2(k+1)}M_0^2\varepsilon_1^{-k}\int_\Omega|\nabla v|^{2(k-1)}|D^2v|^2 ,\\
\end{split}
\end{equation}
where  $\varepsilon_1>0$, to  be chosen  in \eqref{epsi-def} below.
On the one hand, using the widely known Gagliardo-Nirenberg inequality (cf. \cite[Section 3]{TW-M3AS-2013}), we can find a constant $c_{11}>0$ only depending on $k, n$ and $\Omega$  and $\alpha=\frac{\frac{k}{2}-\frac{1}{2}}{\frac{k}{2}-\frac{1}{2}+\frac{1}{n}}\in(0,1)$  such that
\begin{equation}\label{GN-u}
\begin{split}
\int_\Omega u^k=\|u^{\frac{k}{2}}\|^2_{L^2}&\leq c_{11}(\|\nabla u^{\frac{k}{2}}\|^{2\alpha}_{L^2}\|u^{\frac{k}{2}}\|^{2(1-\alpha)}_{L^{\frac{2}{k}}}
+\|u^{\frac{k}{2}}\|^2_{L^{\frac{2}{k}}})  \\
&\leq c_{11}\left(\frac{k^2}{4}\int_\Omega u^{k-2}|\nabla u|^2\right)^\alpha m_1^{k(1-\alpha)}+c_{11}m_1^k  \\
&\leq \frac{k(k-1)}{2}\int_\Omega u^{k-2}|\nabla u|^2+c_{12}m_1^k.
\end{split}
\end{equation}
Then  substituting \eqref{high-reg1} and \eqref{GN-u} into \eqref{cee-1} with $p=k$, one has
\begin{equation}\label{high-reg3}
    \begin{split}
         &\frac{d}{dt}\int_\Omega u^k+{\color{black}\int_\Omega u^k}+\frac{\xi_1(k-1)}{\color{black}2}
    \int_\Omega u^{k+1}+\frac{k\mu}{2} \int_\Omega u^{\theta+k}\\
        &\leq \varepsilon_1\int_\Omega u^{k+1}+ c_{10}\chi^{2(k+1)}M_0^2\varepsilon_1^{-k}\int_\Omega|\nabla v|^{2(k-1)}|D^2v|^2+c_{12}m_1^k+M_1(k).
    \end{split}
\end{equation}
On the other hand, it follows from  Lemma \ref{ve} and the fact $d_\Omega\leq d$  that
\begin{equation}\label{high-reg4}
    \begin{split}
        &\frac{d}{dt}\int_\Omega|\nabla v|^{2k}+2k\int_\Omega|\nabla v|^{2k}+
         kd\int_\Omega|\nabla v|^{2k-2}|D^2v|^2\\
        &\leq  \frac{C_1}{d^k}(1+M_0\xi_2)^{k+1}M_0^{k-1}\int_\Omega u^{k+1} +C_1d_\Omega M_0^{2k}.
    \end{split}
\end{equation}
Let us first set
\begin{equation}\label{epsi-def}
c_{13}=\frac{C_1c_{10}}{k}, \ \  \ \ \varepsilon_1=(1+M_0\xi_2)M_0\chi^2(kc_{13})^\frac{1}{k+1},
\end{equation}
and then we put
\begin{equation}\label{delta-epsi}
\delta(\varepsilon_1)=\frac{c_{10}\chi^{2(k+1)}M_0^2\varepsilon_1^{-k}}{kd}= \frac{c_{10}}{k}(1+M_0\xi_2)^{-k}d^{-1}\chi^2M_0^{2-k}(kc_{13})^{-\frac{k}{k+1}},
\end{equation}
and
\begin{equation}\label{yt}
y(t):=\int_\Omega u^k+\delta(\varepsilon_1)\int_\Omega|\nabla v|^{2k}.
\end{equation}
Then we multiply \eqref{high-reg4} by $\delta(\varepsilon_1)$ and add the result to \eqref{high-reg3} to obtain
\begin{equation}\label{high-reg6}
\begin{split}
    &\ \ \ \ y'(t)+y(t)+\frac{\xi_1(k-1)}{2}
    \int_\Omega u^{k+1}+\frac{k\mu}{2} \int_\Omega u^{\theta+k}\\
    &\leq \left\{c_{13}\left(\frac{1}{d}(1+M_0\xi_2)M_0\chi^2\right)^{k+1}\varepsilon_1^{-k}+\varepsilon_1\right\}\int_\Omega u^{k+1}+c_{14}\\
    &=(kc_{13})^\frac{1}{k+1}\left(1+\frac{1}{kd^{k+1}}\right)(1+M_0\xi_2)M_0\chi^2\int_\Omega u^{k+1}+
     c_{14} ,\\
    \end{split}
\end{equation}
where
\begin{equation}\label{c5-def-bdd}
c_{14}=c_{12}m_1^k+M_1(k)+\delta(\varepsilon_1)C_1d_\Omega M_0^{2k}.
\end{equation}
To  show that the first term on the right-hand side of \eqref{high-reg6} can be absorbed  by the terms involving $\int_\Omega u^{k+1}$ or $\int_\Omega u^{\theta+k}$, we first define
\begin{equation}\label{xi1-large0}
\xi_*(k)=\frac{2(kc_{13})^{\frac{1}{k+1}}}{k-1}\left(1+\frac{1}{kd^{k+1}}\right)(1+M_0\xi_2)M_0.
\end{equation}
{\bf Case 1: $\xi_1\geq \xi_*(k)\chi^2$}. In this case, notice that $M_0$ is independent of $\xi_1$ by \eqref{b-v1} and \eqref{b-uvw1}, in view of \eqref{high-reg6} along with \eqref{c5-def-bdd} and  \eqref{xi1-large0}, we deduce that
\begin{equation}\label{bdd-odi}
    y'(t)+y(t)\leq c_{12}m_1^k+M_1(k)+\delta(\varepsilon_1)C_1d_\Omega M_0^{2k}.
\end{equation}
Solving this ODI and using \eqref{delta-epsi} and \eqref{yt},  we get  that
\begin{equation}\label{high-reg7}
    \begin{split}
    &\int_\Omega u^k+\frac{c_{10}}{k}(1+M_0\xi_2)^{-k}d^{-1}\chi^2M_0^{2-k}(kc_{13})^{-\frac{k}{k+1}}\int_\Omega |\nabla v|^{2k}\\[0.25cm]
    &\leq \|u_0\|^k_{L^k}+c_{15}(1+M_0\xi_2)^{-k}d^{-1}\chi^2M_0^{2-k}\|\nabla v_0\|^{2k}_{L^{2k}}\\[0.25cm]
    &\ \ +c_{12}m_1^k+M_1(k)+c_{15}(1+M_0\xi_2)^{-k}d^{-1}\chi^2M_0^{k+2}d_\Omega \\[0.25cm]
    &\leq c_{16}\left[1+m_1^k+M_1(k)+(1+d_\Omega M_0^{2k})d^{-1}\chi^2M_0^{2-k}\right].
    \end{split}
\end{equation}
{\bf Case 2: $\theta=1$ and $\mu>0$ is suitably large.}  In this case, we first observe from  \eqref{b-v1} and \eqref{b-uvw1} that $M_0$ is bounded by  $O(1)(1+\mu^{-(2+n/2)})$, and therefore,
$$
(1+M_0\xi_2)M_0\leq O(1)\left(1+\left(\frac{1}{\mu}\right)^{4+n}\right).
$$
This enables us to infer that
\begin{equation}\label{m1k-def}
\hat{\mu}_*(k)=\frac{2(kc_{13})^{\frac{1}{k+1}}}{k}\left(1+\frac{1}{kd^{k+1}}\right) \sup_{0<\mu<1}\left\{\mu^{4+n}(1+M_0\xi_2)M_0\right\}<+\infty
\end{equation}
and
\begin{equation}\label{m2k-def}
\tilde{\mu}_*(k)=\frac{2(kc_{13})^{\frac{1}{k+1}}}{k}\left(1+\frac{1}{kd^{k+1}}\right)
\sup_{\mu\geq 1}\left\{(1+M_0\xi_2)M_0\right\}<+\infty.
\end{equation}
Now, we define
\begin{equation}\label{mu1-large0}
\mu_*(k)=\max\left\{\left(\hat{\mu}_*(k)\right)^\frac{1}{5+n}, \ \ \ \tilde{\mu}_*(k)\right\}<+\infty.
\end{equation}
Then,  under (2) of \eqref{bdd-con0}, we see from  \eqref{mu1-large0} that
$$
 \mu\geq \max\left\{1, \ \chi^\frac{8+2n}{5+n}\right\}\mu_*(k)\chi^\frac{2}{5+n}
 \geq \max\left\{\left(\hat{\mu}_*(k)\right)^\frac{1}{5+n}\chi^\frac{2}{5+n}, \ \  \tilde{\mu}_*(k) \chi^2\right\},
$$
and so \eqref{m1k-def} together with \eqref{m2k-def} implies
\begin{equation}\label{odi-cond}
\mu\geq \frac{2(kc_{13})^{\frac{1}{k+1}}}{k}\left(1+\frac{1}{kd^{k+1}}\right)
(1+M_0\xi_2)M_0\chi^2.
\end{equation}
Then in light of  \eqref{high-reg6} along with \eqref{c5-def-bdd} and  \eqref{odi-cond}, we derive  an identical  ODI as \eqref{bdd-odi}, and then we get the same estimate as \eqref{high-reg7}.

{\bf Case 3: $\theta>1$ and $\mu>0$.}
Let
\begin{equation}\label{c7-def-bdd}
\begin{split}
c_{17}&:=\sup_{s>0}\left\{(kc_{13})^{\frac{1}{k+1}}\left(1+\frac{1}{kd^{k+1}}\right)(1+M_0\xi_2)M_0 \chi^2 s^{k+1}-\frac{k\mu}{2}s^{k+\theta} \right\}\\
&=\frac{(\theta-1)}{k+\theta}\left(\frac{2(k+1)}{k(k+\theta)\mu}\right)^\frac{k+1}{\theta-1}
\left[(kc_{13})^{\frac{1}{k+1}}\left(1+\frac{1}{kd^{k+1}}\right)(1+M_0\xi_2)M_0\chi^2 \right]^\frac{k+\theta}{\theta-1}.
\end{split}
\end{equation}
 Then, combining \eqref{c7-def-bdd} and  \eqref{high-reg6} along with \eqref{c5-def-bdd}, we end up with
$$
    y'(t)+y(t)\leq c_{12}m_1^k+M_1(k)+\delta(\varepsilon_1)C_1d_\Omega M_0^{2k}+c_{17} |\Omega|,
$$
which in conjunction with  \eqref{high-reg7},  \eqref{c7-def-bdd} and \eqref{delta-epsi}  allows us to deduce that
\begin{equation}\label{high-reg11}
\begin{split}
&\int_\Omega u^k+\frac{c_{10}}{k}(1+M_0\xi_2)^{-k}d^{-1}\chi^2M_0^{2-k}(kc_{13})^{-\frac{k}{k+1}}\int_\Omega |\nabla v|^{2k}\\[0.25cm]
&\leq c_{16}\left[1+m_1^k+M_1(k)+(1+d_\Omega M_0^{2k})d^{-1}\chi^2M_0^{2-k}\right]\\[0.25cm]
&\ \ \ +c_{18}\frac{(\theta-1)}{\mu^\frac{k+1}{\theta-1}}
\left[\left(1+\frac{1}{d^{k+1}}\right)(1+M_0\xi_2)M_0\chi^2 \right]^\frac{k+\theta}{\theta-1}.
\end{split}
\end{equation}
The desired qualitative $(L^k, L^{2k})$-bounds of $(u, \nabla v)$ in \eqref{lkuv} and \eqref{l2kv} are resulted from  \eqref{high-reg7} and \eqref{high-reg11} upon simple algebraic manipulations.  Finally, if $k>n$, then the $W^{2,k}$-estimate \eqref{int-coro-3} and the Sobolev embedding $W^{2,k}\hookrightarrow W^{1,\infty}$ together imply \eqref{gradw-linfty}.
\end{proof}

\subsection{Qualitative $(L^\infty, W^{1,\infty})$-boundedness of $(u,v)$}
Before proceeding, based on  \eqref{xi1-large0} and \eqref{mu1-large0},  we first  define
\begin{equation}\label{xi1-large}
\xi_0=\inf_{k>\frac{n}{2}}\xi_*(k)
=\inf_{k>\frac{n}{2}}\left\{\frac{2(kc_{13})^\frac{1}{k+1}}{k-1}
\left(1+\frac{1}{kd^{k+1}}\right)\right\}(1+M_0\xi_2)M_0
\end{equation}
and, with $\hat{\mu}_*(k)$ and $\tilde{\mu}_*(k)$ defined by \eqref{m1k-def} and \eqref{m2k-def},
\begin{equation}\label{mu1-large}
\mu_0=\inf_{k>\frac{n}{2}}\mu_*(k)=\inf_{k>\frac{n}{2}}\left\{\max\left\{\left(\hat{\mu}_*(k)\right)^\frac{1}{5+n}, \ \ \ \tilde{\mu}_*(k)\right\}\right\}.
\end{equation}
\begin{lemma}\label{b-u}
Let $\Omega\subset\mathbb{R}^n(n\geq 2)$ and $(u,v,w)$ be the local solution of \eqref{system} obtained in Lemma \ref{LS}.  Suppose that  one of the following conditions holds:
\begin{equation}\label{bdd-con}
 (1) \ \xi_1>\xi_0\chi^2;\   \   (2)\  \theta=1, \  \mu> \max\left\{1, \ \chi^\frac{8+2n}{5+n}\right\}\mu_0\chi^\frac{2}{5+n}; \   \  (3)\  \theta>1, \ \mu>0.
\end{equation}
 Then the $L^\infty$-norm of the $u$-solution component is uniformly bounded on $(0, T_{\text{max}})$. In particular, if one of the following conditions holds:
\begin{equation}\label{bdd-con+}
 (1) \ \xi_1\geq \xi_*(3n)\chi^2;\ \    (2) \ \theta=1, \  \mu\geq \max\left\{1, \ \chi^\frac{8+2n}{5+n}\right\}\mu_*(3n) \chi^\frac{2}{5+n}; \ \   (3)\  \theta>1,   \mu>0,
\end{equation}
 then there exists $C_5>0$  independent of $t,\chi,\mu,d~and~\xi_i(i=1,2)$ but depending on  $n,u_0, v_0$ and $\Omega$ such that the local solution  $(u,v,w)$ of \eqref{system} obtained in Lemma \ref{LS} satisfies, for $t\in (0, T_{\text{max}})$, that
\begin{equation}\label{u-infty-fin}
\begin{split}
&\|u(t)\|_{L^\infty}\leq C_5M_3(3n)\\
&:= C_5 \begin{cases}
1+M_2^\frac{1}{3n}(3n)+\xi_1 M_2^\frac{2}{3n}(3n)+\frac{\chi M_2^\frac{2}{3n}(3n)}{\left((1+M_0\xi_2)^{-3n}
d^{-1}\chi^2M_0^{2-3n}\right)^\frac{1}{3n}}, &\text{if } a=\mu=0, \\[0.25cm]
1+\left(\frac{1}{\mu}\right)^\frac{1}{\theta}+\xi_1 M_2^\frac{2}{3n}(3n) +\frac{\chi M_2^\frac{2}{3n}(3n)}{\left((1+M_0\xi_2)^{-3n}
d^{-1}\chi^2M_0^{2-3n}\right)^\frac{1}{3n}},  &\text{if } a,  \mu>0.
\end{cases}
\end{split}
\end{equation}
where $M_2(\cdot)$  is defined by \eqref{M2*}.
\end{lemma}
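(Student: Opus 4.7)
The strategy is to convert the quantitative $L^k$-bound for $u$, $L^{2k}$-bound for $\nabla v$, and (for $k>n$) $W^{1,\infty}$-bound for $w$ obtained in Lemma~\ref{high-reg} into an $L^\infty$-bound on $u$ via the variation-of-constants representation of the $u$-equation combined with the smoothing estimates of the Neumann heat semigroup in Lemma~\ref{SG}. Under any of the strict conditions in \eqref{bdd-con}, the definition of $\xi_0$ and $\mu_0$ as infima over $k>n/2$ (see \eqref{xi1-large}, \eqref{mu1-large}) guarantees the existence of some admissible $k>n/2$ for which the hypotheses of Lemma~\ref{high-reg} hold, whereas for the quantitative estimate \eqref{u-infty-fin} the specific choice $k=3n$ is legal whenever the stronger conditions \eqref{bdd-con+} are satisfied; this gives the $\|\nabla w\|_{L^\infty}$-bound for free via \eqref{gradw-linfty}.

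I would start by writing
\begin{align*}
u(t) = e^{t\Delta}u_0 &-\chi \int_0^t e^{(t-s)\Delta}\nabla\cdot(u\nabla v)(s)\,ds + \xi_1 \int_0^t e^{(t-s)\Delta}\nabla\cdot(u\nabla w)(s)\,ds \\
&+ \int_0^t e^{(t-s)\Delta}\bigl(au-\mu u^{\theta+1}\bigr)(s)\,ds,
\end{align*}
taking $L^\infty$-norms term by term. For the flux terms, Lemma~\ref{SG}(iv) applied with $(p,q)=(\infty,2n)$ and $(p,q)=(\infty,3n)$ produces singularities of the forms $(t-s)^{-3/4}$ and $(t-s)^{-2/3}$ respectively, both time-integrable. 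The emerging norms $\|u\nabla v\|_{L^{2n}}$ and $\|u\nabla w\|_{L^{3n}}$ are handled by H\"older's inequality,
\[
\|u\nabla v\|_{L^{2n}}\le \|u\|_{L^{3n}}\|\nabla v\|_{L^{6n}},\qquad \|u\nabla w\|_{L^{3n}}\le \|u\|_{L^{3n}}\|\nabla w\|_{L^\infty},
\]
and all three factors on the right are quantitatively controlled by Lemma~\ref{high-reg} at $k=3n$. For the kinetic term, when $\mu>0$ the elementary pointwise bound $as-\mu s^{\theta+1}\le C\bigl(1+(1/\mu)^{1/\theta}\bigr)$ (via Young's inequality) produces the $(1/\mu)^{1/\theta}$-contribution in \eqref{u-infty-fin}, whereas in the case $a=\mu=0$ this term vanishes and the $M_2^{1/(3n)}(3n)$-contribution there is traced back to $\|e^{t\Delta}u_0\|_{L^\infty}$ controlled through Lemma~\ref{SG}(i) against the $L^{3n}$-bound of $u$.

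To account for the exponent $M_2^{2/(3n)}(3n)$ appearing in the $\xi_1$- and $\chi$-contributions of \eqref{u-infty-fin}, a single interpolation-bootstrap step is needed: setting $M_\infty(T):=\sup_{t\in(0,T)}\|u(t)\|_{L^\infty}$, one interpolates $\|u\|_{L^p}\le \|u\|_{L^{3n}}^{3n/p}\|u\|_{L^\infty}^{1-3n/p}$ for a suitably chosen $p>3n$, substitutes into the flux estimates, and absorbs the factor $M_\infty(T)^{1-3n/p}$ into the left-hand side through Young's inequality; this absorption doubles the natural $M_2^{1/(3n)}(3n)$-factor into $M_2^{2/(3n)}(3n)$. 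Passing to the supremum over $t\in(0,T_{\mathrm{max}})$ then yields \eqref{u-infty-fin}, while global-in-time extensibility follows from the criterion \eqref{KS-1} together with the $W^{1,p}$-control of $v$ for $p>n$ provided by Lemma~\ref{high-reg}.

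The main obstacle is the careful book-keeping of the explicit dependence on $\chi,\xi_1,\xi_2,d,\mu$, rather than lumping them into generic constants, because $M_0$ and $M_2(3n)$ themselves depend non-trivially on these parameters through Lemmas~\ref{b-v} and~\ref{high-reg}. A secondary technicality arises in the purely qualitative setting \eqref{bdd-con}: the infimum characterisation of $\xi_0,\mu_0$ may only deliver some $k>n/2$ with possibly $k\le n$, in which case a preliminary $L^p$-to-$L^{p'}$ bootstrap (again based on Lemma~\ref{SG} and the $v$-equation) is needed to upgrade the exponent past $n$ before the final $L^\infty$-semigroup argument can be launched.
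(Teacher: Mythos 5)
Your proposal follows essentially the same route as the paper: the $L^{3n}$-, $L^{6n}$- and $W^{1,\infty}$-bounds from Lemma \ref{high-reg} with $k=3n$ (respectively some $k>\frac{n}{2}$ in the qualitative case), the variation-of-constants representation, the $L^p$-$L^q$ smoothing of Lemma \ref{SG}, H\"older splittings of the two flux terms, and Young-type absorption of $\sup\|u\|_{L^\infty}$ when only $k>\frac{n}{2}$ is available. One bookkeeping remark: the exponent $M_2^{\frac{2}{3n}}(3n)$ in \eqref{u-infty-fin} arises directly as the product of the two H\"older factors (each contributing roughly $M_2^{\frac{1}{3n}}$), not from a Young absorption ``doubling'' a single factor --- indeed for $k=3n\geq \frac{5n}{2}$ no absorption is needed at all, exactly as your own splitting $\|u\nabla v\|_{L^{2n}}\leq\|u\|_{L^{3n}}\|\nabla v\|_{L^{6n}}$ already shows.
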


\begin{proof}
By the definitions of $\xi_0$ and $\mu_0$ respectively in \eqref{xi1-large} and \eqref{mu1-large}, it is  easy to see that the corresponding case of \eqref{bdd-con} implies that of \eqref{bdd-con0} for some $k>\frac{n}{2}$. Then an application of Lemma \ref{high-reg} shows
\begin{equation}\label{ulp}
\|u(\cdot,t)\|_{L^k}\leq c_{19}M_2^\frac{1}{k}(k), \quad \quad  \|\nabla v(\cdot,t)\|_{L^{2k}}\leq c_{20}\left(\frac{M_2(k)}{(1+M_0\xi_2)^{-k}
d^{-1}\chi^2M_0^{2-k}}\right)^\frac{1}{2k}.
\end{equation}
Therefore, the $W^{2,k}$-estimate \eqref{int-coro-3} yields
$$
\|w(\cdot,t)\|_{W^{2,k}}\leq c_{20}\|u(\cdot,t)-\bar{u}\|_{L^k}\leq 2c_{20}\|u(\cdot,t)\|_{L^k}\leq 2c_{19}c_{20}M_2^\frac{1}{k}(k),
$$
and so the Sobolev embedding $W^{2,k}\hookrightarrow W^{1,q}$ for some  $q>n$ shows
\begin{equation}\label{wlp}
\|w(\cdot,t)\|_{W^{1,q}}\leq  c_{21}M_2^\frac{1}{k}(k), \quad \quad     q=\frac{4nk}{k+3(n-k)^+}.
\end{equation}
We now use similar sprits as used in \cite{JX18, Xiang18} to derive the $L^\infty$-bound of $u$. To that purpose, we  use the  variation-of-constants formula to the $u$-equation in \eqref{system} to write
\begin{equation*}
\begin{split}
u(\cdot,t)&=e^{t(\Delta-1)}u_0+\int_0^te^{(\Delta-1)(t-s)}\nabla\cdot\{u(\cdot,s)(\xi_1\nabla w(\cdot,s)-\chi\nabla v(\cdot,s))\}ds\\
&\;\;\; + \int_0^te^{(\Delta-1)(t-s)}\left[(a+1) u(\cdot,s) -\mu u^{\theta+1}(\cdot,s)\right]ds\\
&=:u_1(\cdot,t)+u_2(\cdot,t)+u_3(\cdot,t).
\end{split}
\end{equation*}
We next estimate $u_1, u_2$ and $u_3$. First, the nonnegativity of $u$ shows, for $t\in(0, T_{\text{max}})$,
\begin{equation}\label{u-infty-dec}
\|u(\cdot,t)\|_{L^\infty}=\sup_{x\in\Omega }u(x,t)\leq \sup_{x\in\Omega }u_1(x,t)+\sup_{x\in\Omega }u_2(x,t)+\sup_{x\in\Omega }u_3(x,t).
\end{equation}
By the order property of the Neumann heat semigroup  $(e^{t\Delta})_{t>0}$ due to the maximum
principle, we estimate $u_1$ and $u_3$ in the following ways:
\begin{equation}\label{u1-est}
\|u_1(\cdot, t)\|_{L^\infty} = \| e^{t(\Delta-1)}u_0\|_{L^\infty} \leq  e^{-t}\| u_0\|_{L^\infty}  \leq  \|u_0\|_{L^\infty},
\end{equation}
and, the semigroup estimate \eqref{SG-1} in Lemma \ref{SG} and \eqref{ulp} yield
\begin{equation}\label{u3-est}
\begin{split}
u_3(\cdot, t)  &= \int_0^te^{(\Delta-1)(t-s)}\left[(a+1) u(\cdot,s) -\mu u^{\theta+1}(\cdot,s)\right]ds\\[0.25cm]
&\leq \begin{cases}\int_0^te^{(\Delta-1)(t-s)} u(\cdot,s)ds, & \text{ if } a=\mu=0, \\[0.25cm]
\int_0^te^{(\Delta-1)(t-s)}\left(\frac{a+1}{(1+\theta)\mu}\right)^\frac{1}{\theta}
\frac{(a+1)\theta}{1+\theta} ds,  & \text{ if } a\geq 0,\ \  \mu>0,
\end{cases}\\[0.25cm]
&\leq c_{21} \begin{cases}M_2^\frac{1}{k}(k)\int_0^t(1+(t-s)^{-\frac{n}{2k}})e^{-(t-s)}ds, & \text{ if } a=\mu=0, \\[0.25cm]
\int_0^te^{-(t-s)}e^{\Delta (t-s)}\left(\frac{1}{\mu}\right)^\frac{1}{\theta}ds,  & \text{ if } a\geq 0,\ \ \mu>0,
\end{cases}\\[0.25cm]
&\leq c_{22} \begin{cases}M_2^\frac{1}{k}(k), & \text{ if } a=\mu=0, \\[0.25cm]
\left(\frac{1}{\mu}\right)^\frac{1}{\theta},  & \text{ if } a\geq 0,\ \   \mu>0.
\end{cases}
\end{split}
\end{equation}
For convenience of reference, we  list two H\"{o}lder's type interpolation inequalities:
$$
\|fg\|_{L^r}\leq \|f\|_{L^p}\|g\|_{L^q} \text{ with } p,q,r\geq 1,  \ \   \frac{1}{r}=\frac{1}{p}+\frac{1}{q},
$$
and
$$
\|f\|_{L^r}\leq \|f\|_{L^\infty}^\frac{(r-k)^+}{r}\|f\|_{L^k}^\frac{k}{k+(r-k)^+}
|\Omega|^\frac{(k-r)^+}{kr}  \text{ with } k,r\geq 1.
$$
Employing the semigroup estimate \eqref{SG-4} in Lemma \ref{SG},  H\"{o}lder's interpolation inequalities above and  \eqref{wlp}, we deduce, for $k>\frac{n}{2}$, that
\begin{equation}\label{u21-est}
\begin{split}
&\left\|\xi_1\int_0^te^{(\Delta-1)(t-s)}\nabla \cdot(u\nabla w)ds\right\|_{L^\infty}\\
&\leq c_{23} \xi_1\int_0^t\left(1+(t-s)^{-1+\frac{k-(n-k)^+}{8k}}\right)
e^{-(t-s)}\|u\nabla w\|_{L^\frac{4nk}{3k+(n-k)^+}}ds\\
&\leq c_{24} \xi_1\int_0^t\left(1+(t-s)^{-1+\frac{k-(n-k)^+}{8k}}\right)
e^{-(t-s)}\|u\|_{L^\frac{2nk}{k-(n-k)^+}} \|\nabla w\|_{L^\frac{4nk}{k+3(n-k)^+}}ds\\
&\leq c_{25} \xi_1M_2^\frac{2k-2(n-k)^++\left(2n-k+(n-k)^+\right)^+}{\left[k-(n-k)^++\left(2n-k+(n-k)^+\right)^+\right]k}(k)\left(\sup_{s\in(0,t)}\|u(s)\|_{L^\infty}\right)^
\frac{\left(2n-k+(n-k)^+\right)^+}{2n}\\
&= c_{25} \xi_1\begin{cases}
M_2^\frac{2n+k-(n-k)^+}{2nk}(k)\left(\sup_{s\in(0,t)}\|u(s)\|_{L^\infty}\right)^
\frac{2n-k+(n-k)^+}{2n}, &\text{ if } k<2n, \\[0.25cm]
M_2^\frac{2}{k}(k),  &\text{ if } k\geq 2n,
\end{cases} \\
&\leq \begin{cases}
\frac{1}{4}\sup_{s\in(0,t)}\|u(s)\|_{L^\infty}+c_{26}\xi_1^\frac{2n}{k-(n-k)^+}
M_2^\frac{2n+k-(n-k)^+}{[k-(n-k)^+]k}(k), &\text{ if } k<2n, \\[0.25cm]
c_{25} \xi_1M_2^\frac{2}{k}(k),  &\text{ if } k\geq 2n,
\end{cases}
\end{split}
\end{equation}
where we used the fact that
\begin{align*}
&\|u\|_{L^\frac{2nk}{k-(n-k)^+}}\|\nabla w\|_{L^\frac{4nk}{k+3(n-k)^+}}\\
&\leq c_{27}\|u\|_{L^k}^\frac{k-(n-k)^+}{k-(n-k)^++\left(2n-k+(n-k)^+\right)^+}
\|u\|_{L^\infty}^\frac{\left(2n-k+(n-k)^+\right)^+}{2n}\|\nabla w\|_{L^\frac{4nk}{k+3(n-k)^+}}\\
&\leq c_{28}M_2^\frac{2k-2(n-k)^++\left(2n-k+(n-k)^+\right)^+}{\left[k-(n-k)^++\left(2n-k+(n-k)^+\right)^+\right]k}
\|u\|_{L^\infty}^\frac{\left(2n-k+(n-k)^+\right)^+}{2n}
\end{align*}
and the finiteness of gamma integral due to the fact that $k-(n-k)^+>0$:
$$
\int_0^t\left(1+(t-s)^{-1+\frac{k-(n-k)^+}{8k}}\right)
e^{-(t-s)}ds=\int_0^t\left(1+\tau^{-1+\frac{k-(n-k)^+}{8k}}\right)
e^{-\tau}d\tau<\infty.
$$
Similarly, using the boundedness information in \eqref{ulp}, we infer
\begin{equation}\label{u22-est}
\begin{split}
&\left\|-\chi\int_0^te^{(\Delta-1)(t-s)}\nabla \cdot(u\nabla v)(\cdot,s)ds\right\|_{L^\infty}\\
&\leq c_{29} \chi\int_0^t\left(1+(t-s)^{-1+\frac{2k-n}{8k}}\right)
e^{-(t-s)}\|(u\nabla v)(\cdot,s)\|_{L^\frac{4nk}{2k+n}}ds\\
&\leq c_{30} \chi\int_0^t\left(1+(t-s)^{-1+\frac{2k-n}{8k}}\right)
e^{-(t-s)}\|u(\cdot,s)\|_{L^\frac{4nk}{2k-n}} \|\nabla v(\cdot,s)\|_{L^{2k}}ds\\
&\leq c_{31} \frac{\chi M_2^{\frac{4k-2n+(5n-2k)^+}{k[2k-n+(5n-2k)^+]}}(k)}{\left((1+M_0\xi_2)^{-k}
d^{-1}\chi^2M_0^{2-k}\right)^\frac{1}{k}}\left(\sup_{s\in(0,t)}\|u(\cdot,s)\|_{L^\infty}\right)^
\frac{(5n-2k)^+}{4n}\\
&\leq \begin{cases}
\frac{1}{4}\sup_{s\in(0,t)}\|u(s)\|_{L^\infty}+c_{32}\frac{\chi^\frac{4n}{2k-n} M_2^{\frac{2k+3n}{k(2k-n)}}(k)}{\left((1+M_0\xi_2)^{-k}
d^{-1}\chi^2M_0^{2-k}\right)^\frac{4n}{k(2k-n)}}, &\text{ if } k<\frac{5n}{2}, \\[0.25cm]
c_{31}\frac{ \chi M_2^\frac{2}{k}(k)}{\left((1+M_0\xi_2)^{-k}
d^{-1}\chi^2M_0^{2-k}\right)^\frac{1}{k}},  &\text{ if } k\geq \frac{5n}{2}.
\end{cases}
\end{split}
\end{equation}
In the case of $k\geq \frac{5n}{2}$, substituting \eqref{u1-est}, \eqref{u3-est}, \eqref{u21-est} and \eqref{u22-est} into \eqref{u-infty-dec}, we accomplish that
\begin{equation}\label{u-fin1}
\|u(t)\|_{L^\infty}\leq c_{32}\begin{cases}
1+M_2^\frac{1}{k}(k)+\xi_1 M_2^\frac{2}{k}(k)+\frac{\chi M_2^\frac{2}{k}(k)}{\left((1+M_0\xi_2)^{-k}
d^{-1}\chi^2M_0^{2-k}\right)^\frac{1}{k}}, &\text{if } \mu=0, \\[0.25cm]
1+\left(\frac{1}{\mu}\right)^\frac{1}{\theta}+\xi_1M_2^\frac{2}{k}(k)+\frac{\chi M_2^\frac{2}{k}(k)}{\left((1+M_0\xi_2)^{-k}
d^{-1}\chi^2M_0^{2-k}\right)^\frac{1}{k}},  &\text{if }   \mu>0.
\end{cases}
\end{equation}
Similarly, in the case of $k\in[2n, \frac{5n}{2})$, we have
\begin{equation}\label{u-fin2}
\begin{split}
\|u(t)\|_{L^\infty}\leq c_{33}\begin{cases}
1+M_2^\frac{1}{k}(k)+\xi_1M_2^\frac{2}{k}(k)+\frac{\chi^\frac{4n}{2k-n} M_2^{\frac{2k+3n}{k(2k-n)}}(k)}{\left((1+M_0\xi_2)^{-k}
d^{-1}\chi^2M_0^{2-k}\right)^\frac{4n}{k(2k-n)}}, &\text{if } \mu=0, \\[0.2cm]
1+\left(\frac{1}{\mu}\right)^\frac{1}{\theta}+\xi_1M_2^\frac{2}{k}(k)+\frac{\chi^\frac{4n}{2k-n} M_2^{\frac{2k+3n}{k(2k-n)}}(k)}{\left((1+M_0\xi_2)^{-k}
d^{-1}\chi^2M_0^{2-k}\right)^\frac{4n}{k(2k-n)}},  &\text{if }  \mu>0.
\end{cases}
\end{split}
\end{equation}
And in the case of $k\in(\frac{n}{2}, 2n)$, we get
\begin{equation}\label{u-fin3}
 \begin{split}
\|u(t)\|_{L^\infty}\leq & c_{34}\left(1+\frac{\chi^\frac{4n}{2k-n} M_2^{\frac{2k+3n}{k(2k-n)}}(k)}{\left((1+M_0\xi_2)^{-k}
d^{-1}\chi^2M_0^{2-k}\right)^\frac{4n}{k(2k-n)}}\right)\\
&+c_{34}\xi_1^\frac{2n}{k-(n-k)^+}
M_2^\frac{2n+k-(n-k)^+}{[k-(n-k)^+]k}(k)+c_{34}\begin{cases}
M_2^\frac{1}{k}(k), &\text{ if } \mu=0, \\[0.25cm]
\left(\frac{1}{\mu}\right)^\frac{1}{\theta},  &\text{ if },  \mu>0.
\end{cases}
\end{split}
\end{equation}
Combining \eqref{u-fin1}, \eqref{u-fin2} and \eqref{u-fin3}, we see  either one of \eqref{bdd-con} implies that  $\|u(\cdot, t)\|_{L^\infty}$ is uniformly bounded for $t\in(0, T_{\text{max}})$. In particular, if one of  \eqref{bdd-con+} holds, then the desired qualitative  bound of $u$ in \eqref{u-infty-fin} follows upon setting $k=3n$ in \eqref{u-fin1}.
\end{proof}

\begin{lemma}\label{ev} Let $\Omega\subset\mathbb{R}^n(n\geq 2)$ and $(u,v,w)$ be the solution of \eqref{system} obtained in Lemma \ref{LS}.  If  one of \eqref{bdd-con}  holds, then $\|\nabla v(\cdot,t)\|_{L^\infty}$  is uniformly bounded on $(0, T_{\text{max}})$. In particular,  if one of the following conditions holds:
 \begin{equation}\label{bdd-con++}
 (1) \ \xi_1\geq \xi_*(3n)\chi^2;\ \    (2) \ \theta=1, \  \mu\geq \max\left\{1, \ \chi^\frac{8+2n}{5+n}\right\}\mu_*(3n) \chi^\frac{2}{5+n}; \ \   (3)\  \theta>1,   \mu>0,
\end{equation}
then, for $t\in (0, T_{\text{max}})$, there exists $C_6>0$ depending only on $n,u_0, v_0$ and $\Omega$ such that
\begin{equation}\label{gradv-est-fin}
\begin{split}
&\|\nabla v(\cdot, t)\|_{L^\infty}\\
&\leq C_6\left(1+\frac{\xi_2 M_2^\frac{1}{2n}(3n)}{d\left((1+M_0\xi_2)^{-3n}
d^{-1}\chi^2M_0^{2-3n}\right)^\frac{1}{6n}}+\frac{\xi_2}{d} M_0 M_2^\frac{1}{3n}(3n)+\frac{M_2^\frac{1}{3n}(3n)}{d}\right)
\end{split}
\end{equation}
and
\begin{equation}\label{w-1infty-est}
\|w(\cdot, t)\|_{W^{2,\infty}}\leq C_6M_3(3n),
\end{equation}
where $M_0$, $M_2(\cdot)$  and $M_3(\cdot)$ are defined by \eqref{b-v1}, \eqref{lkuv} and \eqref{u-infty-fin} respectively.
\end{lemma}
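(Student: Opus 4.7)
The plan is to derive \eqref{gradv-est-fin} via the variation-of-constants formula applied to the $v$-equation of \eqref{system}, and to obtain \eqref{w-1infty-est} by elliptic regularity applied to the $w$-equation, with both steps drawing on the $L^p$ and $\nabla w$ estimates just established in Lemma \ref{high-reg} and Lemma \ref{b-u}.

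First, I would use $\Delta w = \bar u - u$ to expand the convective term as $\nabla\cdot(v\nabla w) = \nabla v\cdot\nabla w + v(\bar u - u)$, so that Duhamel's formula reads
\begin{equation*}
v(\cdot,t) = e^{t(d\Delta-1)}v_0 + \int_0^t e^{(t-s)(d\Delta-1)}\bigl\{u + \xi_2\nabla v\cdot\nabla w + \xi_2 v(\bar u - u)\bigr\}(\cdot,s)\,ds.
\end{equation*}
Taking $\nabla$ of both sides and applying part (ii) of Lemma \ref{SG} (suitably rescaled to absorb the diffusion coefficient $d$ via $\tau = sd$), I decompose $\|\nabla v(\cdot,t)\|_{L^\infty}$ into three pieces $I_1, I_2, I_3$ corresponding respectively to the source $u$, the gradient-gradient term $\xi_2\nabla v\cdot\nabla w$, and the reaction-type term $\xi_2 v(\bar u - u)$.

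For $I_1$ I invoke the $L^{3n}$-bound $\|u(\cdot,s)\|_{L^{3n}}\leq C M_2^{1/(3n)}(3n)$ (which is guaranteed by \eqref{lkuv} at $k=3n$, since one of \eqref{bdd-con++} implies the corresponding case of \eqref{bdd-con0}); the time singularity $\tau^{-\frac{1}{2}-\frac{n}{2\cdot 3n}} = \tau^{-2/3}$ is integrable and produces a factor $\sim 1/d$ after rescaling, yielding the last summand of \eqref{gradv-est-fin}. For $I_3$ I use $\|v\|_{L^\infty}\leq M_0$ from Lemma \ref{b-v} together with the same $L^{3n}$-bound on $u$ to estimate $\|v(\bar u - u)\|_{L^{3n}}\leq C M_0 M_2^{1/(3n)}(3n)$, producing the middle summand of \eqref{gradv-est-fin}. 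For the most delicate term $I_2$, I apply H\"older's inequality at the $L^{6n}$-level: $\|\nabla v\cdot\nabla w\|_{L^{6n}}\leq \|\nabla v\|_{L^{6n}}\|\nabla w\|_{L^\infty}$. The factor $\|\nabla v\|_{L^{6n}}$ is controlled by \eqref{l2kv} at $k=3n$, which produces exactly the denominator $((1+M_0\xi_2)^{-3n}d^{-1}\chi^2 M_0^{2-3n})^{1/(6n)}$ in the target bound, while $\|\nabla w\|_{L^\infty}$ is under control via \eqref{gradw-linfty} at $k=3n$; combining these with the corresponding semigroup time-integral reproduces the first summand of \eqref{gradv-est-fin}.

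For \eqref{w-1infty-est}, the $W^{2,3n}$-elliptic estimate \eqref{int-coro-3} applied to $-\Delta w = u-\bar u$ together with the Sobolev embedding $W^{2,3n}\hookrightarrow W^{1,\infty}$ (valid since $3n>n$) yields $\|\nabla w\|_{L^\infty}\leq C\|u\|_{L^{3n}}\leq C M_3(3n)$, while $\|\Delta w\|_{L^\infty}\leq 2\|u\|_{L^\infty}\leq C M_3(3n)$ follows pointwise from the third equation of \eqref{system} combined with \eqref{u-infty-fin}; the remaining $\|w\|_{L^\infty}$ is controlled by standard elliptic regularity on a domain of finite measure under the zero-mean constraint $\int_\Omega w = 0$. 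The principal obstacle will be the careful bookkeeping of the $d$- and $\xi_2$-dependence in the semigroup step, namely verifying that the rescaling of $e^{t(d\Delta-1)}$ produces exactly the $1/d$ factor on $I_1$ and $I_3$, and the combined $1/d$ together with $[(1+M_0\xi_2)^{-3n}d^{-1}\chi^2 M_0^{2-3n}]^{-1/(6n)}$ factor on $I_2$, so that the resulting estimate matches \eqref{gradv-est-fin} term by term.
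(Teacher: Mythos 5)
Your proposal matches the paper's proof essentially step for step: the paper also rewrites the $v$-equation using $\Delta w=\bar u-u$, rescales time by $d$, applies the Neumann heat semigroup estimates to the same three Duhamel pieces, and feeds in $\|u\|_{L^{3n}}$, $\|\nabla v\|_{L^{6n}}$ from Lemma \ref{high-reg} and $\|v\|_{L^\infty}\leq M_0$, while \eqref{w-1infty-est} likewise comes from the $W^{2,\infty}$-elliptic estimate combined with \eqref{u-infty-fin}. The only cosmetic difference is your H\"older pairing $\|\nabla v\|_{L^{6n}}\|\nabla w\|_{L^{\infty}}$ versus the paper's $\|\nabla v\|_{L^{6n}}\|\nabla w\|_{L^{6n}}$, which yields the same exponent $M_2^{1/(2n)}(3n)$ since $\tfrac{1}{6n}+\tfrac{1}{3n}=\tfrac{1}{2n}$.
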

\begin{proof}
Using the time scaling $\tilde{t}=dt$, we rewrite the second equation in  \eqref{system} as follows:
$$
v_{\tilde{t}}=\Delta v-\frac{1}{d}v+\frac{\xi_2}{d}\nabla w\cdot\nabla v+\frac{\xi_2}{d} v(\bar{u}-u)+\frac{1}{d}u.
$$
Then an application of the variation-of-constants formula yields
\begin{equation}\label{ev-3}
\begin{split}
v(\cdot,\tilde{t})&=e^{(\Delta-\frac{1}{d})\tilde{t}}v_0+\frac{\xi_2}{d} \int_0^{\tilde{t}} e^{(\Delta-\frac{1}{d})(\tilde{t}-s)}(\nabla w\cdot\nabla v)(\cdot,s)ds\\
&\;\;\;\;+\frac{\xi_2}{d}\int_0^{\tilde{t}} e^{(\Delta-\frac{1}{d})(\tilde{t}-s)}v(\cdot,s)(\bar{u}(s)-u(\cdot,s))ds+\frac{1}{d}\int_0^{\tilde{t}} e^{(\Delta-\frac{1}{d})(\tilde{t}-s)}u(\cdot,s)ds.
\end{split}
\end{equation}
Under one of \eqref{bdd-con}, we first  see from Lemma \ref{b-u} and its proof, for some $k>\frac{n}{2}$, that
\begin{equation}\label{gradv-ets1}
\|u(\cdot,\tilde{t})\|_{L^\infty}+\|\nabla v(\cdot,\tilde{t})\|_{L^{2k}}\leq c_{35}, \quad \quad \forall \tilde{t}>0.
\end{equation}
Then the elliptic estimate applied to $\Delta w=\bar{u}-u$ along with Lemma \ref{b-v} shows that
\begin{equation}\label{gradv-ets2}
\|v(\cdot,\tilde{t})\|_{L^\infty}+\|\nabla w(\cdot,\tilde{t})\|_{L^\infty}\leq c_{36}, \quad \quad \forall \tilde{t}>0.
\end{equation}
Then, with \eqref{gradv-ets1} and \eqref{gradv-ets2} at hand, we can apply  the smoothing $L^p$-$L^q$ semigroup estimates in Lemma \ref{SG} to \eqref{ev-3} to infer that $\|\nabla v(\cdot,t)\|_{L^\infty}$ is uniformly bounded. We next aim to derive a qualitative bound for $\|\nabla v(\cdot,t)\|_{L^\infty}$ under one of \eqref{bdd-con++}.  In this circumstance,   Lemma \ref{high-reg} allows us to see, for $\tilde{t}\in (0, dT_{\text{max}})$, that
\begin{equation}\label{uln+gradvl2n}
\|u(\cdot,\tilde{t})\|_{L^{3n}}\leq c_{37}M_2^\frac{1}{3n}(3n), \   \ \    \|\nabla v (\cdot,\tilde{t})\|_{L^{6n}}\leq \frac{ c_{38} M_2^\frac{1}{6n}(3n)}{\left((1+M_0\xi_2)^{-3n}
d^{-1}\chi^2M_0^{2-3n}\right)^\frac{1}{6n}}.
\end{equation}
Hence, applying the $W^{2,\infty}$-estimate to \eqref{w-2p} and using the qualitative bound for $\|u\|_{L^\infty}$ in \eqref{u-infty-fin}, we find that
$$
 \|w(\cdot,\tilde{t})\|_{W^{2,\infty}}\leq c_{39}M_3(3n),  \ \  \tilde{t}\in (0, dT_{\text{max}}).
$$
This directly entails \eqref{w-1infty-est}.

Next, with the aid of \eqref{uln+gradvl2n} and  \eqref{gradw-linfty} with $k=3n$, and keeping  in mind that $\|v(\cdot,\tilde{t})\|_{L^\infty}\leq M_0$ by Lemma \ref{b-v}, we utilize  the H\"{o}lder  inequality and the semigroup estimates in Lemma \ref{SG}  to \eqref{ev-3} to deduce that
\begin{align*}
\|\nabla v(\cdot,\tilde{t})\|_{L^\infty}
&\leq \|\nabla e^{(\Delta-\frac{1}{d})\tilde{t}}v_0\|_{L^\infty}
+\frac{\xi_2}{d} \int_0^{\tilde{t}} \|\nabla e^{(\Delta-\frac{1}{d})(\tilde{t}-s)}(\nabla w\cdot\nabla v)(\cdot,s)\|_{L^\infty}ds \nonumber\\
&\ \ \ \ +\frac{\xi_2}{d}\int_0^{\tilde{t}}\|\nabla e^{(\Delta-\frac{1}{d})(\tilde{t}-s)}v(\cdot,s)(\bar{u}(s)-u(\cdot,s))\|_{L^\infty}
ds \nonumber\\
&\ \ \ \ +\frac{1}{d}\int_0^{\tilde{t}}\|\nabla e^{(\Delta-\frac{1}{d})(\tilde{t}-s)}u(\cdot,s)\|_{L^\infty}ds \nonumber\\
&\leq c_{40}\|\nabla v_0\|_{L^\infty}+\frac{c_{40}\xi_2}{d}\int_0^{\tilde{t}} (1+(\tilde{t}-s)^{-\frac{2}{3}}) e^{-\lambda_1(\tilde{t}-s)}\|\nabla v\|_{L^{6n}}\|\nabla w\|_{L^{6n}}ds\nonumber\\
&\ \ \ \ +\frac{c_{40}\xi_2}{d}\int_0^{\tilde{t}}(1+(\tilde{t}-s)^{-\frac{2}{3}})
e^{-\lambda_1(\tilde{t}-s)}\|v\|_{L^\infty}\|\bar{u}-u\|_{L^{3n}}ds \nonumber\\
&\ \ \ \ +\frac{c_{40}}{d}\int_0^{\tilde{t}}(1+(\tilde{t}-s)^{-\frac{2}{3}})
e^{-\lambda_1(\tilde{t}-s)}\|u\|_{L^{3n}}ds\\
&\leq c_{41}\Bigl(1+\frac{\xi_2 M_2^\frac{1}{2n}(3n)}{d\left((1+M_0\xi_2)^{-3n}
d^{-1}\chi^2M_0^{2-3n}\right)^\frac{1}{6n}}+\frac{\xi_2}{d} M_0 M_2^\frac{1}{3n}(3n)+\frac{M_2^\frac{1}{3n}(3n)}{d}\Bigr),
\end{align*}
which  is our desired qualitative bound  in  \eqref{gradv-est-fin} since $\tilde{t}\in (0, dT_{\text{max}})$ was arbitrary.
\end{proof}

\begin{proof}[{\bf Proof of Theorem \ref{GB}}]
 Based on the extensibility criterion \eqref{KS-1} in Lemma \ref{LS}, the qualitative boundedness described in Theorem \ref{GB} can be traced out from  Lemma \ref{b-v}, Lemma \ref{b-u} and Lemma \ref{ev}. The qualitative bound for $\|v\|_{L^\infty}$ in \eqref{v-infty-bdd} can be easily seen from Lemma  \ref{b-v}. The bounds for $\|\nabla w\|_{L^\infty}$ in \eqref{gradw-infty-bdd} comes mainly from Lemma \ref{high-reg} with $k=n+1$.
\end{proof}

\section{Higher order regularity of solutions}
In section 3, we have established the qualitative boundedness and thus global existence of solutions. To study large time behavior of global bounded solutions, we need further to  enhance regularity properties of bounded solutions.
\begin{lemma}\label{hg}
Let $(u,v,w)$ be the global and bounded classical solution of   \eqref{system} obtained in Theorem \ref{GB}. Then there exist $\sigma\in(0,1)$ and $C_7>0$ such that
\begin{equation}\label{hg-e1}
\|u(\cdot,t)\|_{C^{\sigma,\frac{\sigma}{2}}(\bar{\Omega}\times[t,t+1])}
\leq C_7, \quad \quad \forall t\geq1.
\end{equation}

\end{lemma}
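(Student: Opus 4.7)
The plan is to view the $u$-equation in \eqref{system} as a scalar linear parabolic equation in divergence form with uniformly bounded drift, source and coefficients, and then invoke a standard interior-up-to-boundary Hölder estimate. Concretely, I rewrite the first equation of \eqref{system} as
\begin{equation*}
u_t=\nabla\cdot\bigl(\nabla u+u\mathbf{b}(x,t)\bigr)+f(x,t), \qquad \mathbf{b}:=-\chi\nabla v+\xi_1\nabla w, \qquad f:=u(a-\mu u^\theta),
\end{equation*}
together with the homogeneous Neumann boundary condition $(\nabla u+u\mathbf{b})\cdot\nu=0$ on $\partial\Omega$, which is equivalent to $\partial u/\partial\nu=0$ since $\partial v/\partial\nu=\partial w/\partial\nu=0$.

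The key observation is that Theorem \ref{GB} provides, uniformly for all $t>0$, the bounds
\begin{equation*}
\|u(\cdot,t)\|_{L^\infty}+\|v(\cdot,t)\|_{W^{1,\infty}}+\|w(\cdot,t)\|_{W^{2,\infty}}\leq M.
\end{equation*}
Consequently $\|\mathbf{b}(\cdot,t)\|_{L^\infty}$ and $\|f(\cdot,t)\|_{L^\infty}$ are bounded by constants depending only on $M$, $\chi$, $\xi_1$, $a$, $\mu$ and $\theta$, independently of $t$. The principal part has constant (identity) diffusion coefficient, so the equation falls into the framework to which standard De Giorgi–Nash–Moser / Ladyzhenskaya–Solonnikov–Uralt'ceva type theory for linear parabolic equations with bounded measurable coefficients applies (see, e.g., \cite{La} or \cite{PV93}-style parabolic Hölder estimates up to the boundary for Neumann problems on smooth domains).

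Applied on each cylinder $\bar\Omega\times[t,t+2]$ with $t\geq 0$, this theory yields some $\sigma=\sigma(n,\Omega,M,\chi,\xi_1,a,\mu,\theta)\in(0,1)$ and a constant $C_7$ depending on the same quantities and on $\|u\|_{L^\infty(\Omega\times(t,t+2))}$ such that
\begin{equation*}
\|u\|_{C^{\sigma,\sigma/2}(\bar\Omega\times[t+1,t+2])}\leq C_7\bigl(\|u\|_{L^\infty(\Omega\times(t,t+2))}+\|f\|_{L^\infty(\Omega\times(t,t+2))}\bigr).
\end{equation*}
Since the right-hand side is controlled by $M$ uniformly in $t$, this gives \eqref{hg-e1} after a harmless shift in $t$.

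The main technical obstacle, as usual in such arguments, is to ensure that the Hölder exponent $\sigma$ and constant $C_7$ are genuinely uniform in $t$ rather than drifting with $t$. This is handled by applying the parabolic Hölder estimate on each unit-length time window $[t,t+2]$ with the same structural parameters: because the bounds on $\mathbf{b}$, $f$ and $u$ from Theorem \ref{GB} are time-independent, the estimate has the same form on every such window, and translating back to $[t,t+1]$ gives the uniform conclusion \eqref{hg-e1}. Minor care is needed near $t=0$, which is why the statement is restricted to $t\geq 1$; for $t\in[1,2]$ one simply takes the window $[0,2]$, whose regularity is additionally controlled by the initial regularity \eqref{initial} combined with the local existence result Lemma \ref{LS}.
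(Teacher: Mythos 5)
Your proposal is correct and follows essentially the same route as the paper: both rewrite the $u$-equation in divergence form with flux $\nabla u-\chi u\nabla v+\xi_1 u\nabla w$ and source $u(a-\mu u^{\theta+1-1})$, verify the structural bounds from the uniform estimate \eqref{GB-1}, and invoke the parabolic H\"older regularity theory (the paper uses \cite{Po93}) on translated time cylinders to get $t$-independent constants. The only cosmetic remark is that your citation key for the Porzio--Vespri-type estimate is not one defined in the paper's bibliography; the intended reference is \cite{Po93}.
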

\begin{proof}
In light of  Theorem \ref{GB}, the global  classical solution $(u,v, w)$ satisfies
\begin{equation}\label{hg-e2}
 u>0, \ v>0,  \   u+v+|\nabla v|+|\nabla w|\leq M \text{ on } \Omega\times (0, \infty).
\end{equation}
We now rewrite the first equation of  \eqref{system} in the following form:
\begin{equation*}
u_t=\nabla \cdot D(x,t,\nabla u)+R(x,t),  \quad \quad (x, t)\in\Omega\times (0, \infty),
\end{equation*}
where
$D(x,t,\eta)=\eta-\chi (u\nabla v)(x,t) +\xi_1 (u\nabla w)(x,t)$ and  $R(x,t)=(au-\mu u^{\theta+1})(x,t)$.

In view of the boundedness in \eqref{hg-e2} and the Young inequality, we readily deduce
 \begin{equation*}
 \begin{cases}
   D(x,t,\eta)\cdot\eta\geq \frac{1}{2}|\eta|^2-\frac{(\chi+\xi_1)^2M^4}{2}, \ \ |D(x,t,\eta)|\leq|\eta|+ (\chi+\xi_1)M^2, \\[0.25cm]
   |R(x,t)|\leq  aM+\mu M^{\theta+1}, \ \ \    \forall (x,t,\eta)\in \Omega\times (0, \infty)\times \mathbb{R}^n.
 \end{cases}
 \end{equation*}
Then \eqref{hg-e1} follows from  the H\"older regularity for  parabolic equations \cite[Theorem 1.3]{Po93}.
 \end{proof}

\begin{lemma}\label{up}
There exists a constant  $C_8>0$ such that the global bounded solution of \eqref{system} obtained in Theorem \ref{GB} fulfills
\begin{equation}\label{up-1}
    \|\nabla u(\cdot,t)\|_{L^{2n}}\leq C_8, \quad \quad \forall t\geq 1.
\end{equation}
\end{lemma}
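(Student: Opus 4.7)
The strategy I would follow tracks the author's roadmap outlined in the introduction, namely to climb a ladder of regularity starting from the qualitative $L^\infty$-bounds supplied by Theorem~\ref{GB}. First, I would test the first equation of \eqref{system} by $-\Delta u$ and the second equation by $\Delta^2 v$ (or, more cautiously, by $-\Delta(d\Delta v+\xi_2\nabla\cdot(v\nabla w)+u-v)$ after multiplying by $\Delta v$). Using that all of $u,v,|\nabla v|,|\nabla w|$ are bounded by $M$ (from Theorem~\ref{GB}) and that $\|\Delta w\|_{L^\infty}\leq M$ through the $w$-equation, the cross terms can be controlled via Young's inequality, yielding an ODI of the form
\begin{equation*}
\frac{d}{dt}\Bigl(\|u\|_{L^2}^2+\|\Delta v\|_{L^2}^2\Bigr)+c\Bigl(\|\nabla u\|_{L^2}^2+\|\nabla\Delta v\|_{L^2}^2\Bigr)\leq C\bigl(1+\|u\|_{L^2}^2+\|\Delta v\|_{L^2}^2\bigr),
\end{equation*}
which together with the $L^\infty$-bound on $u$ and the uniform Hölder bound \eqref{hg-e1} gives $\|\Delta v(\cdot,t)\|_{L^2}\leq C$ for $t\geq 1$. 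The boundary term $\int_{\partial\Omega}|\nabla v|^2\partial_\nu|\nabla v|^2/\cdots$ emerging when one tests with $\Delta^2 v$ is treated exactly as in \eqref{pt-convex}--\eqref{nonconvex}, absorbing the non-convex contribution by $\varepsilon\|\nabla\Delta v\|_{L^2}^2$.

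Next, I would test the $u$-equation by $-\Delta u$ to obtain
\begin{equation*}
\frac{1}{2}\frac{d}{dt}\int_\Omega|\nabla u|^2+\int_\Omega|\Delta u|^2=\chi\!\int_\Omega\Delta u\,\nabla\!\cdot\!(u\nabla v)-\xi_1\!\int_\Omega\Delta u\,\nabla\!\cdot\!(u\nabla w)-\int_\Omega\Delta u\cdot u(a-\mu u^\theta)+\tfrac{1}{2}\!\int_{\partial\Omega}\partial_\nu|\nabla u|^2.
\end{equation*}
Expanding the divergences gives terms like $u\Delta v$, $\nabla u\cdot\nabla v$, $u\Delta w=u(\bar u-u)$, and $\nabla u\cdot\nabla w$, all of which are controllable in $L^2$ by $M$, by $\|\Delta v\|_{L^2}$ (from Step~1), and by an $\varepsilon\|\Delta u\|_{L^2}^2$ absorption. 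The boundary integral is handled exactly as in \eqref{pt-convex}--\eqref{nonconvex} using $M$ in place of $\|v\|_{L^\infty}$. This yields $\|\nabla u(\cdot,t)\|_{L^2}\leq C$ for $t\geq 1$.

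The critical third step is to upgrade to $L^{2n}$. I would compute
\begin{equation*}
\frac{1}{2n}\frac{d}{dt}\!\int_\Omega|\nabla u|^{2n}+\int_\Omega|\nabla u|^{2n-2}|D^2u|^2+\tfrac{n-1}{n}\!\int_\Omega|\nabla u|^{2n-4}\bigl|\nabla|\nabla u|^2\bigr|^2\leq \text{(cross terms)}+\tfrac{1}{2}\!\int_{\partial\Omega}|\nabla u|^{2n-2}\partial_\nu|\nabla u|^2,
\end{equation*}
where the cross terms after integration by parts are bounded, up to Young's inequality, by quantities of the shape $\|\nabla u\|_{L^{2n}}^{2n-1}\|\Delta v\|_{L^{2n}}$, $\|\nabla u\|_{L^{2n}}^{2n}$ and lower-order pieces that use $\|u\|_{L^\infty}\leq M$, $\|\nabla w\|_{L^\infty}\leq M$. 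The boundary term is absorbed via \eqref{pt-convex} and the trace inequality, using the $L^2$-bound on $\nabla u$ from Step~2 to close the lower-order loss. This produces an ODI of the form
\begin{equation*}
\frac{d}{dt}\!\int_\Omega|\nabla u|^{2n}+\!\int_\Omega|\nabla u|^{2n}\leq C\Bigl(1+\!\int_\Omega|\Delta v|^{2n}\Bigr).
\end{equation*}

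Finally, I would invoke maximal $L^{2n}$-Sobolev regularity for the heat semigroup applied to the $v$-equation $v_t-d\Delta v+v=\xi_2\nabla\cdot(v\nabla w)+u$: since $\|u\|_{L^\infty}$, $\|v\|_{L^\infty}$, $\|\nabla w\|_{L^\infty}$, $\|\Delta w\|_{L^\infty}\leq M$, the right-hand side is bounded in $L^\infty$, hence in $L^{2n}_t L^{2n}_x$ on any unit time window; maximal regularity then yields $\int_t^{t+1}\|\Delta v(\cdot,s)\|_{L^{2n}}^{2n}ds\leq C$. Feeding this into the above ODI and integrating via Grönwall over successive unit intervals delivers \eqref{up-1}. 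I expect the third step, specifically taming the boundary integral $\int_{\partial\Omega}|\nabla u|^{2n-2}\partial_\nu|\nabla u|^2$ on non-convex $\Omega$ together with the cascade of cross-terms from the joint chemotaxis/convection structure, to be the main technical obstacle; the framework of Lemma~\ref{ve} (using \eqref{inter-1}--\eqref{inter-3} with $g=u$) provides the right template.
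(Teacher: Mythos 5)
Your proposal follows essentially the same route as the paper's proof: first an $L^2$-bound for $\Delta v$ via a coupled energy estimate for $\|u\|_{L^2}^2+\|\Delta v\|_{L^2}^2$ (the paper tests the $u$-equation by $u$ and the gradient of the $v$-equation by $-\nabla\Delta v$, matching your displayed ODI), then an $L^2$-bound for $\nabla u$, then the $L^{2n}$-ODI for $\nabla u$ with the boundary integral absorbed exactly as in \eqref{reg-v7}, and finally maximal Sobolev regularity applied to the rewritten $v$-equation to control $\int|\Delta v|^{2n}$ in time. The steps, the order, and the key tools all coincide with the paper's argument, so the proposal is correct.
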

\begin{proof}
Inspired from \cite[Section 3.3]{LX-20-CVPDE}, we first establish an $L^2$-bound for  $\Delta  v$. To this end, we  first recall from \eqref{hg-e2} and \eqref{cee-1} with $p=2$  of Lemma \ref{cee} that
\begin{equation}\label{ul2-est-f}
\frac{d}{dt}\int_\Omega u^2+\int_\Omega u^2+\int_\Omega |\nabla u|^2\leq c_{42}.
\end{equation}
Next, we take $\nabla $ to  the $v$-equation, and then take dot product with $-\nabla \Delta v$, and finally  use the $w$-equation and repeatedly use integration by parts  to derive from  \eqref{system} that
\begin{equation}\label{deltavl-2}
    \begin{split}
    &\frac{1}{2}\frac{d}{dt}\int_\Omega|\Delta v|^2+\int_\Omega|\Delta v|^2+d\int_\Omega|\nabla \Delta  v|^2\\
    &=-\xi_2 \int_\Omega\nabla\left(\nabla v\cdot\nabla w+\bar{u}v-uv\right)\cdot\nabla \Delta v-\int_\Omega\nabla u\cdot\nabla \Delta v\\
    &=-\xi_2 \int_\Omega\left(D^2 v\cdot \nabla w+D^2 w\cdot \nabla v \right)\cdot\nabla \Delta v+\xi_2\int_\Omega\left(u-\bar{u}\right)\nabla v\cdot\nabla \Delta v\\
    &\;\;\;\;\; +\int_\Omega\left(\xi_2 v-1\right)\nabla u\cdot\nabla \Delta v.
    \end{split}
\end{equation}
Based on \eqref{ul2-est-f} and \eqref{deltavl-2}, by   the boundedness \eqref{hg-e2},  the elliptic estimate $\|D^2 w\|_{L^2}\leq c_{43}\|u\|_{L^2}\leq c_{44}$,  the  Gagliardo-Nirenberg   interpolation inequality  and the $H^3$-elliptic estimate (cf. \cite[(4.19)]{Xiangjde}), we  obtain that
\begin{equation}\label{deltavl-21}
\begin{split}
&-\xi_2 \int_\Omega\left(D^2 v\cdot \nabla w+D^2 w\cdot \nabla v \right)\nabla \Delta v\\
&\leq M\xi_2\|D^2v\|_{L^2}\|\nabla \Delta v\|_{L^2}+\frac{d}{7}\|\nabla \Delta v\|_{L^2}^2+c_{45}\\
&\leq c_{46} \left(\|D^3v\|_{L^2}^\frac{2}{3}\|v\|_{L^2}^\frac{1}{3}+\|v\|_{L^2}\right)\|\nabla \Delta v\|_{L^2}+\frac{d}{7}\|\nabla \Delta v\|_{L^2}^2+c_{45}\\
&\leq c_{47}\|v\|_{H^3}^\frac{2}{3}\|\nabla \Delta v\|_{L^2}+\frac{d}{6}\|\nabla \Delta v\|_{L^2}^2+c_{47}\\
&\leq c_{48}\left(\|\Delta v\|_{H^1}+\|v\|_{L^2}\right)^\frac{2}{3}\|\nabla \Delta v\|_{L^2}+\frac{d}{6}\|\nabla \Delta v\|_{L^2}^2+c_{47}\\
&\leq c_{49} \left(\|\nabla \Delta v\|_{L^2}+\|v\|_{L^2}\right)^\frac{2}{3}\|\nabla \Delta v\|_{L^2}+\frac{d}{6}\|\nabla \Delta v\|_{L^2}^2+c_{47}\\
&\leq \frac{d}{5}\|\nabla \Delta v\|_{L^2}^2+c_{50}.
\end{split}
\end{equation}
In easier ways, we again use the boundedness \eqref{hg-e2} to estimate
\begin{equation}\label{deltavl-22}
\begin{split}
&\xi_2\int_\Omega\left(u-\bar{u}\right)\nabla v\cdot\nabla \Delta v+\int_\Omega\left(\xi_2 v-1\right)\nabla u\cdot\nabla \Delta v\\
&\leq \frac{3d}{5} \int_\Omega|\nabla \Delta v|^2+\frac{(1+M\xi_2)^2}{2d}\int_\Omega|\nabla u|^2+c_{51}.
    \end{split}
\end{equation}
Substituting \eqref{deltavl-21} and \eqref{deltavl-22}  into \eqref{deltavl-2}, we conclude that
\begin{equation}\label{deltavl-2f}
  \frac{d}{dt}\int_\Omega|\Delta v|^2+2\int_\Omega|\Delta v|^2\leq \frac{(1+M\xi_2)^2}{d}\int_\Omega|\nabla u|^2+2c_{50}+2c_{51}.
    \end{equation}
An obvious combination from \eqref{ul2-est-f}  and \eqref{deltavl-2f} enables one to derive that
$$
\frac{d}{dt}\int_\Omega\left( \frac{(1+M\xi_2)^2}{d}u^2+|\Delta v|^2\right)+\int_\Omega\left(\frac{(1+M\xi_2)^2}{d}u^2+|\Delta v|^2\right)\leq c_{52},
$$
which along with $H^2$-elliptic estimate  yields a uniform $H^2$-bound for $ v$:
\begin{equation} \label{delta-v-est}
\|D^2 v(\cdot,t)\|_{L^2}+\|\Delta v(\cdot,t)\|_{L^2}\leq c_{53}, \quad \quad \forall t\geq 1.
\end{equation}
Next, we proceed to derive an $L^2$-bound for $\nabla u$. For this, we compute from \eqref{system} that
\begin{equation}\label{ul2-id}
    \begin{split}
    &\frac{d}{dt}\int_\Omega|\nabla u|^2+\int_\Omega|\nabla u|^2+2\int_\Omega|\Delta  u|^2\\
    &=2\chi \int_\Omega\left(\nabla u\cdot\nabla v+u\Delta v\right)\Delta u-2\xi_1 \int_\Omega\left(\nabla u\cdot\nabla w+u\bar{u}-u^2\right)\Delta u\\
     &\ \ -2\int_\Omega (au-\mu u^{\theta+1})\Delta u+\int_\Omega|\nabla u|^2.
    \end{split}
\end{equation}
Using   the  boundedness \eqref{hg-e2},  \eqref{delta-v-est}, the Gagliardo-Nirenberg  inequality and the $H^2$-elliptic estimate, we deduce that
\begin{equation*}
    \begin{split}
    &2\chi \int_\Omega\left(\nabla u\nabla v+u\Delta v\right)\Delta u-2\xi_1 \int_\Omega\left(\nabla u\nabla w+u\bar{u}-u^2\right)\Delta u\\
     &\ \ -2\int_\Omega (au-\mu u^{\theta+1})\Delta u+\int_\Omega|\nabla u|^2\\
     &\leq \int_\Omega|\Delta u|^2+\left[1+4M^2(\chi^2+\xi^2_1)\right]\int_\Omega|\nabla u|^2+4M^2\chi^2\int_\Omega|\Delta v|^2+c_{54}\\
     &\leq  \int_\Omega|\Delta u|^2+c_{55}\left(\|D^2u\|_{L^2}^\frac{1}{2}\|u\|_{L^2}^\frac{1}{2}+\|u\|_{L^2}\right)^2+c_{55}\\
     &\leq  \int_\Omega|\Delta u|^2+c_{56}\|D^2u\|_{L^2}+c_{56}\\
     &\leq \int_\Omega|\Delta u|^2+c_{57}\left(\|\Delta u\|_{L^2}+\| u\|_{L^2}\right)+c_{57}\\
     &\leq 2\int_\Omega|\Delta u|^2+c_{58}.
    \end{split}
\end{equation*}
Substituting this into  \eqref{ul2-id} entails a simple ODI as follows:
   $$
    \frac{d}{dt}\int_\Omega|\nabla u|^2+\int_\Omega|\nabla u|^2\leq c_{58},
   $$
yielding immediately a uniform $L^2$-bound for $\nabla u$:
\begin{equation} \label{gradu-l2-est}
\|\nabla u(\cdot,t)\|_{L^2}\leq c_{59}, \quad \quad \forall t\geq 1.
\end{equation}

We are now ready to show the uniform boundedness of $\|\nabla u(\cdot,t)\|_{L^{2n}}$.  Again, we apply  integration by parts  to compute  from  \eqref{system} that
 \begin{equation}\label{up-2}
    \begin{split}
        \frac{1}{2n}\frac{d}{dt}\int_\Omega|\nabla u|^{2n}&=
        \int_\Omega|\nabla u|^{2n-2}\nabla u\cdot\nabla\Delta u-\chi\int_\Omega|\nabla u|^{2n-2}\nabla u\cdot\nabla(\nabla\cdot(u\nabla v))\\
        &\;\;\;+\xi_1\int_\Omega|\nabla u|^{2n-2}\nabla u\cdot\nabla(\nabla\cdot (u\nabla w))\\
        &\;\;\;+\int_\Omega|\nabla u|^{2n-2}\nabla u\cdot\nabla(au-\mu u^{\theta+1})=:\sum_{i=1}^4 J_i.
    \end{split}
\end{equation}
 For $J_1$, noticing the fact that  $2\nabla u\cdot\nabla\Delta u=\Delta|\nabla u|^2-2|D^2u|^2$ and the $L^2$-boundedness of $\nabla u$ in \eqref{gradu-l2-est}, we employ the way to control boundary integral as done in \eqref{reg-v7}  and the GN inequality  to  estimate that
\begin{equation}\label{J1}
    \begin{split}
        J_1&=\frac{1}{2}\int_\Omega|\nabla u|^{2n-2}\Delta|\nabla u|^2-\int_\Omega|\nabla u|^{2n-2}|D^2u|^2\\
        &=\frac{1}{2}\int_{\partial\Omega}|\nabla u|^{2n-2}\frac{\partial|\nabla u|^2}{\partial\nu}-\frac{(n-1)}{2}\int_\Omega|\nabla u|^{2n-4}|\nabla|\nabla u|^2|^2-\int_\Omega|\nabla u|^{2n-2}|D^2u|^2\\
        &\leq -\frac{(n-1)}{3}\int_\Omega|\nabla u|^{2n-4}|\nabla|\nabla u|^2|^2-\frac{1}{2}\int_\Omega|\nabla u|^{2n-2}|D^2u|^2+c_{60}.
    \end{split}
\end{equation}
By the inequality $|\Delta u|\leq\sqrt{n}|D^2u|$ and the boundedness \eqref{hg-e2} and \eqref{delta-v-est}, we  use integration by parts to bound  $J_2$ as follows:
\begin{equation}\label{J2}
\begin{split}
J_2=&\chi\int_\Omega\nabla|\nabla u|^{2n-2}\cdot\nabla u\nabla\cdot(u\nabla v)+\chi\int_\Omega|\nabla u|^{2n-2}\Delta u\nabla\cdot(u\nabla v)\\
=&\chi (n-1)\int_\Omega |\nabla u|^{2(n-2)}\nabla |\nabla u|^2\cdot\nabla u(\nabla u\cdot \nabla v+u\Delta v)\\
&+\chi\int_\Omega|\nabla u|^{2n-2}\Delta u(\nabla u\cdot \nabla v+u\Delta v)\\
\leq& M\chi(n-1)\int_\Omega|\nabla u|^{2n-3}|\nabla|\nabla u|^2|(|\nabla u|+|\Delta v|)\\
        &\;\;\;+M\chi\sqrt{n}\int_\Omega|\nabla u|^{2n-2}|D^2u|(|\nabla u|+|\Delta v|)\\
\leq &\frac{(n-1)}{8}\int_\Omega|\nabla u|^{2n-4}|\nabla|\nabla u|^2|^2+\frac{1}{8}\int_\Omega|\nabla u|^{2n-2}|D^2u|^2\\
&+4(2n-1)M^2\chi^2\int_\Omega|\nabla u|^{2n}+4(2n-1)M^2\chi^2\int_\Omega|\nabla u|^{2n-2}|\Delta v|^2.
\end{split}
\end{equation}
In a similar way, since $|\Delta w|=|u-\bar{u}|\leq 2M$,  and using   Young's inequality, we estimate $J_3$ as follows:
\begin{equation}\label{J3}
\begin{split}
J_3=&-\xi_1\int_\Omega\nabla|\nabla u|^{2n-2}\cdot\nabla u\nabla\cdot(u\nabla w)-\xi_1\int_\Omega|\nabla u|^{2n-2}\Delta u\cdot\nabla(u\nabla w)\\
\leq& M\xi_1(n-1)\int_\Omega|\nabla u|^{2n-3}|\nabla|\nabla u|^2|(|\nabla u|+|\Delta w|)\\
        &\;\;\;+M\xi_1\sqrt{n}\int_\Omega|\nabla u|^{2n-2}|D^2u|(|\nabla u|+|\Delta w|)\\
\leq &\frac{(n-1)}{8}\int_\Omega|\nabla u|^{2n-4}|\nabla|\nabla u|^2|^2+\frac{1}{8}\int_\Omega|\nabla u|^{2n-2}|D^2u|^2\\
&+4(2n-1)M^2\xi_1^2\int_\Omega|\nabla u|^{2n}+16(2n-1)M^4\xi_1^2\int_\Omega|\nabla u|^{2n-2}\\
\leq&\frac{(n-1)}{8}\int_\Omega|\nabla u|^{2n-4}|\nabla|\nabla u|^2|^2+\frac{1}{8}\int_\Omega|\nabla u|^{2n-2}|D^2u|^2\\
&+2(4n-1)M^2\xi_1^2\int_\Omega|\nabla u|^{2n}+c_{61}.
\end{split}
\end{equation}
At last, the term $J_4$ can be easily computed:
\begin{equation}\label{J4}
J_4= a\int_\Omega |\nabla u|^{2n}-\mu(\theta+1)\int_\Omega u^\theta |\nabla u|^{2n}.
\end{equation}
Substituting \eqref{J1}, \eqref{J2}, \eqref{J3} and \eqref{J4} into \eqref{up-2} and using Young's inequality, one has a differential  inequality of the form:
\begin{equation}\label{up-3}
\begin{split}
&\frac{d}{dt}\int_\Omega|\nabla u|^{2n}+2n\int_\Omega|\nabla u|^{2n}+\frac{(n-1)n}{6}\int_\Omega|\nabla u|^{2n-4}|\nabla|\nabla u|^2|^2\\
&\ \ \ +\frac{n}{2}\int_\Omega|\nabla u|^{2n-2}|D^2u|^2\\
&\leq c_{62} \int_\Omega |\nabla u|^{2n}+c_{62}\int_\Omega|\nabla u|^{2n-2}|\Delta v|^2+c_{62}\\
&\leq c_{63} \int_\Omega |\nabla u|^{2n}+c_{63}\int_\Omega |\Delta v|^{2n}+c_{63}.
\end{split}
\end{equation}
Since  $\|u(\cdot,t)\|_{L^\infty}\leq M$ in  \eqref{hg-e2},  the Young's inequality enables one to deduce that
\begin{align*}
    2c_{63}\int_\Omega|\nabla u|^{2n}&=2c_{63}\left(-(n-1)\int_\Omega u|\nabla u|^{2n-4}\nabla|\nabla u|^2\cdot\nabla u-\int_\Omega u|\nabla u|^{2n-2}\Delta u\right)\\
    &\leq 2c_{63}M\left((n-1)\int_\Omega |\nabla u|^{2n-3}|\nabla|\nabla u|^2|+\sqrt{n}\int_\Omega |\nabla u|^{2n-2}|D^2u|\right)\\
    &\leq\frac{n(n-1)}{6}\int_\Omega |\nabla u|^{2n-4}|\nabla|\nabla u|^2|^2+\frac{n}{2}\int_\Omega|\nabla u|^{2n-2}|D^2u|^2+
    c_{64}\int_\Omega|\nabla u|^{2n-2}\\
    &\leq\frac{n(n-1)}{6}\int_\Omega |\nabla u|^{2n-4}|\nabla|\nabla u|^2|^2+\frac{n}{2}\int_\Omega|\nabla u|^{2n-2}|D^2u|^2\\
    &\;\;\;+c_{63}\int_\Omega|\nabla u|^{2n}+c_{65},
\end{align*}
which, upon being substituted  into \eqref{up-3},  allows us to conclude
\begin{align*}
 \frac{d}{dt}\int_\Omega|\nabla u|^{2n}+2n\int_\Omega|\nabla u|^{2n}\leq c_{63}\int_\Omega|\Delta v|^{2n}+c_{66}.
\end{align*}
Solving this simple differential inequality via integrating factor method, we get
\begin{equation}\label{up-8}
       \int_\Omega|\nabla u(\cdot,t)|^{2n}\leq \frac{c_{66}}{2n}+\int_\Omega|\nabla u(\cdot,1)|^{2n} +c_{63}e^{-2nt}\int_1^t e^{2ns}\int_\Omega|\Delta v(\cdot,s)|^{2n}ds, \quad  \forall t\geq1.
\end{equation}
 Next, we apply the widely known maximal Sobolev regularity to the $v$-equation to bound the  third term on the right.  To this end, with the help of  the $w$-equation, we first rewrite the $v$-equation  in  \eqref{system}  in the following form:
\begin{equation}\label{up-9}
\begin{split}
v_t&=d\Delta v-v+\xi_2\nabla v\cdot\nabla w+\xi_2 v(\bar{u}-u)+u.\\
\end{split}
\end{equation}
Let  $h(x,t):=\xi_2\nabla v\cdot\nabla w+\xi_2 v(\bar{u}-u)+u$. Then it follows from the boundedness  \eqref{hg-e2} that
\begin{equation}\label{up-10}
|h|\leq M^2\xi_2+2M^2\xi_2+M=3M^2\xi_2+M \text{  on   } \Omega \times (0, \infty).
\end{equation}
Now, an application of the  maximal Sobolev regularity to \eqref{up-9} with \eqref{up-10} shows that
\begin{equation*}
\begin{split}
    \int_1^te^{2ns}\int_\Omega|\Delta v(\cdot, t)|^{2n}
    &\leq c_{67}\left(e^{2n}\int_\Omega|\Delta v(\cdot,1)|^{2n}+\int_1^te^{2ns}\int_\Omega|h|^{2n}\right)\\
    &\leq  c_{67}\left(e^{2n}\int_\Omega|\Delta v(\cdot,1)|^{2n}+\frac{(3M^2\xi_2+M)^{2n}|\Omega|}{2n}e^{2nt}\right),
\end{split}
\end{equation*}
which, substituted into \eqref{up-8}, gives, for $\forall t\geq 1$,
\begin{equation*}
       \int_\Omega|\nabla u(\cdot,t)|^{2n}
       \leq \frac{c_{66}}{2n}+\int_\Omega|\nabla u(\cdot,1)|^{2n} +c_{63}c_{67}\left(\int_\Omega|\Delta v(\cdot,1)|^{2n}+\frac{(3M^2\xi_2+M)^{2n}|\Omega|}{2n}\right),
\end{equation*}
 yielding  our desired gradient estimate \eqref{up-1}.
\end{proof}

\section{Global stability: Proof of Theorem \ref{LT}}
Given the enhanced regularity properties in the preceding section,   we shall examine  the long time dynamics of bounded solutions as obtained in Theorem \ref{GB}. Under certain conditions, we shall show convergence and exponential convergence  of bounded solutions  to the unique constant steady state as  time  goes to infinity.

\subsection{Case 1: $a=\mu=0$} In this case, the system  \eqref{system} becomes
\begin{equation}\label{1-1}
    \begin{cases}
    u_t=\Delta u-\chi\nabla\cdot(u\nabla v)+\xi_1\nabla\cdot(u\nabla w), &x\in\Omega, t>0,\\
    v_t=d\Delta v+\xi_2\nabla\cdot(v\nabla w)+u-v, &x\in\Omega, t>0,\\
    0=\Delta w+u-\bar{u}_0, \  \   \int_\Omega w=0, & x\in\Omega, t>0,\\
    \frac{\partial u}{\partial\nu}=\frac{\partial v}{\partial\nu}=\frac{\partial w}{\partial\nu}=0, &x\in\partial\Omega, t>0,\\
    u(x,0)=u_0(x),~v(x,0)=v_0(x),&x\in\Omega.
    \end{cases}
\end{equation}

\begin{lemma}\label{GS}
 The  global classical solution of \eqref{1-1} satisfies  the following identity:
\begin{equation}\label{GS-1}
\begin{split}
&\frac{d}{dt}\left(\int_\Omega u\ln \frac{u}{\bar{u}}+\frac{\chi}{2}\int_\Omega |\nabla v|^2\right)+\int_\Omega \frac{|\nabla u|^2}{u}+d\chi\int_\Omega |\Delta v|^2+\chi \int_\Omega |\nabla v|^2+\xi_1\int_\Omega |\Delta w|^2\\
&=2\chi\int_\Omega \Delta w\cdot \Delta v-\chi\xi_2\int_\Omega \nabla \cdot(v\nabla w)\Delta v.
\end{split}
\end{equation}
\end{lemma}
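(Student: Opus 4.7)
The plan is to differentiate the two pieces of the functional separately, apply the $u$- and $v$-equations together with Neumann integration by parts, and then combine the mixed terms using the $w$-equation $\Delta w=\bar u-u$. Since $a=\mu=0$ forces $\int_\Omega u_t=0$, the mean $\bar u=\bar u_0$ is time-independent, which is what makes the entropy-like quantity $\int_\Omega u\ln(u/\bar u)$ well-behaved.

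First I would compute
\[
\frac{d}{dt}\int_\Omega u\ln\frac{u}{\bar u}=\int_\Omega u_t\ln\frac{u}{\bar u}+\int_\Omega u_t=\int_\Omega u_t\ln\frac{u}{\bar u},
\]
plug in $u_t=\Delta u-\chi\nabla\cdot(u\nabla v)+\xi_1\nabla\cdot(u\nabla w)$, and integrate by parts (the boundary terms vanish by the Neumann conditions). The Laplacian contributes the dissipation $-\int_\Omega|\nabla u|^2/u$, the chemotactic term contributes $\chi\int_\Omega\nabla u\cdot\nabla v$, and the repulsive term contributes $-\xi_1\int_\Omega\nabla u\cdot\nabla w=\xi_1\int_\Omega u\Delta w=-\xi_1\int_\Omega|\Delta w|^2$, where in the last step I used $\Delta w=\bar u-u$ together with $\int_\Omega(u-\bar u)=0$ to rewrite $\int_\Omega u(\bar u-u)=-\int_\Omega(u-\bar u)^2=-\int_\Omega|\Delta w|^2$.

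Next I would handle $\frac{\chi}{2}\frac{d}{dt}\int_\Omega|\nabla v|^2=-\chi\int_\Omega\Delta v\,v_t$, and substitute $v_t=d\Delta v+\xi_2\nabla\cdot(v\nabla w)+u-v$. This yields the four terms $-d\chi\int_\Omega|\Delta v|^2$, $-\chi\xi_2\int_\Omega\nabla\cdot(v\nabla w)\,\Delta v$, $-\chi\int_\Omega u\Delta v$, and $-\chi\int_\Omega v\Delta v=\chi\int_\Omega|\nabla v|^2$ (so the last one becomes $-\chi\int_\Omega|\nabla v|^2$ once placed on the left side of the identity).

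The only step that requires a small trick is combining the two residual mixed terms. Adding the contributions from the two differentiations gives, besides everything already matching the claim, the combination $\chi\int_\Omega\nabla u\cdot\nabla v-\chi\int_\Omega u\Delta v$. Integrating by parts the first of these shows it equals $-\chi\int_\Omega u\Delta v$, so together they sum to $-2\chi\int_\Omega u\Delta v$. Using $\Delta w=\bar u-u$ and $\int_\Omega\Delta v=0$ (Neumann), one rewrites
\[
-2\chi\int_\Omega u\Delta v=-2\chi\int_\Omega(u-\bar u_0)\Delta v=2\chi\int_\Omega\Delta w\,\Delta v,
\]
which is precisely the first term on the right-hand side of \eqref{GS-1}. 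Rearranging so that the dissipation quantities sit on the left then produces the asserted identity. There is no serious obstacle: the computation is purely algebraic once one notices the cancellation via $\Delta w=\bar u-u$.
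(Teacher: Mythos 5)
Your proposal is correct in substance and follows essentially the same route as the paper: test the $u$-equation with $\ln(u/\bar u)$, test the $v$-equation with $-\chi\Delta v$, and convert the mixed terms via $\Delta w=\bar u-u$ together with $\int_\Omega(u-\bar u)=0$ and $\int_\Omega\Delta v=0$. One sign slip: the contribution of the $-v$ term in $v_t$ to $-\chi\int_\Omega\Delta v\,v_t$ is $+\chi\int_\Omega v\Delta v=-\chi\int_\Omega|\nabla v|^2$, not $-\chi\int_\Omega v\Delta v$; with the correct sign this term moves to the left-hand side as $+\chi\int_\Omega|\nabla v|^2$, exactly as in \eqref{GS-1}, whereas your version as written would place $-\chi\int_\Omega|\nabla v|^2$ there. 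Everything else, including $-\xi_1\int_\Omega\nabla u\cdot\nabla w=-\xi_1\int_\Omega|\Delta w|^2$ and the combination $\chi\int_\Omega\nabla u\cdot\nabla v-\chi\int_\Omega u\Delta v=2\chi\int_\Omega\Delta w\,\Delta v$, matches the paper's computation.
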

\begin{proof}
Multiplying the first equation of \eqref{1-1} by $\ln u-\ln \bar{u}$, we have
\begin{equation}\label{GS-2}
\frac{d}{dt}\int_\Omega u\ln \frac{u}{\bar{u}}+\int_\Omega \frac{|\nabla u|^2}{u}=\chi\int_\Omega \nabla u\cdot \nabla v-\xi_1\int_\Omega \nabla u\cdot \nabla w.
\end{equation}
We multiply the second equation of \eqref{1-1} by $-\Delta v$ to obtain
\begin{equation}\label{GS-3}
\frac{1}{2}\frac{d}{dt}\int_\Omega |\nabla v|^2+d\int_\Omega |\Delta v|^2+\int_\Omega |\nabla v|^2=\int_\Omega \nabla u\cdot \nabla v-\xi_2\int_\Omega \nabla \cdot(v\nabla w)\Delta v.
\end{equation}
Then multiplying \eqref{GS-3} by $\chi$ and adding it to \eqref{GS-2}, and using the facts $u=\bar{u}_0-\Delta w$ and $\int_\Omega \Delta v=0=\int_\Omega \Delta w$ due to homogeneous boundary conditions, we obtain
\begin{equation*}
\begin{split}
&\frac{d}{dt}\left(\int_\Omega u\ln \frac{u}{\bar{u}}+\frac{\chi}{2}\int_\Omega |\nabla v|^2\right)+\int_\Omega \frac{|\nabla u|^2}{u}+d\chi\int_\Omega |\Delta v|^2+\chi\int_\Omega |\nabla v|^2\\
&=2\chi \int_\Omega \nabla u\cdot \nabla v-\xi_1\int_\Omega \nabla u\cdot \nabla w-\chi\xi_2\int_\Omega \nabla \cdot(v\nabla w)\Delta v\\
&=2\chi \int_\Omega \Delta w\cdot\Delta v-\xi_1\int_\Omega |\Delta w|^2-\chi\xi_2\int_\Omega \nabla \cdot(v\nabla w)\Delta v,
\end{split}
\end{equation*}
which gives rise to \eqref{GS-1}.
\end{proof}
In the limiting case of $\chi=0$, there is no bad terms to be estimated by \eqref{GS-1}, we can directly jump to \eqref{GS-10} below,  and hence its proof is simpler. We  proceed to treat the hard case of $\chi>0$, for that purpose,  we denote
\begin{equation}\label{AB-def}
A=A(d)=\sup_{(x,t)\in\Omega\times(0, \infty)}v(x,t), \quad \quad \quad  B=B(d)=\sup_{(x,t)\in\Omega\times(0, \infty)}|\nabla w(x,t)|.
\end{equation}
Then by the qualitative bounds for $v$ and $\nabla w$ in \eqref{v-infty-bdd} and \eqref{gradw-infty-bdd} with \eqref{m1c-def} and \eqref{d-omega}, we see  that they are bounded for large $d$ and hence
\begin{equation}\label{d*-def}
d_0(\chi)=\inf\left\{\hat{d}_0>0: \   \left(d- \frac{\left(2+A\xi_2\right)^2\chi}{4\xi_1}- \frac{B^2\xi_2^2}{4}\right)\chi \geq 0 \text{ for all }  d\geq \hat{d}_0\right\}<+\infty.
\end{equation}
This allows us to include the  the limiting case of $\chi=0$, which corresponds to $d_0(0)=0$.
\begin{lemma}\label{GS-4}
Assume $\xi_1\geq \xi_0\chi^2$ and $d\geq d_0(\chi)$. Then the global classical  solution $(u,v,w)$ of \eqref{1-1} satisfies the following estimates:
\begin{equation}\label{GS-5}
\int_1^t\int_\Omega \frac{|\nabla u|^2}{u}\leq C_9, \quad \quad \forall t>1.
\end{equation}
In addition, if $d>d_0(\chi)$, then there exists $\zeta_1>0$ such that
\begin{equation}\label{GS-5*}
\|u(\cdot, t)-\bar{u}_0\|_{L^1}+\|\nabla v(\cdot, t)\|_{L^2}\leq C_{10}e^{-\zeta_1 t}, \quad \quad \forall t>0.
\end{equation}
\end{lemma}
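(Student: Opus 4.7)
The natural strategy is to exploit the identity \eqref{GS-1} of the preceding Lemma \ref{GS} as a Lyapunov-type equality and then absorb its right-hand side into the dissipative terms on the left using the $L^\infty$ bounds on $v$ and $\nabla w$ supplied by Theorem \ref{GB}. The key observation will be that the threshold \eqref{d*-def} is precisely the sharpest choice of Young's-inequality parameters.

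\emph{Step 1 --- Absorption.} Expanding $\nabla\cdot(v\nabla w)=\nabla v\cdot\nabla w+v\Delta w$ so that the right-hand side of \eqref{GS-1} becomes $\chi(2-\xi_2 v)\Delta w\,\Delta v-\chi\xi_2(\nabla v\cdot\nabla w)\Delta v$, and then using $\|v\|_{L^\infty}\le A$, $\|\nabla w\|_{L^\infty}\le B$ together with Young's inequality, I would estimate
\begin{align*}
\chi|2-\xi_2 v|\,|\Delta w|\,|\Delta v|&\le \xi_1|\Delta w|^2+\tfrac{\chi^2(2+A\xi_2)^2}{4\xi_1}|\Delta v|^2,\\
\chi\xi_2|\nabla v\cdot\nabla w|\,|\Delta v|&\le \delta\chi|\Delta v|^2+\tfrac{\chi\xi_2^2 B^2}{4\delta}|\nabla v|^2.
\end{align*}
The decisive balance is the choice $\delta=\xi_2^2 B^2/4$, which makes the $|\nabla v|^2$-coefficient on the right equal to $\chi$ and thus exactly cancels the $\chi\int|\nabla v|^2$ term on the left, while $\xi_1\int|\Delta w|^2$ absorbs the corresponding $\xi_1|\Delta w|^2$ contribution. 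Substituting back into \eqref{GS-1} leads to
\begin{equation*}
\tfrac{d}{dt}E(t)+\int_\Omega \tfrac{|\nabla u|^2}{u}+\chi\!\left(d-\tfrac{\chi(2+A\xi_2)^2}{4\xi_1}-\tfrac{\xi_2^2 B^2}{4}\right)\!\int_\Omega|\Delta v|^2\le 0,
\end{equation*}
where $E(t):=\int_\Omega u\ln(u/\bar u_0)+\tfrac{\chi}{2}\int_\Omega|\nabla v|^2\ge 0$ by Lemma \ref{II}. The bracket is precisely what \eqref{d*-def} forces to be non-negative whenever $d\ge d_0(\chi)$.

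\emph{Step 2 --- The integral bound \eqref{GS-5}.} Since the $|\Delta v|^2$-coefficient is non-negative, integrating the inequality from $1$ to $t$ and using $E(t)\ge 0$ immediately gives
\begin{equation*}
\int_1^t\!\int_\Omega \tfrac{|\nabla u(\cdot,s)|^2}{u(\cdot,s)}\,ds\le E(1),
\end{equation*}
which is uniformly bounded in view of the qualitative $L^\infty$ bounds \eqref{hg-e2}, yielding \eqref{GS-5}.

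\emph{Step 3 --- Exponential decay \eqref{GS-5*}.} When $d>d_0(\chi)$ strictly, denote the $|\Delta v|^2$-coefficient by $\eta>0$. I would convert the dissipation into a Gr\"onwall inequality for $E$ via two lower bounds. First, since $a=\mu=0$ conserves mass, $\bar u(t)\equiv \bar u_0$, and combining the uniform bound $u\le M$ with Poincar\'e's inequality and the upper half of Lemma \ref{II} gives
\begin{equation*}
\int_\Omega \tfrac{|\nabla u|^2}{u}\ge \tfrac{1}{M}\|\nabla u\|_{L^2}^2\ge \tfrac{\lambda_1}{M}\|u-\bar u_0\|_{L^2}^2\ge \tfrac{\lambda_1\bar u_0}{M}\int_\Omega u\ln\tfrac{u}{\bar u_0}.
\end{equation*}
Second, decomposing $v=\bar v(t)+\tilde v$ with $\int_\Omega\tilde v=0$, integration by parts and Poincar\'e--Wirtinger yield
\begin{equation*}
\|\nabla v\|_{L^2}^2=-\!\int_\Omega \tilde v\,\Delta v\le \|\tilde v\|_{L^2}\|\Delta v\|_{L^2}\le C_p\|\nabla v\|_{L^2}\|\Delta v\|_{L^2},
\end{equation*}
so $\|\Delta v\|_{L^2}^2\ge C_p^{-2}\|\nabla v\|_{L^2}^2$. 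Plugging both into Step~1 produces $E'(t)+2\zeta_1 E(t)\le 0$ for some $\zeta_1>0$, hence $E(t)\le E(0)\,e^{-2\zeta_1 t}$. Finally, the lower half of Lemma \ref{II} gives $\|u-\bar u_0\|_{L^1}^2\le 2\bar u_0 E(t)$, and $\|\nabla v\|_{L^2}^2\le 2E(t)/\chi$ (when $\chi>0$; the limiting case $\chi=0$ reduces to a separate but easier testing of the $v$-equation against $-\Delta v$), which yields \eqref{GS-5*}.

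\emph{Expected main obstacle.} The only non-routine point is the sharp balance in Step~1: only the specific choice $\delta=\xi_2^2 B^2/4$ simultaneously eliminates the $\chi\int|\nabla v|^2$ dissipation and reproduces the exact threshold appearing in \eqref{d*-def}. Any cruder Young's splitting would introduce an additional lower bound on $d$ and blur the critical diffusivity $d_0(\chi)$; conversely, one must verify that the $L^\infty$-bounds $A$, $B$ of \eqref{AB-def} are finite for every $d\ge d_0(\chi)$, which is exactly what \eqref{v-infty-bdd}--\eqref{gradw-infty-bdd} of Theorem \ref{GB} guarantee.
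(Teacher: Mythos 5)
Your proposal is correct and follows essentially the same route as the paper: the same Lyapunov identity \eqref{GS-1}, the same expansion of $\nabla\cdot(v\nabla w)$, the same first Young step absorbing $\xi_1\int_\Omega|\Delta w|^2$, and the same Poincar\'e/logarithmic-Sobolev-type lower bound $\int_\Omega\frac{|\nabla u|^2}{u}\geq c\int_\Omega u\ln\frac{u}{\bar u}$ for the Gr\"onwall step. The only genuine difference is how the residual dissipation for $\|\nabla v\|_{L^2}^2$ is produced: the paper splits the term $B\chi\xi_2\int_\Omega|\nabla v||\Delta v|$ with the parameter $(1-\varepsilon_1)$ of \eqref{dv} so as to keep $\varepsilon_1\chi\int_\Omega|\nabla v|^2$ on the left, whereas you use the critical splitting (which exactly consumes $\chi\int_\Omega|\nabla v|^2$) and then recover the $\nabla v$-decay from the leftover $\eta\int_\Omega|\Delta v|^2$ via $\|\nabla v\|_{L^2}\leq C_p\|\Delta v\|_{L^2}$; both yield the same threshold $d_0(\chi)$ and the same conclusion, so this is a matter of taste rather than substance. (Minor point: as in the paper, one should run the Gr\"onwall argument from $t=1$ rather than $t=0$ and adjust constants on $[0,1]$, and the degenerate case $\chi=0$ is handled separately exactly as you indicate.)
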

\begin{proof}
Using the definitions of $A$ and $B$ in  \eqref{AB-def}, we infer from  \eqref{GS-1}  of Lemma \ref{GS} that
\begin{equation}\label{GS-6}
\begin{split}
&\frac{d}{dt}\left(\int_\Omega u\ln \frac{u}{\bar{u}}+\frac{\chi}{2}\int_\Omega |\nabla v|^2\right)+\int_\Omega \frac{|\nabla u|^2}{u}+d\chi\int_\Omega |\Delta v|^2+\chi \int_\Omega |\nabla v|^2+\xi_1\int_\Omega |\Delta w|^2\\
&=2\chi\int_\Omega \Delta w\cdot \Delta v-\chi\xi_2\int_\Omega v\Delta w \cdot\Delta v-\xi_2\chi\int_\Omega \nabla v\cdot \nabla w \cdot\Delta v\\
&\leq \left(2+A  \xi_2\right)\chi\int_\Omega |\Delta w||\Delta v|+B\chi\xi_2\int_\Omega |\nabla v||\Delta v|.
\end{split}
\end{equation}
Using the Young's inequality, we readily derive
\begin{equation}\label{GS-7}
\left(2+A  \xi_2\right)\chi\int_\Omega |\Delta w||\Delta v|\leq \xi_1\int_\Omega |\Delta w|^2+\frac{(2+A\xi_2)^2\chi^2}{4\xi_1}\int_\Omega|\Delta v|^2,
\end{equation}
and
\begin{equation}\label{GS-8}
B\chi \xi_2\int_\Omega |\nabla v||\Delta v|\leq (1-\varepsilon_1)\chi \int_\Omega|\nabla v|^2+\frac{B^2\chi\xi_2^2}{4(1-\varepsilon_1)}\int_\Omega |\Delta v|^2,
\end{equation}
where, due to $d\geq d_0(\chi)$ with $d_0(\chi)$ defined by \eqref{d*-def}, $\varepsilon_1\in [ 0, 1)$ is defined by
\begin{equation}\label{dv}
\varepsilon_1=\begin{cases}
0, &\mathrm{if}\ \ \ d=d_0(\chi),\\
\frac{1}{2}\left(d- \frac{\left(2+A\xi_2\right)^2\chi}{4\xi_1}- \frac{B^2\xi_2^2}{4}\right)\left(d- \frac{\left(2+A\xi_2\right)^2\chi}{4\xi_1}\right)^{-1}, &\mathrm{if}\ \ \ d>d_0(\chi).\\
\end{cases}
\end{equation}
A substitution of \eqref{GS-7} and \eqref{GS-8} into \eqref{GS-6} gives
\begin{equation}\label{GS-9}
\begin{split}
&\frac{d}{dt}\left(\int_\Omega u\ln \frac{u}{\bar{u}}+\frac{\chi}{2}\int_\Omega |\nabla v|^2\right)+\int_\Omega \frac{|\nabla u|^2}{u}+\varepsilon_1\chi\int_\Omega|\nabla v|^2\\[0.25cm]
&+\left(d-\frac{(2+A\xi_2 )^2\chi}{4\xi_1}-\frac{B^2\xi_2^2}{4(1-\varepsilon_1)}\right)\chi\int_\Omega |\Delta v|^2\leq 0.
\end{split}
\end{equation}
Since  $d\geq d_0(\chi)$, using the definitions of $d_0$ and  $\varepsilon_1$ in \eqref{d*-def} and \eqref{dv}, one has
\begin{equation*}
\left(d-\frac{(2+A\xi_2 )^2\chi}{4\xi_1}-\frac{B^2\xi_2^2}{4(1-\varepsilon_1)}\right)\chi\geq 0,
\end{equation*}
which, substituted into \eqref{GS-9}, yields
\begin{equation}\label{GS-9**}
\frac{d}{dt}\left(\int_\Omega u\ln \frac{u}{\bar{u}}+\frac{\chi}{2}\int_\Omega |\nabla v|^2\right)+\int_\Omega \frac{|\nabla u|^2}{u}+\varepsilon_1\chi\int_\Omega|\nabla v|^2\leq 0.
\end{equation}
Then an integration  with respect to $t$  over $(1,t)$ shows that
\begin{equation}\label{GS-10}
\int_\Omega u\ln \frac{u}{\bar{u}}+\frac{\chi}{2}\int_\Omega |\nabla v|^2+\int_1^t\int_\Omega \frac{|\nabla u|^2}{u}\leq \int_\Omega u(\cdot, 1)\ln \frac{u(\cdot, 1)}{\bar{u}_0}+\frac{\chi}{2}\int_\Omega |\nabla v(\cdot,1)|^2.
\end{equation}
A use of  Lemma \ref{II} along with the fact $\bar{u}=\bar{u}_0$ entails
\begin{equation}\label{GS-11*}
\int_\Omega u\ln \frac{u}{\bar{u}}\geq \frac{1}{2\bar{u}_0}\|u-\bar{u}_0\|_{L^1}^2\geq 0,
\end{equation}
and
\begin{equation}\label{GS-12}
\int_\Omega u(\cdot, 1)\ln \frac{u(\cdot, 1)}{\bar{u}_0}\leq \frac{1}{\bar{u}_0}\|u(\cdot,1)-\bar{u}_0\|_{L^2}^2.
\end{equation}
Substituting \eqref{GS-11*} and \eqref{GS-12} into \eqref{GS-10}, we obtain \eqref{GS-5} directly.

On the other hand, using Lemma \ref{II} again and noting the fact of $\|u\|_{L^\infty}\leq M$, we use the Poincar\'{e} inequality to derive that
\begin{equation*}\label{GS-11}
\int_\Omega u\ln \frac{u}{\bar{u}}\leq \frac{1}{\bar{u}}\|u-\bar{u}\|_{L^2}^2\leq \frac{c_{68}}{\bar{u}_0}\|\nabla u\|_{L^2}^2\leq \frac{c_{68}M}{\bar{u}_0}\int_\Omega \frac{|\nabla u|^2}{u},
\end{equation*}
which yields
\begin{equation}\label{GS-13}
\frac{\bar{u}_0}{c_{68}M}\int_\Omega u\ln \frac{u}{\bar{u}}\leq \int_\Omega \frac{|\nabla u|^2}{u}.
\end{equation}
Substituting \eqref{GS-13} into \eqref{GS-9**}, and recalling  the fact $\varepsilon_1>0$ due to $d>d_0(\chi)$, then  we find a positive constant $c_{69}:=\min\{\frac{\bar{u}_0}{c_{68}M },2\varepsilon_1\}$ such that
\begin{equation*}\label{GS-9*}
\begin{split}
&\frac{d}{dt}\left(\int_\Omega u\ln \frac{u}{\bar{u}}+\frac{\chi}{2}\int_\Omega |\nabla v|^2\right)+c_{69}\left(\int_\Omega u\ln \frac{u}{\bar{u}}+\frac{\chi}{2}\int_\Omega |\nabla v|^2\right)\leq 0,
\end{split}
\end{equation*}
which immediately entails, for $t\geq 1$,
\begin{equation}\label{GS-14}
\int_\Omega u\ln \frac{u}{\bar{u}}+\frac{\chi}{2}\int_\Omega |\nabla v|^2\leq \left(\int_\Omega u(\cdot,1)\ln \frac{u(\cdot,1)}{\bar{u}_0}+\frac{\chi}{2}\int_\Omega |\nabla v(\cdot,1)|^2\right)e^{-c_{69}(t-1)}.
\end{equation}
Then the combination of \eqref{GS-11*}, \eqref{GS-12} and \eqref{GS-14} implies \eqref{GS-5*}.
\end{proof}

\begin{lemma}\label{K*}
Under Lemma \ref{GS-4}, the $u,w$-components of the global bounded  classical  solution   of \eqref{1-1} fulfills the following properties:
 \begin{itemize}
 \item[(uc1)] If $d\geq d_0(\chi)$, then $(u, w)$ decays to $(\bar{u}_0,0)$ uniformly:
\begin{equation}\label{K1}
\|u(\cdot,t)-\bar{u}_0\|_{L^\infty}+\|w(\cdot,t)\|_{W^{2,\infty}}\to 0 \ \ \mathrm{as}\ \  t\to\infty.
\end{equation}
\item[(uc2)] If $d>d_0(\chi)$, then $(u,w)$ decays exponentially to $(\bar{u}_0, 0)$: for some $\zeta_2>0$,
\begin{equation}\label{K1*}
\|u(\cdot,t)-\bar{u}_0\|_{L^\infty}+\|w(\cdot,t)\|_{W^{2,\infty}}\leq C_{11}e^{-\zeta_2 t},  \ \ \ \ \forall t>0.
\end{equation}
\end{itemize}
\end{lemma}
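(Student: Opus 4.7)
My plan is to bootstrap the $L^1$ and weighted-Dirichlet estimates of Lemma \ref{GS-4} up to the targets of (uc1)--(uc2) by combining Gagliardo--Nirenberg interpolation with the uniform $W^{1,2n}$ bound of Lemma \ref{up}, the parabolic H\"older regularity of Lemma \ref{hg}, and standard elliptic regularity for $w$. First I would establish the $L^1$ decay of $u-\bar{u}_0$. Under (uc2) this is immediate from \eqref{GS-5*}. In the borderline case $d=d_0(\chi)$ of (uc1), the key observation is that Lemma \ref{II} combined with the Poincar\'e--Wirtinger inequality and the uniform bound $\|u\|_{L^\infty}\le M$ from Theorem \ref{GB} yields
\[
\int_\Omega u\ln\frac{u}{\bar{u}_0} \;\le\; \frac{1}{\bar{u}_0}\|u-\bar{u}_0\|_{L^2}^2 \;\le\; \frac{cM}{\bar{u}_0}\int_\Omega \frac{|\nabla u|^2}{u}.
\]
Integrating in time and invoking \eqref{GS-5} shows that the nonnegative map $f(t):=\int_\Omega u(\cdot,t)\ln(u(\cdot,t)/\bar{u}_0)$ is integrable on $(1,\infty)$, while Lemma \ref{hg} together with the continuity of $z\mapsto z\ln(z/\bar{u}_0)$ on $[0,M]$ makes $f$ uniformly continuous in $t$. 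Barbalat's lemma then forces $f(t)\to 0$, and another application of Lemma \ref{II} upgrades this to $\|u-\bar{u}_0\|_{L^1}\to 0$.

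Next I would upgrade $L^1$ decay to $L^\infty$ decay via Gagliardo--Nirenberg. Since $u-\bar{u}_0$ has zero mean, the interpolation inequality with exponent $\alpha=2n/(2n+1)$ gives
\[
\|u-\bar{u}_0\|_{L^\infty} \;\le\; c\bigl(\|\nabla u\|_{L^{2n}}+\|u-\bar{u}_0\|_{L^1}\bigr)^{\alpha}\|u-\bar{u}_0\|_{L^1}^{1-\alpha} \;\le\; c(C_8+2M)^{\alpha}\|u-\bar{u}_0\|_{L^1}^{1/(2n+1)},
\]
where the uniform $W^{1,2n}$-bound on $u$ comes from Lemma \ref{up} and Theorem \ref{GB}. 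Plugging in Step~1 produces \eqref{K1} in (uc1) and the exponential estimate $\|u-\bar{u}_0\|_{L^\infty}\le C e^{-\zeta_1 t/(2n+1)}$ in (uc2), with $\zeta_2:=\zeta_1/(2n+1)$.

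Finally, the $W^{2,\infty}$ decay of $w$ is inherited from Step 2 by elliptic regularity. Since $-\Delta w=u-\bar{u}_0$ with homogeneous Neumann data and $\int_\Omega w=0$, Schauder theory yields $\|w\|_{C^{2,\beta}}\le c\|u-\bar{u}_0\|_{C^\beta}$ for any $\beta\in(0,\sigma)$, and H\"older interpolation together with Lemma \ref{hg} gives
\[
\|u-\bar{u}_0\|_{C^\beta} \;\le\; c\|u-\bar{u}_0\|_{C^\sigma}^{\beta/\sigma}\|u-\bar{u}_0\|_{L^\infty}^{1-\beta/\sigma} \;\le\; cC_7^{\beta/\sigma}\|u-\bar{u}_0\|_{L^\infty}^{1-\beta/\sigma},
\]
so the decay of $\|u-\bar{u}_0\|_{L^\infty}$ transfers to $\|w\|_{W^{2,\infty}}\le c\|w\|_{C^{2,\beta}}$ at the same rate (exponential in (uc2), merely qualitative in (uc1)). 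The main obstacle will be Step 1 in the borderline case $d=d_0(\chi)$: convergence without any rate must be extracted from bare integrability of the dissipation $\int_1^\infty\int_\Omega|\nabla u|^2/u$, and this is precisely where the uniform H\"older continuity from Lemma \ref{hg} becomes essential so that Barbalat's lemma applies. Once Step 1 is in hand, the interpolation and Schauder arguments in Steps 2--3 are routine.
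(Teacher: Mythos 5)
Your proposal is correct and follows essentially the same route as the paper: a Barbalat-type argument drawn from the space-time integrability of the dissipation in \eqref{GS-5} combined with the uniform H\"older continuity of Lemma \ref{hg} (the paper runs it on $\|u-\bar{u}_0\|_{L^2}^2$ via the Poincar\'e inequality rather than on the entropy $\int_\Omega u\ln(u/\bar{u}_0)$, but this is the same idea), followed by Gagliardo--Nirenberg interpolation against the uniform $\|\nabla u\|_{L^{2n}}$ bound of Lemma \ref{up}, and finally elliptic regularity for $w$. If anything, your Schauder-plus-H\"older-interpolation treatment of the $w$-step is more careful than the paper's direct appeal to a $W^{2,\infty}$ bound in terms of $\|u-\bar{u}_0\|_{L^\infty}$.
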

\begin{proof}
 Notice from  Theorem \ref{GB}  that $\|u(\cdot,t)\|_{L^\infty}$ is uniformly bounded, and thus the space-time estimate  \eqref{GS-5} along with  Poincar\'{e} inequality ensures
\begin{equation}\label{K4}
\int_1^\infty \int_\Omega |u-\bar{u}_0|^2\leq c_{69}\int_1^\infty\int_\Omega |\nabla u|^2\leq c_{70}.
\end{equation}
Then the uniform continuity of $\|u-\bar{u}_0\|_{L^2}^2$ implied by \eqref{hg-e1} shows that
\begin{equation}\label{l2-conv}
\|u-\bar{u}_0\|_{L^2}\rightarrow 0 \text{ as } t\rightarrow \infty.
\end{equation}
 Thus, with the boundedness of $\|\nabla u\|_{L^{2n}}$ in \eqref{up-1},  the Gagliardo-Nirenberg inequality gives
\begin{equation}\label{u-conver}
    \begin{split}
        \|u(\cdot,t)-\bar{u}_0\|_{L^\infty}\leq& c_{71}\left(\|\nabla u(\cdot,t)\|_{L^{2n}}^{\frac{n}{n+1}}\|u(\cdot,t)-\bar{u}_0\|_{L^2}^{\frac{1}{n+1}}
        +\|u(\cdot,t)-\bar{u}_0\|_{L^2}\right)\\
        \leq & c_{72} \left(\|u(\cdot,t)-\bar{u}_0\|_{L^2}^{\frac{1}{n+1}}
        +\|u(\cdot,t)-\bar{u}_0\|_{L^2}\right) \rightarrow 0  \text{ as } t\rightarrow \infty.
    \end{split}
    \end{equation}
This establishes the $u$-convergence in \eqref{K1}. We here also provide an alterative short proof based on   \cite[Lemma 3.10]{Tao-Winkler-SIMA-2015}. Indeed, assume to the contrary, then there would exist some sequences $(x_j)_{j\in\mathbb{N}}\subset\Omega$  and $(t_j)_{j\in\mathbb{N}}\subset(0,\infty)$  satisfying $t_j\to\infty$ as $j\to\infty$ such that
\begin{equation*}
|u(x_j,t_j)-\bar{u}_0|\geq c_{73}, \ \ \ \forall j\in\mathbb{N}.
\end{equation*}
The uniform continuity of $u$ due to Lemma \ref{hg} warrants there exist $r>0$ and $\delta>0$ such  that,  for any $j\in\mathbb{N}$,
\begin{equation}\label{K6}
|u-\bar{u}_0|\geq \frac{c_{73}}{2} \text{ on }  B_r(x_j)\cap \Omega \times (t_j,t_j+\delta).
\end{equation}
The smoothness of $\partial\Omega$ shows  that
\begin{equation}\label{K7}
|B_r(x_j)\cap\Omega|\geq c_{74}, \ \ \forall  x_j\in \Omega.
\end{equation}
Therefore, for all $j\in \mathbb{N}$, it follows from  \eqref{K6} and \eqref{K7} that
\begin{equation}\label{K8}
\int_{t_j}^{t_j+\delta}\int_\Omega |u-\bar{u}_0|^2  \geq \int_{t_j}^{t_j+\delta}\int_{B_r(x_j)\cap\Omega} |u-\bar{u}_0|^2\geq \frac{c_{73}^2c_{74}\delta}{4},
\end{equation}
which clearly  contradicts the following fact due to \eqref{K4}:
\begin{equation*}
\int_{t_j}^{t_j+\delta}\int_\Omega |u-\bar{u}_0|^2\leq \int_{t_j}^\infty\int_\Omega |u-\bar{u}_0|^2\to  0 \  \mathrm{ as }\ \ j\to\infty.
\end{equation*}
This contradiction gives rise to \eqref{K1}.

In the case of $d>d_0(\chi)$, we shall apply Lemma \ref{up} and \eqref{GS-5*} to show the $L^\infty$- exponential decay \eqref{K1*}. Indeed, employing  the Gagliardo-Nirenberg inequality, the $L^1$-exponential decay \eqref{GS-5*} and the boundedness of $\|\nabla u\|_{L^{2n}}$ in Lemma \ref{up}, we conclude that
\begin{equation*}
    \begin{split}
        \|u(\cdot,t)-\bar{u}_0\|_{L^\infty}\leq& c_{75}\left(\|\nabla u(\cdot,t)\|_{L^{2n}}^{\frac{2n}{2n+1}}\|u(\cdot,t)-\bar{u}_0\|_{L^1}^{\frac{1}{2n+1}}
        +\|u(\cdot,t)-\bar{u}_0\|_{L^1}\right)\\
        \leq & c_{76}e^{-\frac{\zeta_1}{2n+1}t},
    \end{split}
\end{equation*}
which shows  our $u$-exponential decay estimate  \eqref{K1}.

Finally,  we apply the $W^{2,\infty}$-estimate to \eqref{w-2p} obtain
$$
\|w(\cdot, t)\|_{W^{2,\infty}}\leq c_{77}\|u(\cdot, t)-\bar{u}_0\|_{L^\infty},
$$
hence, the assertions for the convergence for $w$ follow directly the convergence of $u$.
\end{proof}
\begin{lemma}\label{vc}
 Under Lemma \ref{GS-4}, the $v$-component of the global bounded  classical  solution   of \eqref{1-1} enjoys  the following convergence properties:
 \begin{itemize}
 \item[(vc1)] If $d\geq d_0(\chi)$, then $v$ decays to $\bar{u}_0$ uniformly:
\begin{equation}\label{vc-1}
\|v(\cdot,t)-\bar{u}_0\|_{L^\infty}\to 0 \ \ \mathrm{as}\ \  t\to\infty.
\end{equation}
\item[(vc2)]  If $d>d_0(\chi)$, then $v$ decays exponentially to $\bar{u}_0$ uniformly: for some $\zeta_3>0$,
\begin{equation}\label{vc2}
\|v(\cdot,t)-\bar{u}_0\|_{L^\infty}\leq C_{12}e^{-\zeta_3 t},  \ \ \ \ \forall t>0.
\end{equation}
\end{itemize}
\end{lemma}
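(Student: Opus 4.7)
\textbf{Proof proposal for Lemma \ref{vc}.} The strategy is to transport the $L^\infty$--convergence of $u$ just established in Lemma \ref{K*} over to the $v$--component by means of a weighted energy estimate for $v-\bar{u}_0$, using the qualitative bounds in Theorem \ref{GB} and standard elliptic regularity for the $w$--equation.

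\emph{Step 1: Energy identity for $v-\bar u_0$.} Since $\int_\Omega \Delta v = 0$ and $\bar u_0$ is constant, testing the $v$--equation in \eqref{1-1} against $v-\bar u_0$ and integrating by parts yields
\begin{equation*}
\frac{1}{2}\frac{d}{dt}\int_\Omega (v-\bar u_0)^2 + d\int_\Omega|\nabla v|^2 + \int_\Omega (v-\bar u_0)^2 = \int_\Omega (u-\bar u_0)(v-\bar u_0) - \xi_2\int_\Omega v\,\nabla w\cdot\nabla v.
\end{equation*}
The boundedness $\|v\|_{L^\infty}\leq M$ from Theorem \ref{GB} lets me absorb the convection term via Young's inequality into $\tfrac{d}{2}\|\nabla v\|_{L^2}^2 + \tfrac{M^2\xi_2^2}{2d}\|\nabla w\|_{L^2}^2$, while elliptic testing applied to $-\Delta w = u-\bar u_0$ (combined with $\int_\Omega w=0$ and Poincar\'e) gives $\|\nabla w\|_{L^2}\leq C_p\|u-\bar u_0\|_{L^2}$. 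Applying Young to the last product then collapses everything to a scalar ODI of the form
\begin{equation*}
\frac{d}{dt}y(t)+y(t)\leq C_\ast\int_\Omega(u-\bar u_0)^2,\qquad y(t):=\int_\Omega (v-\bar u_0)^2,
\end{equation*}
for a constant $C_\ast>0$ depending on $M,\xi_2,d$.

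\emph{Step 2: $L^2$--convergence of $v-\bar u_0$.} Grönwall gives $y(t)\leq e^{-t}y(0) + C_\ast\int_0^t e^{-(t-s)}\|u(\cdot,s)-\bar u_0\|_{L^2}^2\,ds$. For (vc1), $d\geq d_0(\chi)$ combined with Lemma \ref{K*}(uc1) says $\|u(\cdot,s)-\bar u_0\|_{L^\infty}\to 0$; splitting the convolution integral at a large $T$ and using the uniform boundedness of $u$ on $[0,T]$ produces $y(t)\to 0$. For (vc2), $d>d_0(\chi)$ combined with Lemma \ref{K*}(uc2) gives $\|u-\bar u_0\|_{L^2}^2\leq Ce^{-2\zeta_2 t}$, so the convolution is bounded by a constant multiple of $e^{-\min(1,2\zeta_2)t}$, whence $y(t)\leq C e^{-\alpha t}$ with $\alpha:=\min(1,2\zeta_2)$.

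\emph{Step 3: Upgrading from $L^2$ to $L^\infty$.} Theorem \ref{GB} supplies $\|\nabla v(\cdot,t)\|_{L^\infty}\leq M$ uniformly in $t>0$, so the Gagliardo--Nirenberg interpolation
\begin{equation*}
\|v-\bar u_0\|_{L^\infty}\leq C\bigl(\|\nabla v\|_{L^\infty}^{n/(n+2)}\|v-\bar u_0\|_{L^2}^{2/(n+2)}+\|v-\bar u_0\|_{L^2}\bigr)
\end{equation*}
immediately lifts the $L^2$--convergence to the uniform convergence \eqref{vc-1} in case (vc1), and the $L^2$--exponential bound $y(t)\leq Ce^{-\alpha t}$ to the exponential uniform bound \eqref{vc2} with $\zeta_3=\tfrac{\alpha}{n+2}$ in case (vc2).

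\emph{Main obstacle.} The delicate point is to control the convection term $-\xi_2\int_\Omega v\,\nabla w\cdot\nabla v$ without spoiling the decay rate. This works only because $v$ is a priori uniformly bounded (so that the coupling to $\nabla v$ can be absorbed by the dissipation $d\int_\Omega|\nabla v|^2$) and because $\|\nabla w\|_{L^2}$ is directly controlled by $\|u-\bar u_0\|_{L^2}$ through the elliptic $w$--equation; this cascade ($u\to w\to v$) is what transfers the convergence of $u$ all the way to $v$. A subsidiary technical point is that the exponent one loses in Step 3 via Gagliardo--Nirenberg is only a factor $2/(n+2)$, so the exponential rate survives, yielding a valid $\zeta_3>0$.
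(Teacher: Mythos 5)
Your proof is correct, and it reaches the same endpoints as the paper (an ODI for $\|v-\bar u_0\|_{L^2}^2$ forced by $\|u-\bar u_0\|_{L^2}^2$, followed by a Gagliardo--Nirenberg upgrade), but the details differ in two places worth noting. First, for the convection term the paper keeps the structure $\xi_2\nabla\cdot((v-\bar u_0)\nabla w)$, integrates by parts once more to convert it into the zero-order term $\tfrac{\xi_2}{2}\int_\Omega(v-\bar u_0)^2(\bar u_0-u)$ via $\Delta w=\bar u_0-u$, and then must invoke the already-proved uniform convergence of $u$ to ensure $\|u-\bar u_0\|_{L^\infty}\le \tfrac{1}{2\xi_2}$ for $t\ge t_1$, so its ODI is only valid after a waiting time $t_1$. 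You instead estimate $-\xi_2\int_\Omega v\,\nabla w\cdot\nabla v$ by Young's inequality, absorbing half into the dissipation $d\int_\Omega|\nabla v|^2$ and bounding $\|\nabla w\|_{L^2}\le C_p\|u-\bar u_0\|_{L^2}$ by the Poincar\'e argument for the elliptic $w$-equation; this is exactly the device the paper itself uses later in the $\mu>0$ case (Lemma \ref{lg-w} and \eqref{l2v-dev-p}), it is valid for all $t>0$, and it avoids the waiting time. Second, for the exponential rate in (vc2) the paper goes through $\bar v-\bar u_0=(\bar v_0-\bar u_0)e^{-t}$ plus the Poincar\'e inequality and the exponential decay of $\|\nabla v\|_{L^2}$ already contained in \eqref{GS-5*}, whereas you feed the exponential decay of $\|u-\bar u_0\|$ from Lemma \ref{K*}(uc2) through the Duhamel convolution; both work, though in your route you should note the harmless resonance case $2\zeta_2=1$ (replace the rate by any $\alpha<\min(1,2\zeta_2)$). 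Your Gagliardo--Nirenberg step uses $\|\nabla v\|_{L^\infty}\le M$ with exponent $2/(n+2)$, matching \eqref{v-conv-fin}, while the paper's (vc1) step interpolates against $\|\nabla v\|_{L^{2n}}$; both are supplied by Theorem \ref{GB}. Overall your argument is slightly more uniform across the two cases and is sound.
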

\begin{proof}
We first  rewrite the $v$-equation in \eqref{1-1} as
\begin{equation}\label{vc-2}
(v-\bar{u}_0)_t=d\Delta (v-\bar{u}_0)+\xi_2\nabla\cdot((v-\bar{u}_0)\nabla w)+\xi_2 \bar{u}_0\Delta w+u-\bar{u}_0-(v-\bar{u}_0).
\end{equation}
Multiplying \eqref{vc-2} by  $v-\bar{u}_0$, integrating over $\Omega$  by parts and using the fact $\Delta w=\bar{u}_0-u$  by  \eqref{1-1}, we compute that
\begin{equation}\label{vc-3}
\begin{split}
&\frac{1}{2}\frac{d}{dt}\int_\Omega (v-\bar{u}_0)^2+d\int_\Omega |\nabla v|^2+\int_\Omega (v-\bar{u}_0)^2\\
&=\frac{\xi_2}{2}\int_\Omega(v-\bar{u}_0)^2\Delta w+\xi_2\bar{u}_0\int_\Omega (v-\bar{u}_0)\Delta w+\int_\Omega(u-\bar{u}_0)(v-\bar{u}_0)\\
&=\frac{\xi_2}{2}\int_\Omega(v-\bar{u}_0)^2(\bar{u}_0-u)+\xi_2\bar{u}_0\int_\Omega (v-\bar{u}_0)(\bar{u}_0-u)+\int_\Omega(u-\bar{u}_0)(v-\bar{u}_0).
\end{split}
\end{equation}
The  fact  $\|u-\bar{u}_0\|_{L^\infty}\to 0$ as $t\to\infty$ by \eqref{K1} shows   there exists $t_1>0$ such that
\begin{equation}\label{vc-4}
 \|u(\cdot, t)-\bar{u}_0\|_{L^\infty} \leq \frac{1}{2\xi_2}, \quad \quad  t\geq t_1.
\end{equation}
On the other hand, using Young's inequality, we derive
\begin{equation}\label{vc-5}
\xi_2\bar{u}_0\int_\Omega (v-\bar{u}_0)(\bar{u}_0-u)\leq \frac{1}{8}\int_\Omega (v-\bar{u}_0)^2+2\xi_2^2\bar{u}_0^2 \int_\Omega(u-\bar{u}_0)^2,
\end{equation}
and
\begin{equation}\label{vc-6}
\int_\Omega(u-\bar{u}_0)(v-\bar{u}_0)\leq \frac{1}{8}\int_\Omega (v-\bar{u}_0)^2+2\int_\Omega(u-\bar{u}_0)^2.
\end{equation}
Combining  \eqref{vc-4},\eqref{vc-5},  \eqref{vc-6} and  \eqref{vc-3}, we conclude,  for all $t\geq t_1$,  that
$$
\frac{d}{dt}\int_\Omega (v-\bar{u}_0)^2+2d\int_\Omega |\nabla v|^2+\int_\Omega (v-\bar{u}_0)^2
\leq 4(1+\xi_2^2\bar{u}_0^2)\int_\Omega(u-\bar{u}_0)^2,
$$
which, upon being solved and  the fact \eqref{l2-conv},  implies
\begin{equation}\label{vc-8}
\begin{split}
\|v(\cdot,t)-\bar{u}_0\|_{L^2}^2&\leq \|v(\cdot,1)-\bar{u}_0\|_{L^2}^2e^{-(t-1)}+4(1+\xi_2^2\bar{u}_0^2)
\int_1^te^{-(t-s)}\int_\Omega(u(\cdot, s)-\bar{u}_0)^2\\
&\rightarrow 0 \ \ \text{ as } t\rightarrow \infty.
\end{split}
\end{equation}
Then, with the $L^2$-convergence \eqref{vc-8} at hand, replacing $v$ with $u$ in the Gagliardo-Nirenberg inequality  \eqref{u-conver}, we readily obtain \eqref{vc-1}.

In the case of $d>d_0(\chi)$, to show  $v$ decays exponentially, we first see from \eqref{1-1}  that
\begin{equation}\label{vbar-con}
\bar{v}-\bar{u}_0=\left(\bar{v}_0-\bar{u}_0\right)e^{-t}, \ \ \ \forall t>0.
\end{equation}
Now, we use  Poincar\'{e} inequality  and the exponential decays  \eqref{GS-5*} and \eqref{vbar-con} to deduce
\begin{equation*}
\begin{split}
\|v(\cdot,t)-\bar{u}_0\|_{L^2}&\leq \|v-\bar{v}\|_{L^2}+\|\bar{v}-\bar{u}_0\|_{L^2}\\
&\leq c_{78}\|\nabla v\|_{L^2}+|\bar{v}_0-\bar{u}_0||\Omega|e^{-t}\\
&\leq c_{79}e^{-\min\{\zeta_1, \ 1\}t}, \ \ \forall t>0.
\end{split}
\end{equation*}
Now, since $\|\nabla v\|_{L^{2n}}$ is uniformly-in-time bounded by Theorem \ref{GB}, replacing $v$ with $u$ in the Gagliardo-Nirenberg inequality  \eqref{u-conver}, we readily improve this $L^2$-exponential decay to $L^\infty$-exponential decay of $v$ as in  \eqref{vc2}.
\end{proof}

\subsection{Case 2:  $a, \mu>0$}
In this subsection, we shall study the large time behavior of solution for the complete system \eqref{system}  with  $a, \mu>0$.  For convenience,  we rewrite it here:
\begin{equation}\label{1-2}
    \begin{cases}
    u_t=\Delta u-\chi\nabla\cdot(u\nabla v)+\xi_1\nabla\cdot(u\nabla w)+u(a-\mu u^\theta), &x\in\Omega, t>0,\\
    v_t=d\Delta v+\xi_2\nabla\cdot(v\nabla w)+u-v, &x\in\Omega, t>0,\\
    0=\Delta w+u-\bar{u}, \  \   \int_\Omega w=0, & x\in\Omega, t>0,\\
    \frac{\partial u}{\partial\nu}=\frac{\partial v}{\partial\nu}=\frac{\partial w}{\partial\nu}=0, &x\in\partial\Omega, t>0,\\
    u(x,0)=u_0(x),~v(x,0)=v_0(x),&x\in\Omega.
    \end{cases}
\end{equation}
\begin{lemma}\label{lg-w}
Let $C_p$ denote the Poincar\'{e} constant defined by \eqref{poincare-cont} and $(u,v,w)$ be the solution of the system \eqref{1-2}. Then
\begin{equation}\label{lg-w1}
  \|\nabla w\|_{L^2}\leq C_p\|u-b\|_{L^2}, \ \  \|\Delta w\|_{L^2}\leq  \|u-b\|_{L^2}, \ \   \| w\|_{W^{2,2}}\leq C_{13}\|u-b\|_{L^2}, \ \ b=(\frac{a}{\mu})^\frac{1}{\theta}.
\end{equation}
\end{lemma}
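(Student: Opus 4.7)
The plan is to exploit the elliptic identity $-\Delta w=u-\bar u$ (together with $\int_\Omega w=0$ and the homogeneous Neumann condition) and reduce every bound to the single fact that $\bar u$ is the $L^2$-best constant approximation of $u$, so in particular it beats the constant $b=(a/\mu)^{1/\theta}$.

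First I would handle the middle inequality, which is the simplest: from the third equation of \eqref{1-2} one has $\|\Delta w\|_{L^2}=\|u-\bar u\|_{L^2}$, and the orthogonal decomposition
\[
\|u-b\|_{L^2}^{2}=\|u-\bar u\|_{L^2}^{2}+|\Omega|(\bar u-b)^{2}\geq \|u-\bar u\|_{L^2}^{2}
\]
gives $\|\Delta w\|_{L^2}\leq \|u-b\|_{L^2}$ at once.

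Next, for the gradient bound, I would test the equation $-\Delta w=u-\bar u$ against $w$, integrate by parts (the Neumann condition kills the boundary term), and obtain
\[
\int_\Omega |\nabla w|^{2}=\int_\Omega (u-\bar u)\,w\leq \|u-\bar u\|_{L^2}\|w\|_{L^2}.
\]
Since $\int_\Omega w=0$, the Poincar\'e inequality \eqref{poincare-cont} yields $\|w\|_{L^2}\leq C_p\|\nabla w\|_{L^2}$, so after dividing by $\|\nabla w\|_{L^2}$ we get $\|\nabla w\|_{L^2}\leq C_p\|u-\bar u\|_{L^2}\leq C_p\|u-b\|_{L^2}$.

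Finally, for the $W^{2,2}$ bound, I would invoke the standard $W^{2,2}$ elliptic estimate for the Neumann problem (an application of \cite{ADN59,ADN64}, already used earlier in the paper around \eqref{int-coro-3}), namely $\|D^{2} w\|_{L^{2}}\leq c\|\Delta w\|_{L^{2}}$ once $\int_\Omega w=0$ is taken into account. Combining with the two previous bounds and the Poincar\'e inequality,
\[
\|w\|_{W^{2,2}}^{2}=\|w\|_{L^{2}}^{2}+\|\nabla w\|_{L^{2}}^{2}+\|D^{2}w\|_{L^{2}}^{2}
\leq \bigl(C_p^{2}(1+C_p^{2})+c^{2}\bigr)\|u-b\|_{L^{2}}^{2},
\]
which gives the claimed bound with $C_{13}=\bigl(C_p^{2}(1+C_p^{2})+c^{2}\bigr)^{1/2}$. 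There is no real obstacle here; the only conceptual point is the replacement of $\bar u$ by $b$, which is free because the mean is the $L^{2}$-closest constant, and the rest is routine testing plus Poincar\'e and standard elliptic regularity.
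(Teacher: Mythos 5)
Your proof is correct and follows essentially the same route as the paper: test $-\Delta w=u-\bar u$ against $w$ and against $\Delta w$, apply the Poincar\'e inequality from \eqref{poincare-cont}, and invoke the Neumann $W^{2,2}$ elliptic estimate. The only cosmetic difference is that you pass through the best-constant-approximation inequality $\|u-\bar u\|_{L^2}\le\|u-b\|_{L^2}$, whereas the paper adds and subtracts $b$ and uses $\int_\Omega w=0$ and $\int_\Omega\Delta w=0$ to discard the extra terms; the two bookkeeping devices are equivalent.
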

\begin{proof} By the $w$-equation in \eqref{1-1}  and the Poincar\'{e} inequality  due to $\int_\Omega  w=0$, we infer
\begin{equation*}
\int_\Omega |\nabla w|^2=\int_\Omega (u-b)w+(b-\bar{u})\int_\Omega w\leq \|u-b\|_{L^2}\|w\|_{L^2}\leq C_p\|u-b\|_{L^2}\|\nabla w\|_{L^2}.
\end{equation*}
which shows
\begin{equation}\label{wl2-con}
\|\nabla w\|_{L^2}\leq C_p\|u-b\|_{L^2}.
\end{equation}
Next, we deduce from the third equation in \eqref{1-1}  and the fact that $\int_\Omega \Delta w=0$ that
\begin{equation*}
\begin{split}
\int_\Omega |\Delta w|^2&=\int_\Omega (b-u)\Delta w+(\bar{u}-b)\int_\Omega \Delta w\\
&=-\int_\Omega (u-b)\Delta w\leq \left(\int_\Omega (u-b)^2\right)^\frac{1}{2}\left(\int_\Omega |\Delta w|^2\right)^\frac{1}{2},
\end{split}
\end{equation*}
which  along with the $W^{2,2}$-elliptic estimate applied to \eqref{w-2p} yields
\begin{equation*}
\|  w\|_{W^{2,2}}\leq c_{80}\|\Delta w\|_{L^2}\leq c_{80}\|u-b\|_{L^2}.
\end{equation*}
This together with \eqref{wl2-con} gives  rise to  \eqref{lg-w1}.
\end{proof}

\begin{lemma}
The classical  solution of  \eqref{1-2} satisfies
\begin{equation}\label{lg-u1}
 \frac{d}{dt}\int_\Omega (u-b-b\ln \frac{u}{b})+\mu\int_\Omega (u-b)(u^\theta-b^\theta)\leq  \frac{b\chi^2}{2}\int_\Omega |\nabla v|^2+\frac{b\xi_1^2C_p^2}{2}\int_\Omega (u-b)^2.
\end{equation}
\end{lemma}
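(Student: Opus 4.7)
The natural Lyapunov-type quantity $\int_\Omega (u - b - b\ln(u/b))$ suggests multiplying the $u$-equation of \eqref{1-2} by $(1 - b/u)$ and integrating over $\Omega$. Noting that $\partial_t(u - b - b\ln(u/b)) = (1 - b/u)u_t$, this gives
\[
\frac{d}{dt}\int_\Omega (u-b-b\ln \tfrac{u}{b}) = \int_\Omega (1-\tfrac{b}{u})\bigl[\Delta u - \chi\nabla\cdot(u\nabla v) + \xi_1\nabla\cdot(u\nabla w) + u(a-\mu u^\theta)\bigr].
\]
Integration by parts against $\nabla(1 - b/u) = (b/u^2)\nabla u$ converts the three flux terms into
\[
-b\int_\Omega \frac{|\nabla u|^2}{u^2} + b\chi\int_\Omega \frac{\nabla u\cdot\nabla v}{u} - b\xi_1\int_\Omega \frac{\nabla u\cdot\nabla w}{u}.
\]
For the reaction term, the key algebraic observation is that $b = (a/\mu)^{1/\theta}$ means $a = \mu b^\theta$, so
\[
\int_\Omega (1-\tfrac{b}{u})u(a-\mu u^\theta) = -\mu\int_\Omega (u-b)(u^\theta - b^\theta),
\]
which is the dissipative term we want on the left-hand side.

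Next I would estimate the two cross terms by Young's inequality, splitting the available $-b\int_\Omega |\nabla u|^2/u^2$ equally so that the gradient dissipation is completely absorbed:
\[
b\chi\int_\Omega \frac{\nabla u\cdot\nabla v}{u} \leq \frac{b}{2}\int_\Omega \frac{|\nabla u|^2}{u^2} + \frac{b\chi^2}{2}\int_\Omega |\nabla v|^2,
\]
\[
-b\xi_1\int_\Omega \frac{\nabla u\cdot\nabla w}{u} \leq \frac{b}{2}\int_\Omega \frac{|\nabla u|^2}{u^2} + \frac{b\xi_1^2}{2}\int_\Omega |\nabla w|^2.
\]
Combining the three $|\nabla u|^2/u^2$ contributions gives $(-b + b/2 + b/2) = 0$, so they cancel exactly. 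Finally, I would invoke the first inequality of Lemma \ref{lg-w}, namely $\|\nabla w\|_{L^2}^2 \leq C_p^2\|u-b\|_{L^2}^2$, to replace $\int_\Omega|\nabla w|^2$ by $C_p^2\int_\Omega(u-b)^2$. Putting the pieces together yields exactly \eqref{lg-u1}.

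I do not anticipate any serious obstacle: every step is a standard manipulation (integration by parts, Young's inequality, and the already-established elliptic Poincar\'e-type bound for $w$). The only point that requires mild attention is the choice of the Young constants, which must be exactly $\tfrac12$ on each side so that the $|\nabla u|^2/u^2$ dissipation is fully consumed and no residual term remains; any other split would leave either an unwanted $-c\int_\Omega|\nabla u|^2/u^2$ (harmless but not in the stated form) or fail to close the estimate. The identity $a = \mu b^\theta$, which links the equilibrium constant $b$ to the kinetic parameters, is what makes the reaction term produce the monotone quantity $(u-b)(u^\theta - b^\theta) \geq 0$ needed for subsequent convergence analysis.
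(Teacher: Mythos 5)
Your proposal is correct and follows essentially the same route as the paper: multiplying the $u$-equation by $1-\frac{b}{u}$, integrating by parts, using $a=\mu b^\theta$ to produce the dissipative term $-\mu\int_\Omega(u-b)(u^\theta-b^\theta)$, splitting the $b\int_\Omega|\nabla u|^2/u^2$ dissipation equally between the two Young estimates, and invoking $\|\nabla w\|_{L^2}\leq C_p\|u-b\|_{L^2}$ from Lemma \ref{lg-w}. No discrepancies.
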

\begin{proof}
We use integration by parts to deduce from   \eqref{1-2} that
\begin{equation}\label{lg-1}
\begin{split}
\frac{d}{dt}\int_\Omega (u-b-b\ln \frac{u}{b})&+b\int_\Omega \frac{|\nabla u|^2}{u^2}+\mu\int_\Omega (u-b)(u^\theta-b^\theta)\\
&= b\chi\int_\Omega \frac{\nabla u\cdot\nabla v}{u} - b\xi_1\int_\Omega \frac{\nabla u\cdot\nabla w}{u}.
\end{split}
\end{equation}
Using Young's inequality, one has
\begin{equation}\label{lg-2}
b\chi\int_\Omega \frac{\nabla u\cdot\nabla v}{u}\leq \frac{b}{2}\int_\Omega \frac{|\nabla u|^2}{u^2}+ \frac{b\chi^2}{2}\int_\Omega |\nabla v|^2,
\end{equation}
and, due to  \eqref{lg-w1} of Lemma \ref{lg-w},
\begin{equation}\label{lg-3}
-b\xi_1\int_\Omega \frac{\nabla u\cdot\nabla w}{u}\leq \frac{b}{2}\int_\Omega \frac{|\nabla u|^2}{u^2}+ \frac{b\xi_1^2}{2}\int_\Omega |\nabla w|^2\leq \frac{b}{2}\int_\Omega \frac{|\nabla u|^2}{u^2}+\frac{b\xi_1^2C_p^2}{2}\int_\Omega (u-b)^2.
\end{equation}
Then we substitute \eqref{lg-2} and \eqref{lg-3}  into \eqref{lg-1} to obtain \eqref{lg-u1}.
\end{proof}
\begin{lemma}
The global bounded  classical solution of \eqref{1-2} fulfils
\begin{equation}\label{l2v-dev-p}
\frac{d}{dt}\int_\Omega(v-b)^2+\int_\Omega(v-b)^2+d\int_\Omega|\nabla v|^2\leq \left(1+\frac{\xi_2^2M_0^2C_p^2}{d}\right)\int_\Omega (u-b)^2.
\end{equation}
\begin{proof} From  \eqref{1-2}, we use integration by parts,  the fact $\|v(\cdot,t)\|_{L^\infty}\leq M_0$ in \eqref{b-v1}, the H\"older inequality and  Young's inequality  to estimate
\begin{equation*}
\begin{split}
&\frac{d}{dt}\int_\Omega(v-b)^2+2\int_\Omega(v-b)^2+2d\int_\Omega|\nabla v|^2\\
&=-2\xi_2\int_\Omega v\nabla v\cdot\nabla w+2\int_\Omega (u-b)(v-b)\\
&\leq 2\xi_2M_0\|\nabla v\|_{L^2}\|\nabla w\|_{L^2} + \int_\Omega(u-b)^2+\int_\Omega(v-b)^2\\
& \leq d\int_\Omega|\nabla v|^2 + \frac{\xi_2^2M_0^2}{d}\int_\Omega|\nabla w|^2 + \int_\Omega(u-b)^2+\int_\Omega(v-b)^2,
\end{split}
\end{equation*}
which in conjunction with  \eqref{lg-w1} shows \eqref{l2v-dev-p}.
\end{proof}
\end{lemma}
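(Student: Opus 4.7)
The plan is to test the $v$-equation in \eqref{1-2} against $2(v-b)$ and integrate over $\Omega$. Since $b$ is a constant, $\nabla(v-b)=\nabla v$, $(v-b)_t = v_t$, and the reaction term $u-v$ can be split as $(u-b)-(v-b)$, yielding an absorption term $-2\int_\Omega(v-b)^2$ on the left. Integration by parts on the diffusion produces $2d\int_\Omega|\nabla v|^2$, and on the convection term produces $-2\xi_2\int_\Omega v\,\nabla v\cdot\nabla w$ (the boundary contributions vanish by the Neumann conditions). So after rearranging I expect the identity
\begin{equation*}
\frac{d}{dt}\int_\Omega(v-b)^2+2\int_\Omega(v-b)^2+2d\int_\Omega|\nabla v|^2 = -2\xi_2\int_\Omega v\,\nabla v\cdot\nabla w+2\int_\Omega(u-b)(v-b).
\end{equation*}

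Next I would estimate the two right-hand terms separately. For the convection term, use the uniform bound $\|v\|_{L^\infty}\le M_0$ from \eqref{b-v1} and Cauchy--Schwarz to get $|{-2\xi_2\int v\nabla v\cdot\nabla w}|\le 2\xi_2 M_0\|\nabla v\|_{L^2}\|\nabla w\|_{L^2}$, then apply Young's inequality with the weight $d$ so that the gradient part is absorbed into the dissipation: $\le d\|\nabla v\|_{L^2}^2+\tfrac{\xi_2^2M_0^2}{d}\|\nabla w\|_{L^2}^2$. For the cross term, the elementary Young inequality gives $2\int(u-b)(v-b)\le \int(u-b)^2+\int(v-b)^2$, which both absorbs one copy of $\|v-b\|_{L^2}^2$ into the left and leaves an $\|u-b\|_{L^2}^2$ term on the right.

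The last step is to convert the $\|\nabla w\|_{L^2}^2$ bound into an $\|u-b\|_{L^2}^2$ bound. This is precisely the content of Lemma \ref{lg-w}, which (using the mean-zero normalization of $w$ and the Poincar\'e constant $C_p$ defined in \eqref{poincare-cont}) yields $\|\nabla w\|_{L^2}\le C_p\|u-b\|_{L^2}$. Substituting and collecting the coefficient $\tfrac{\xi_2^2M_0^2C_p^2}{d}+1$ on $\|u-b\|_{L^2}^2$ then gives \eqref{l2v-dev-p}.

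There is no real obstacle here: all the ingredients (the $L^\infty$-bound on $v$, the Poincar\'e-type estimate for $\nabla w$ in terms of $u-b$, integration by parts with homogeneous Neumann data) are prepared earlier in the paper. The only delicate point is the choice of Young weight, which must be exactly $d$ so that the convection contribution cancels one copy of the dissipation $2d\|\nabla v\|_{L^2}^2$, leaving the clean $d\|\nabla v\|_{L^2}^2$ on the left-hand side of the target inequality.
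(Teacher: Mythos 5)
Your proposal is correct and follows exactly the paper's argument: the same weak formulation against $2(v-b)$, the same splitting of $u-v$, the same Cauchy--Schwarz/Young step with weight $d$ on the convection term, and the same appeal to Lemma \ref{lg-w} to convert $\|\nabla w\|_{L^2}$ into $C_p\|u-b\|_{L^2}$. Nothing to add.
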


We are now ready to show the exponential decay of global bounded solutions.
\begin{lemma}\label{cv-u}
Let $(u,v,w)$ be the global  solution  of \eqref{1-2}  obtained in Theorem \ref{GB}. If $\mu>0$ satisfies \eqref{mu-large-lt-intro}, then $(u,v, w)$ decays exponentially to $(b,b, 0)$: for some $\sigma>0$,
\begin{equation}\label{cv-uvw}
\|u(\cdot,t)-b\|_{L^\infty}+\|v(\cdot,t)-b\|_{L^\infty}+\|w(\cdot,t)\|_{W^{2,\infty}}\leq C_{14} e^{-\frac{\sigma}{2(n+1)} t}, \ \ \ t>0.
\end{equation}
\end{lemma}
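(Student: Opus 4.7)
My plan is to construct a coupled Lyapunov functional for $(u,v)$, derive exponential $L^2$-decay of $u-b$ and $v-b$, and then promote this to $L^\infty$ via Gagliardo--Nirenberg using the higher-order regularity from Lemma \ref{up} and Theorem \ref{GB}. Following the blueprint in the introduction, I would set
\[
F(t):=\int_\Omega\Bigl(u-b-b\ln\tfrac{u}{b}\Bigr)+\frac{b\chi^2}{2d}\int_\Omega(v-b)^2,\qquad b=(a/\mu)^{1/\theta}.
\]
The first summand is already controlled by \eqref{lg-u1}. Multiplying \eqref{l2v-dev-p} by $b\chi^2/(2d)$ and adding to \eqref{lg-u1}, the unpleasant term $\tfrac{b\chi^2}{2}\int|\nabla v|^2$ on the right of \eqref{lg-u1} is cancelled exactly by $\tfrac{b\chi^2}{2}\int|\nabla v|^2$ produced on the left of the rescaled $v$-inequality. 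This yields the master differential inequality
\[
F'(t)+\mu\int_\Omega(u-b)(u^\theta-b^\theta)+\frac{b\chi^2}{2d}\int_\Omega(v-b)^2\leq b\,A\int_\Omega(u-b)^2,
\]
where $A=A(d,\chi,\xi_1,\xi_2,M_0)$ is the combination of $\xi_1^2 C_p^2/2$, $\chi^2/(2d)$ and $\chi^2\xi_2^2 M_0^2 C_p^2/(2d^2)$ inherited from \eqref{lg-u1}, \eqref{l2v-dev-p} and \eqref{lg-w1}.

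Next I would exploit the qualitative bound $\|u\|_{L^\infty}\leq M$ from Theorem \ref{GB} to establish the pointwise coercivity
\[
(u-b)(u^\theta-b^\theta)\geq c(\theta,M,b)\,(u-b)^2,
\]
by splitting into $|u-b|\leq b/2$ (where the mean value theorem produces $c\sim \theta b^{\theta-1}$) and its complement (where the product is controlled directly using $0\leq u\leq M$). Using $a=\mu b^\theta$, the absorption condition $c\mu b^{\theta-1}>bA$ is equivalent to $\mu^{2/\theta}a^{(\theta-2)/\theta}\gtrsim A$, which is exactly the form encoded by $\Lambda(\cdot)$ in \eqref{lambda-def}. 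The dichotomy in \eqref{mu-large-lt-intro} -- taking the supremum of $M_0^2$ over $\mu>1$, and of $M_0^2\mu^{(4+n)/\theta}$ over $0<\mu\leq 1$ -- is precisely calibrated to \eqref{v-infty-bdd}: $M_0$ stays bounded as $\mu>1$, whereas $M_0^2\lesssim \mu^{-(4+n)/\theta}$ as $\mu\to 0^+$. Either branch guarantees dominance, so that $bA\int(u-b)^2$ is absorbed into a fraction of $\mu\int(u-b)(u^\theta-b^\theta)$. Combined with the elementary two-sided estimate $\Phi(u):=u-b-b\ln(u/b)\leq c(M,b)(u-b)^2$ (valid since $u$ is bounded), this reduces the master inequality to $F'(t)+\eta F(t)\leq 0$ for some $\eta>0$, and Gronwall delivers
\[
\|u(\cdot,t)-b\|_{L^2}^2+\|v(\cdot,t)-b\|_{L^2}^2\leq C e^{-\eta t},\qquad t>0.
\]

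Finally I would upgrade the $L^2$-decay to $L^\infty$-decay by the Gagliardo--Nirenberg interpolation
\[
\|f\|_{L^\infty}\leq C\bigl(\|\nabla f\|_{L^{2n}}^{n/(n+1)}\|f\|_{L^2}^{1/(n+1)}+\|f\|_{L^2}\bigr),
\]
applied to $f=u-b$ (using the uniform bound $\|\nabla u\|_{L^{2n}}\leq C_8$ from Lemma \ref{up}) and to $f=v-b$ (using $\|\nabla v\|_{L^\infty}\leq C_6$ from Theorem \ref{GB}, which gives an even simpler interpolation). This produces the rate $e^{-\eta t/(2(n+1))}$ appearing in \eqref{cv-uvw}. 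The convergence for $w$ then follows from the $W^{2,\infty}$-elliptic estimate applied to \eqref{w-2p}, namely $\|w(\cdot,t)\|_{W^{2,\infty}}\leq c\|u(\cdot,t)-\bar u(t)\|_{L^\infty}\leq 2c\|u(\cdot,t)-b\|_{L^\infty}$, transferring the exponential decay of $u$ to that of $w$.

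The main obstacle is the pointwise coercivity step and its interplay with the $\mu$-threshold: the constant $c(\theta,M,b)$ unavoidably couples $M_0$ and $b=(a/\mu)^{1/\theta}$, so one must carefully track how both sides of $c\mu b^{\theta-1}>bA$ depend on $\mu$ and $a$. The two-branch structure of \eqref{mu-large-lt-intro}, with its distinctive exponents $\theta/2$ and $\theta/(6+n)$, is precisely what is needed to cover the moderate-$\mu$ and the small-$\mu$ regimes of \eqref{v-infty-bdd} simultaneously; verifying this absorption without losing the exponential factor constitutes the technical heart of the proof.
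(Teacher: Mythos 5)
Your overall architecture coincides with the paper's: the same Lyapunov functional $\int_\Omega(u-b-b\ln\frac{u}{b})+\frac{b\chi^2}{2d}\int_\Omega(v-b)^2$, the same cancellation of $\frac{b\chi^2}{2}\int|\nabla v|^2$ after rescaling \eqref{l2v-dev-p}, and the same $L^2\to L^\infty$ upgrade via Gagliardo--Nirenberg and the elliptic estimate for $w$. However, two steps in your execution have genuine gaps. First, your coercivity step $(u-b)(u^\theta-b^\theta)\geq c(\theta,M,b)(u-b)^2$ obtained by case-splitting produces a constant that is in general strictly smaller than the sharp value $b^{\theta-1}$ (e.g.\ your mean-value bound gives only $\theta(b/2)^{\theta-1}$ on $|u-b|\le b/2$, and the outer region degrades the constant further for $\theta>1$). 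The hypothesis \eqref{mu-large-lt-intro} is calibrated, through \eqref{lambda-def}, to the \emph{sharp} constant: the paper uses the one-line convexity/monotonicity fact
\begin{equation*}
\inf_{z>0}\frac{z^\theta-b^\theta}{z-b}=b^{\theta-1}\quad(\theta\geq1),
\end{equation*}
valid for all $z>0$ with no reference to $M$, so that \eqref{mu-large-lt-intro} is exactly equivalent to the absorption condition \eqref{mu-large-lt}. With your weaker constant the stated hypothesis does not guarantee absorption, so you would only prove the lemma under a strictly stronger largeness condition on $\mu$.

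Second, and more seriously, the comparison $u-b-b\ln\frac{u}{b}\leq c(M,b)(u-b)^2$ is \emph{false} under the only information you invoke ($0<u\leq M$): as $u\to0^+$ the left side blows up like $-b\ln u$ while the right side stays bounded, and no uniform positive lower bound for $u$ is available anywhere in the paper. Without this comparison the dissipation term $\int_\Omega(u-b)^2$ cannot dominate the functional, and the ODI $F'+\eta F\leq0$ does not close. The paper's proof inserts an indispensable intermediate stage that you skip: it first integrates the master inequality to get the space-time bound \eqref{u-b-con}, deduces the qualitative convergence $\|u(\cdot,t)-b\|_{L^\infty}\to0$ by the uniform-continuity argument of Lemma \ref{K*}, and only \emph{then} — for $t\geq t_2$, when $u$ is uniformly close to $b$ and hence bounded below — establishes the two-sided comparison \eqref{cv-v2} that turns the differential inequality into exponential decay. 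You need this (or an independent positive lower bound for $u$) to make your Gronwall step legitimate; the remainder of your argument (interpolation for $u$ and $v$, transfer to $w$) then goes through as in the paper.
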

\begin{proof}
By the estimate for $M_0$ in \eqref{b-v1} or \eqref{v-infty-bdd}, we first infer that
$$
M_0^2\leq O(1)\left(1+\left(\frac{1}{\mu}\right)^\frac{4+n}{\theta}\right),
$$
and then, we  find that our assumption on $\mu$ in \eqref{mu-large-lt-intro} along with \eqref{lambda-def}  implies
\begin{equation}\label{mu-large-lt}
\mu>\frac{\left[ \left(1+\frac{C^2_p\xi_2^2M_0^2}{d}\right)\frac{\chi^2}{d}+ C_p^2\xi_1^2\right]}{2b^{\theta-2}}\Longleftrightarrow\mu>\left(\frac{ d\chi^2 +d^2C^2_p\xi_1^2+ C^2_p\xi_2^2M_0^2}{2d^2a^\frac{\theta-2}{\theta}}\right)^\frac{\theta}{2}.
\end{equation}
Now, multiplying \eqref{l2v-dev-p} by  $\frac{b\chi^2}{2d}$ and adding the resulting inequality to \eqref{lg-u1}, we obtain
\begin{equation}\label{ulnu-v-con}
\begin{split}
&\frac{d}{dt}\left(\int_\Omega (u-b-b\ln \frac{u}{b})+\frac{b\chi^2}{2d}\int_\Omega(v-b)^2\right) +\frac{b\chi^2}{2d}\int_\Omega(v-b)^2\\
&\leq -\mu\int_\Omega (u-b)(u^\theta-b^\theta)+ \frac{b}{2}\left[ \left(1+\frac{C^2_p\xi_2^2M_0^2}{d}\right)\frac{\chi^2}{d}+ C^2_p\xi_1^2\right]\int_\Omega (u-b)^2.
\end{split}
\end{equation}
Notice, since $\theta\geq 1$, by examining monotonicity, one easily sees that
$$
\sup_{z\in(0, b)\cup (b,\infty)}\frac{(z-b)^2}{(z-b)(z^\theta-b^\theta)}=\frac{1}{b^{\theta-1}}\Longleftrightarrow -(z-b)(z^\theta-b^\theta)\leq -b^{\theta-1}(z-b)^2,  \  \ \forall z>0.
$$
Applying this inequality to \eqref{ulnu-v-con}, we immediately arrive at
\begin{equation}\label{ulnu-v-con2}
\begin{split}
&\frac{d}{dt}\left(\int_\Omega (u-b-b\ln \frac{u}{b})+ \frac{b\chi^2}{2d} \int_\Omega(v-b)^2\right)+ \frac{b\chi^2}{2d} \int_\Omega(v-b)^2\\
&\ \ +  b^{\theta-1}\left[\mu -\frac{\left[ \left(1+\frac{C_p^2\xi_2^2M_0^2}{d}\right)\frac{\chi^2}{d}+ C_p^2\xi_1^2\right]}{2b^{\theta-2}}\right] \int_\Omega (u-b)^2\leq 0.
\end{split}
\end{equation}
We first integrate \eqref{ulnu-v-con2}  from $1$ to $t$ and then we use our derived  condition  \eqref{mu-large-lt} and  the elementary algebraic  fact that $z-b-b\ln\frac{z}{b}\geq 0$ for $z>0$  to obtain that
\begin{equation}\label{u-b-con}
\int_1^t\int_\Omega (u-b)^2<c_{81}, \ \ \forall t\geq 1.
\end{equation}
Given \eqref{u-b-con}, using the same arguments as in Lemma \ref{K*} for $u$, one can  easily show that
\begin{equation}\label{cv-u8}
\|u(\cdot,t)-b\|_{L^\infty} \to 0\  \ \mathrm{as} \ t\to\infty.
 \end{equation}
Next, observing  the  following elementary fact that
$$
\lim_{z\rightarrow b}\frac{(z-b)^2}{z-b-b\ln \frac{z}{b}}=2b,
$$
we  infer from \eqref{cv-u8}  there exists $t_2>0$ such that
\begin{equation}\label{cv-v2}
b\left[u(\cdot, t)-b-b\ln \frac{u(\cdot, t)}{b}\right]\leq \left[u(\cdot, t)-b\right]^2\leq 3b\left[u(\cdot, t)-b-b\ln \frac{u(\cdot, t)}{b}\right], \  \forall t\geq t_2.
\end{equation}
Substituting \eqref{cv-v2} into \eqref{ulnu-v-con2}, we end up with an important ODI:
\begin{equation}\label{ulnu-v-con3}
\begin{split}
&\frac{d}{dt}\left(\int_\Omega (u-b-b\ln \frac{u}{b})+ \frac{b\chi^2}{2d}\int_\Omega(v-b)^2\right)\\
&\ \ +\sigma \left(\int_\Omega (u-b-b\ln \frac{u}{b})+ \frac{b\chi^2}{2d}\int_\Omega(v-b)^2\right)\leq 0, \ \ \ \forall t\geq t_2,
\end{split}
\end{equation}
where, due to \eqref{mu-large-lt},
$$
\sigma:=\min\left\{1, \ \ \   b^\theta \left[\mu -\frac{\left[ \left(1+\frac{C^2_p\xi_2^2M_0^2}{d}\right)\frac{\chi^2}{d}+ C^2_p\xi_1^2\right]}{2b^{\theta-2}}\right]\right\}>0.
$$
Solving the ODI \eqref{ulnu-v-con3}, we simply get the following exponential decay estimate:
\begin{equation*}
\int_\Omega (u-b-b\ln \frac{u}{b})+ \int_\Omega(v-b)^2\leq c_{82} e^{-\sigma (t-t_2)}, \ \ \ \forall t\geq t_2,
\end{equation*}
which combined with \eqref{cv-v2} gives
\begin{equation}\label{cv-v5*}
\|u(\cdot,t)-b\|_{L^2}+\|v(\cdot,t)-b\|_{L^2}\leq c_{83}e^{-\frac{\sigma}{2} (t-t_2)}, \ \ \ \forall t\geq t_2.
\end{equation}
Now, it is easy to improve this $L^2$-convergence to $L^\infty$-convergence as in  Lemma \ref{K*}. Indeed, by the  boundedness of $\|\nabla u\|_{L^{2n}}$ in \eqref{up-1} and the GN inequality,  \eqref{cv-v5*} entails
\begin{equation} \label{u-conv-fin}
\begin{split}
    \|u(\cdot,t)-b\|_{L^\infty}&\leq c_{84}\left(\|\nabla u(\cdot,t)\|^{\frac{n}{n+1}}_{L^{2n}}\|u(\cdot,t)-b\|^{\frac{1}{n+1}}_{L^2}
    +\|u(\cdot,t)-b\|_{L^2}\right)\\
    &\leq c_{85}e^{-\frac{\sigma}{2(n+1)}t} , \ \ \ \forall t\geq t_2.
    \end{split}
\end{equation}
Similarly, using the $W^{1,\infty}$-boundedness of $v$ guaranteed by Theorem \ref{GB}, we use the GN inequality to improve the $L^2$-convergence of $v$ in \eqref{cv-v5*} to its $L^\infty$-convergence as follows:
\begin{equation} \label{v-conv-fin}
\begin{split}
    \|v(\cdot,t)-b\|_{L^\infty}\leq c_{86}\| v(\cdot,t)\|_{W^{1,\infty}}^\frac{n}{n+2}\|v(\cdot,t)-b\|_{L^2}^\frac{2}{n+2}
    &\leq c_{87}e^{-\frac{\sigma}{n+2}(t-t_2)} , \ \ \ \forall t\geq t_2.
    \end{split}
\end{equation}
As for the $W^{2,\infty}$-convergence of $w$, applying  the  $W^{2,\infty}$-estimate  to \eqref{w-2p} and using \eqref{u-conv-fin}, we see that $w$ decays to zero exponentially in $W^{2,\infty}$-norm:
\begin{equation} \label{w-conv-fin}
\begin{split}
\|w(\cdot, t)\|_{W^{2,\infty}}&\leq c_{88}\|\Delta w\|_{L^\infty}\leq c_{88}\left(\|u(\cdot, t)-b\|_{L^\infty}+|b-\bar{u}|\right)\\
&\leq 2c_{88}\|u(\cdot, t)-b\|_{L^\infty}\leq c_{89}e^{-\frac{\sigma}{2(n+1)}t} , \ \ \ \forall t\geq t_2.
    \end{split}
\end{equation}
Our desired  exponential convergence \eqref{cv-uvw} follows from \eqref{u-conv-fin}, \eqref{v-conv-fin} and \eqref{w-conv-fin}.
\end{proof}

\begin{proof}[{\bf Proof of Theorem \ref{LT}}]
 The respective of convergence and exponential convergence in (C1) and (C2) of Theorem \ref{LT} have been fully contained in Lemmas \ref{K*}, \ref{vc}, and \ref{cv-u}.
\end{proof}

\bigbreak
\noindent \textbf{Acknowledgment.}
 H. Y. Jin was supported by the Guangdong Basic and Applied Basic Research Foundation (No.
2020A1515010140and 2020B1515310015),Guangzhou Science and Technology Pro-
gram (No. 202002030363), NSF of China (No. 11871226 and No.12026608), and the
Fundamental Research Funds for the Central Universities.   T. Xiang  was  funded by the NSF of China (No. 12071476  and 11871226) and  the Research Funds  of Renmin University of China (No. 2018030199).

\bigbreak
\bigbreak

\end{document}